\tikzset{arrow data/.style 2 args={%
      decoration={%
         markings,
         mark=at position #1 with \arrow{#2}},
         postaction=decorate}
      }%
\definecolor{light}{gray}{.10}
\DeclareMathAlphabet{\mathpzc}{OT1}{pzc}{m}{it}
\newcounter{savecntr}
\newcommand{\mbf}[1]{\mathbf{#1}}
\newcommand{\ve}{\varepsilon}
\DeclareMathOperator*{\argmin}{arg\,min}
\def\Hess{{\rm Hess\,}}
\def\and {{\rm \; and \;}}
\newcommand{\ft}[1]{\mathsf{#1}} 
\newcommand {\pa}{\partial}
\newtheorem{theorem}{Theorem}
\newtheorem{proposition}{Proposition}
\newtheorem{definition}[proposition]{Definition}
\newtheorem{corollary}[proposition]{Corollary}
\newtheorem{remark}[proposition]{Remark}
\title{ \textbf{\Large{The exit from a metastable state: concentration of  the exit point distribution on the low energy saddle points, part 2}}}
\author{Tony
  Leli\`evre\setcounter{savecntr}{\value{footnote}}\thanks{CERMICS,
    \'Ecole des Ponts, Universit\'e  Paris-Est, INRIA, 77455
    Champs-sur-Marne, France. E-mail:  tony.lelievre@enpc.fr}\,,
  Dorian Le
  Peutrec\setcounter{savecntr}{\value{footnote}}\thanks{Institut Denis Poisson, Universit\'e d'Orl\'eans, Universit\'e de Tours, CNRS, Orl\'eans, France. E-mail: dorian.le-peutrec@univ-orleans.fr}$\, \ $and Boris Nectoux
  \thanks{LMBP, Universit\'e Clermont Auvergne, Aubi\`ere, France E-mail: boris.nectoux@uca.fr}}
 \date{}
\begin{document} 

 \maketitle

 \begin{abstract}
We consider     the first exit point distribution from a bounded domain~$\Omega$ of the stochastic process~$(X_t)_{t\ge 0}$ solution to the overdamped Langevin dynamics
$$d X_t = -\nabla f(X_t) d t + \sqrt{h} \ d B_t$$ 
starting from  deterministic initial conditions in~$\Omega$, under
rather general assumptions on $f$  (for instance, $f$ may have several
critical points in $\Omega$). This work is a continuation of the
previous paper~\cite{DLLN-saddle1} where the exit point distribution
from $\Omega$ is studied when~$X_0$ is initially distributed according
to the quasi-stationary distribution of $(X_t)_{t\ge 0}$ in~$\Omega$. The proofs
are based on analytical results on the dependency of the exit point
distribution on
the initial condition, large deviation techniques and results on the genericity of Morse functions.


\end{abstract}


\section{Introduction and main results}\label{sec:intro}

The aim of this article is to extend the results
of~\cite{DLLN-saddle1} on the concentration of the first exit point
distribution from a domain to general initial conditions within the
domain. For the sake of consistency, we first recall in
Section~\ref{sec:exit} the motivation for such a study, some related
works in the literature, and an informal presentation of our
results. Section~\ref{nota-hypo} then gives precise statements of
the results we prove.

 \subsection{Motivation and informal presentation of the results}\label{sec:exit}
  We are interested in the overdamped Langevin dynamics
\begin{equation}\label{eq.langevin}
d X_t = -\nabla f(X_t) d t + \sqrt{h} \ d B_t,
\end{equation}
where $X_t\in \mathbb R^d$ is a vector in $\mathbb R^d$,~$f: \mathbb R^d \to 
\mathbb R$ is a $C^\infty$ function, $h$ is a positive parameter and 
$(B_t)_{t\geq 0}$ is a standard $d$-dimensional Brownian motion. The
process~\eqref{eq.langevin} can be used to model the evolution of
molecular systems, for example. In this case,  $f$ is the potential function and $h>0$ is proportional to the temperature  of the system.  
Let us consider a domain~$\Omega \subset \mathbb R^d$ and the associated exit
event from $\Omega$. More precisely, let us introduce
\begin{equation}\label{eq.tau}
\tau_{\Omega}=\inf \{ t\geq 0 | X_t \notin \Omega      \}
\end{equation}
the first exit time from $\Omega$.  We are interested in the limit of
the law of $X_{\tau_{\Omega}}$ when $h \to 0$. Under some assumptions which will be made precise below, this law
concentrates on a subset of $\pa \Omega$, concentration being defined in the following sense. 
\begin{definition}\label{de.concentration}
Let $\mathcal Y\subset \pa \Omega$ and let us consider a family of
random variable $(Y_h)_{h >0}$ which admits a limit in distribution
when $h \to 0$.  The law of $Y_h$ concentrates on~$\mathcal Y$  in the limit $h\to 0$   if for every  neighborhood $\mathcal  V_{\mathcal Y}$ of ${\mathcal Y}$ in~$\pa \Omega$    $$\lim \limits_{h\to 0}\mathbb P  \left [ Y_h \in \mathcal  V_{\mathcal Y}\right]=1,$$ 
  and if for all $x\in \mathcal Y$ and for all neighborhood $\mathcal  V_x$ of $x$ in~$\pa \Omega$ 
  $$\lim \limits_{h\to 0}\mathbb P  \left [ Y_h \in \mathcal  V_x\right]>0.$$
  \end{definition} 
  \noindent
 In other words, the law of $Y_h$ concentrates on~$\mathcal Y$  if  $\mathcal Y$ is the support of the law of $Y_h$ in the limit $h\to 0$. 
 
 In this work, we investigate the concentration of the law of
  $X_{\tau_{\Omega}}$  on a subset of $\pa \Omega$ when $X_0=x\in
  \Omega$,  under general assumptions on the function $f:\overline
  \Omega\to \mathbb R$.   This is of practical interest in order to
  predict   where   the process~\eqref{eq.langevin} is more likely to
  leave $\Omega$ in the zero-noise limit. The study of the exit
  event in the small temperature regime has interesting theoretical
  and numerical counterparts, to relate continuous state space
  dynamics such as~\eqref{eq.langevin} to discrete state space
  dynamics (jump Markov model), and to accelerate the sampling of
  metastable trajectories, see~\cite{di-gesu-lelievre-le-peutrec-nectoux-17,Lelievrehandbook}.

 \medskip

\noindent
\textbf{Review of the literature.} Let us mention the main results
from the mathematical literature on the exit problem related to our
problem. We refer to~\cite{Day} for a more comprehensive review. 

The concentration of the  law of
$X_{\tau_\Omega}$ on  $\argmin_{\pa \Omega}f$ in the small temperature
regime ($h\to 0$) has been studied  in~\cite{MS77,schuss90, MaSc}
through formal computations, see also~\cite{MaSt}. Many of these results have been rigorously
proven either by studying the underlying partial differential
equations, or  by using large deviations techniques. In particular, when it holds
\begin{equation}\label{eq.prev-res1}
\partial_nf>0 \text{ on } \partial \Omega,
\end{equation}
where $\partial_nf$ is the normal derivative of $f$ on $\pa \Omega$ ($n$ is the unit outward normal vector to $D$), and 
\begin{equation}\label{eq.prev-res2}
\{x\in \Omega, \vert \nabla f(x)\vert =0\}=\{x_0\} \text{ with } f(x_0)=\min_{\overline \Omega} f \text{ and } \text{det Hess} f(x_0)>0,
 \end{equation}
  and $f$ attains its minimum on $\pa \Omega$ at one single point $y_0$, it is proved in~\cite[Theorem~2.1]{FrWe} that the law of~$X_{\tau_{\Omega}}$  concentrates on $y_0$ in the limit $h\to 0$,  when $X_0=x\in \Omega$.  This result has been extended in~\cite{kamin1979elliptic,Kam,Per,day1984a,day1987r}   when only~\eqref{eq.prev-res1} and~\eqref{eq.prev-res2} are satisfied:  it is proved there   that  the    the law of~$X_{\tau_{\Omega}}$  concentrates in this case on  $\argmin_{\pa \Omega}f$ in the limit
 $h\to 0$,  when $X_0=x\in \Omega$. 
 
In \cite[Theorem 5.1]{FrWe}, for $ \Sigma\subset \pa \Omega$,   the limit of $h\ln \mathbb P  \left [ X_{\tau_{\Omega}} \in \Sigma\right]$ when $h\to 0$ is related   to a minimization problem  involving  the quasipotential  of the process~\eqref{eq.langevin}. 
 Let us mention two limitations of~\cite[Theorem~5.1]{FrWe}. First,
 this theorem requires to be able to compute the quasipotential in
 order to get useful information: this is trivial under the
 assumptions~\eqref{eq.prev-res1} and~\eqref{eq.prev-res2} but more
 complicated under more general assumptions on $f$ (in particular when
 $f$ has several critical points in $\Omega$). Second, even when the
 quasipotential is analytically known, this result only gives the
 subset of $\partial \Omega$ where exit will not occur on an
 exponential scale in the limit $h\to 0$. It does not allow to exclude
 exit points with probability which goes to zero polynomially in $h$
 (this may indeed occur, see Remark~\ref{re.discussionHP} below and
 examples in~\cite[Section 1.4]{DLLN-saddle0}), and, when the exit point distribution
 concentrates on more than one point, it
 does not give the relative probabilities to exit through the various exit
 points in the limit $h\to 0$.
However, let us emphasize that the strength
of large deviation theory is that it is very general: $f$ may have
several local minima in $\Omega$, and the theory actually applies to
a much wider class of dynamics (in particular for non-gradient drift
and non-isotropic noise) and in a broader geometric setting~\cite{olivieri2005large,FrWe}. See for
example~\cite{BoRe,landim2017dirichlet,landim2019metastable} for   recent
contributions to the non reversible case. Other references where the
exit problem appears as an intermediate tool to study spectral
properties of the inifinitesimal generator are~\cite{BEGK,BGK,Mat,miclo-95}.

\medskip
 
\noindent
\textbf{Objective of this work. }
 Our work  aims at generalizing in the reversible case  the
 results of~\cite[Theorem 2.1]{FrWe} and~\cite{kamin1979elliptic,Kam,Per,day1984a,day1987r}, when $f$ has several critical points in
 $\Omega$. First, we exhibit a general set of assumptions on
 $f$ and an ensemble of initial conditions for which  the law of~$X_{\tau_\Omega}$ concentrates on
 points belonging to $\argmin_{\pa \Omega}f$ and we compute the
 relative probabilities to leave the domain through each of them (see
 Theorem~\ref{thm.main}): in this case, the limiting exit point
 distribution is the same as starting from the quasi-stationary
 distribution in $\Omega$, and we thus rely on our previous
 work~\cite{DLLN-saddle1}.
 Second, using this first result, we identify the exit points when the
 process starts more generally from initial conditions contained in a potential
 well which touches the boundary of $\Omega$, in a sense to be made precise (see Theorem~\ref{thm.2}
 and Theorem~\ref{thm.4}).

Concerning the assumptions on $f$,  $\partial_nf$ is not assumed to be
positive on~$\partial \Omega$ and $f$ may have several critical points
in~$\Omega$ with  larger energies than  $\min_{\partial
  \Omega}f$. However, we do not consider the case when~$f$  has
critical  points on~$\partial \Omega$. 

Here are representative examples of outputs of this work, which are
new to the best of our knowledge. Let us assume that~$f:\overline
\Omega\to \mathbb R$ and its restriction~$f|_{\pa \Omega}$  to the
boundary of~$\Omega$ are smooth Morse functions, and that~$f$ has no
critical point on $\pa \Omega$  (see~\eqref{H-M} below).
\begin{itemize}
\item We
prove that if   $\{y\in \Omega,
  \, f(y)<\min_{\pa \Omega}f\}$ is connected and contains all the 
local minima  of~$f$  in~$\Omega$, and if   $\pa_nf>0$ on
  $\argmin_{\pa \Omega}f$, then the 
law of~$X_{\tau_{\Omega}}$ concentrates on $\argmin_{\partial 
\Omega} f$ when $X_0 =x\in \{y\in \Omega,
  \, f(y)<\min_{\pa \Omega}f\}$ (see~Theorem~\ref{thm.main}).
  \item   Besides, when $\{y\in \Omega,
  \, f(y)<\min_{\pa \Omega}f\}$ is not connected (we denote in this case by   $\ft C_1,\ldots,\ft C_{\ft N}$ its connected components, with $\ft N\ge 2$) and if, for some $j\in \{1,\ldots,\ft N\}$, $\pa \ft C_j\cap \pa \Omega\neq \emptyset$ and 
  $\vert \nabla f\vert \neq 0$ on  $\pa \ft C_j$,     
   then, when $X_0 =x\in \ft C_j$,  the 
law of~$X_{\tau_{\Omega}}$ concentrates on $\pa \ft C_j\cap \pa \Omega
$ in the limit $h\to 0$ (see~Theorem~\ref{thm.2}). 
\item Furthermore,  when $X_0 =x\in \ft C_j$ and $z\in \argmin_{\pa \Omega} f\setminus \pa \ft C_j$, 
 for all sufficiently small neighborhood  $ \Sigma_z$ of $z$ in $\pa
 \Omega$, $\mathbb P  \left [
   X_{\tau_{\Omega}} \in \Sigma_z\right]=O(e^{-\frac ch})$ for $h$
 small enough (see item 1 in~Theorem~\ref{thm.2}).
\end{itemize}

  
  Let us mention that the preprint~\cite{DLLN-saddle0} concatenates most of the results of
  this manuscript and~\cite{DLLN-saddle1}. A simplified version of the
  results of this work is also presented in~\cite{IHPLLN}.  

\medskip 

\noindent 
\textbf{On the results from~\cite{DLLN-saddle1}: metastability and the
  quasi-stationary distribution.  } As explained above, this article
generalizes to a broader class of initial conditions in $\Omega$ the results
of~\cite{DLLN-saddle1} where it is assumed that $X_0$ is distributed
according to  the quasi-stationary distribution in $\Omega$.  Let us quickly
recall what is the quasi-stationary distribution, and why it is
relevant to study the exit event starting from this distribution.

Let us assume  that~$\Omega
\subset \mathbb R^d$ is smooth, open, bounded and connected.
\begin{definition}\label{def:QSD}   A
quasi-stationary distribution for the dynamics~\eqref{eq.langevin} in
$\Omega$ is a probability measure $\nu_h$ supported in~$\Omega$ such that for all measurable sets $A\subset \Omega$ and for all $t\ge 0$\label{page.qsd}
\begin{equation}\label{eq.vhA} \nu_h(A)=\frac{\displaystyle\int_{\Omega} \mathbb P_x \left[X_t \in A,   t<\tau_{\Omega}\right]  \nu_h(dx ) }{\displaystyle \int_{\Omega}   \mathbb P_x \left[t<\tau_{\Omega} \right] \nu_h(dx )}.
\end{equation}
\end{definition}
Here and in the following, the subscript $x$ indicates that the
stochastic process starts from $x \in \mathbb R^d$ ($X_0=x$). 
 In words,~\eqref{eq.vhA} means that if
$X_0$ is distributed according to  $\nu_h$, then for all $ t>0$,  $X_t$
is still distributed according to  $\nu_h$ conditionally on 
$X_s \in \Omega$ for all $s \in [0,t]$.

The quasi-stationary distribution is related to an eigenvalue problem
on the infinitesimal generator of the dynamics (\ref{eq.langevin}), namely the differential operator
\begin{equation}\label{eq.L-0}
L_{f,h}= - \frac{h}{2}  \ \Delta+ \nabla f \cdot \nabla.
\end{equation}
Let us  define 
$$L^2_w(\Omega)=\Big\{u:\Omega \to \mathbb R 
,  \int_\Omega u^2 
 e^{-\frac{2}{h} f } \,   < \infty\Big\}.$$
The weighted Sobolev spaces $H^k_w(\Omega)$ are defined
similarly.  The operator $L_{f,h}$ with homogeneous Dirichlet boundary
conditions on $\partial \Omega$ is denoted by $L^{D}_{f,h}$. Its
domain is $\mathcal D(L^{D}_{f,h})=H^1_{w,0}(\Omega)\cap
H^2_w(\Omega)$, 
where $H^1_{w,0}(\Omega)=\{u \in H^1_w(\Omega), \, u=0 \text{ on
} \partial \Omega\}$. It is well know that $-L^{D}_{f,h}$ is self
adjoint on $L^2_w(\Omega)$, positive and has compact resolvent. Moreover, from standard results on
 elliptic operator (see for example~\cite{MR1814364, Eva}), its  smallest eigenvalue
  $\lambda_h$  (a.k.a. the principal eigenvalue)  
is non degenerate and its associated eigenfunction $u_h$
 has a sign on~$\Omega$ and is in $C^{\infty} (\overline \Omega)$. %
 Without loss of generality, one can then assume that:  
 \begin{equation} \label{eq.norma}
 u_{h}>0\ \text{on}\ \Omega\ \ \text{and}\ \ \int_{\Omega}u_{h}^{2} e^{-\frac 2hf} =1.
\end{equation}
The following result (see for example \cite{le2012mathematical})
relates the quasi-stationary distribution $\nu_h$ to the principal
eigenfunction $u_h$.
\begin{proposition} \label{uniqueQSD}
The unique quasi-stationary distribution $\nu_h$ associated with the
dynamics~\eqref{eq.langevin} and the domain~$\Omega$ is given by:
\begin{equation} \label{eq.expQSD}
\nu_h(dx)=\frac{\displaystyle  u_h(x) e^{-\frac{2}{h}  f(x)}}{\displaystyle \int_\Omega u_h(y) e^{-\frac{2}{h}  f(y)}dy}\,  dx.
\end{equation}
Moreover, whatever the law of the
initial condition $X_0$ with support in~$\Omega$, it holds:
\begin{equation}\label{eq.cv_qsd}
\lim_{t \to \infty} \| {\rm Law}(X_t| t < \tau_\Omega) - \nu_h \|_{TV} = 0.
\end{equation}
\end{proposition}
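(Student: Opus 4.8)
The plan is to identify the semigroup of \eqref{eq.langevin} killed at $\partial\Omega$ with the semigroup generated by $-L^{D}_{f,h}$ and to exploit the self-adjointness of $-L^{D}_{f,h}$ on the weighted space $L^2_w(\Omega)$ (with inner product $\langle u,v\rangle_w=\int_\Omega uv\,e^{-\frac{2}{h}f}$), i.e. to use
\begin{equation*}
\mathbb{E}_x\!\big[\phi(X_t)\,\mathbf{1}_{t<\tau_\Omega}\big]=\big(e^{-tL^{D}_{f,h}}\phi\big)(x),\qquad x\in\Omega,\ \phi\in L^2_w(\Omega),\ t\ge0,
\end{equation*}
together with $e^{-tL^{D}_{f,h}}u_h=e^{-t\lambda_h}u_h$. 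First I would check that the measure $\nu_h$ of \eqref{eq.expQSD} satisfies \eqref{eq.vhA}: for a bounded measurable $\phi$, self-adjointness of the killed semigroup on $L^2_w(\Omega)$ gives
\begin{equation*}
\int_\Omega\mathbb{E}_x\!\big[\phi(X_t)\mathbf{1}_{t<\tau_\Omega}\big]\,u_h(x)\,e^{-\frac{2}{h}f(x)}\,dx=\int_\Omega\phi(x)\,\big(e^{-tL^{D}_{f,h}}u_h\big)(x)\,e^{-\frac{2}{h}f(x)}\,dx=e^{-t\lambda_h}\!\int_\Omega\phi\,u_h\,e^{-\frac{2}{h}f}\,,
\end{equation*}
so that $\mathbb{P}_{\nu_h}[X_t\in A,\,t<\tau_\Omega]=e^{-t\lambda_h}\nu_h(A)$ (take $\phi=\mathbf{1}_A$) and $\mathbb{P}_{\nu_h}[t<\tau_\Omega]=e^{-t\lambda_h}$ (take $A=\Omega$); dividing the two identities gives \eqref{eq.vhA}, hence $\nu_h$ is a quasi-stationary distribution.

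Next I would prove uniqueness. If $\mu$ is any quasi-stationary distribution, then combining \eqref{eq.vhA} with the Markov property shows that $t\mapsto\mathbb{P}_\mu[t<\tau_\Omega]$ is multiplicative, hence equal to $e^{-\lambda t}$ for some $\lambda\ge0$, and \eqref{eq.vhA} then reads $(e^{-tL^{D}_{f,h}})^{*}\mu=e^{-\lambda t}\mu$ as an identity between measures. Since the Dirichlet heat kernel is smooth and positive on $\Omega$ for every $t>0$, it follows that $\mu=g\,e^{-\frac{2}{h}f}\,dx$ with $g\in C^\infty(\overline\Omega)$, $g\ge0$, $g\not\equiv0$, and that $g$ is an eigenfunction of $L^{D}_{f,h}$ associated with $\lambda$. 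Because $\lambda_h$ is the non-degenerate principal eigenvalue of $-L^{D}_{f,h}$ with $u_h>0$, while any eigenfunction for a different eigenvalue is $\langle\cdot,\cdot\rangle_w$-orthogonal to $u_h$ and hence sign-changing, the nonnegativity of $g$ forces $\lambda=\lambda_h$ and $g\in\mathbb{R}\,u_h$; after normalization, $\mu=\nu_h$.

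Finally, for \eqref{eq.cv_qsd}, let $\mu_0$ be a probability measure supported in $\Omega$. Using the smoothing property of the killed semigroup, the conditional law of $X_{t_0}$ given $\{t_0<\tau_\Omega\}$ has, for any fixed $t_0>0$, a density $\rho_0\in L^2_w(\Omega)$ with $\rho_0\ge0$, and the Markov property at time $t_0$ reduces the problem to $\mu_0$ having such a density. Writing $(\lambda_k)_{k\ge1}$ for the eigenvalues of $L^{D}_{f,h}$ (with $\lambda_1=\lambda_h<\lambda_2\le\cdots$ by non-degeneracy of $\lambda_h$), $(u_k)_{k\ge1}$ for an associated $\langle\cdot,\cdot\rangle_w$-orthonormal basis of eigenfunctions ($u_1=u_h$), and $c_k=\langle\rho_0,u_k\rangle_w$ (so $c_1>0$, since $u_h>0$ on $\Omega$), the expansion $e^{-tL^{D}_{f,h}}\rho_0=\sum_{k\ge1}c_k e^{-\lambda_k t}u_k$ shows that $\mathrm{Law}(X_t\mid t<\tau_\Omega)$ has density with respect to $e^{-\frac{2}{h}f}\,dy$ given by
\begin{equation*}
\frac{\sum_{k\ge1}c_k e^{-\lambda_k t}\,u_k(y)}{\sum_{k\ge1}c_k e^{-\lambda_k t}\,\langle u_k,1\rangle_w}\,,\qquad \langle u_k,1\rangle_w=\int_\Omega u_k\,e^{-\frac{2}{h}f}\,,\quad \langle u_1,1\rangle_w>0\,.
\end{equation*}
Factoring $e^{-\lambda_1 t}$ out of numerator and denominator and bounding the tails in $L^2_w(\Omega)$ via $\sum_{k\ge2}c_k^2 e^{-2\lambda_k t}\le e^{-2(\lambda_2-\lambda_1)t}\sum_{k\ge2}c_k^2$, where the spectral gap $\lambda_2-\lambda_1>0$ is essential, this density converges to $u_h(y)/\langle u_h,1\rangle_w$ in $L^1(\Omega)$ at exponential rate $e^{-(\lambda_2-\lambda_1)t}$, which gives \eqref{eq.cv_qsd}.

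The steps I expect to be most delicate are the reduction of a general, possibly singular, initial law to an $L^2_w(\Omega)$ density through the smoothing of the killed semigroup, and the justification that the eigenfunction series of $e^{-tL^{D}_{f,h}}$ converges with tails controllable in $L^2_w(\Omega)$; both rely on $e^{-tL^{D}_{f,h}}$ being trace class for every $t>0$, a consequence of the compactness of the resolvent of $-L^{D}_{f,h}$ recalled above. The duality computation on measures, the multiplicativity of the survival probability, and the Perron--Frobenius-type sign argument in the uniqueness step are otherwise routine.
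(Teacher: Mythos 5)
Your argument is correct, and it is essentially the standard spectral/Perron--Frobenius proof for reversible diffusions. There is, however, no in-paper proof to compare against: the paper states Proposition~\ref{uniqueQSD} and immediately refers to~\cite{le2012mathematical} for it, so you are supplying the argument from scratch. The structure you use — (i) verify~\eqref{eq.vhA} by pairing the killed semigroup against $u_h$ and using self-adjointness on $L^2_w(\Omega)$ together with $e^{-tL^D_{f,h}}u_h=e^{-t\lambda_h}u_h$; (ii) show that any QSD makes the survival probability multiplicative, hence exponential, then use the strict positivity and smoothing of the Dirichlet kernel to turn the QSD density into an eigenfunction, and use orthogonality to $u_h>0$ to pin down the eigenvalue as $\lambda_h$; (iii) use the spectral gap and the eigenfunction expansion of $e^{-tL^D_{f,h}}\rho_0$ (after first pushing an arbitrary initial law through the semigroup at a small time $t_0>0$ to land in $L^2_w$) and an $L^2_w\hookrightarrow L^1$ bound to get total-variation convergence — is precisely what the cited reference does. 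Two cosmetic remarks: the paper's surrounding text says that ``$-L^D_{f,h}$'' is the positive self-adjoint operator with smallest eigenvalue $\lambda_h$, whereas in fact $\langle L^D_{f,h}u,u\rangle_w=\frac h2\int_\Omega|\nabla u|^2e^{-2f/h}\ge0$, so it is $L^D_{f,h}$ itself that is nonnegative (this is consistent with how the paper later uses spectral projectors $\pi_{[0,\cdot)}(L^D_{f,h})$); your relation $e^{-tL^D_{f,h}}u_h=e^{-t\lambda_h}u_h$ silently adopts the correct sign. Also, the displayed tail bound $\sum_{k\ge2}c_k^2e^{-2\lambda_kt}\le e^{-2(\lambda_2-\lambda_1)t}\sum_{k\ge2}c_k^2$ is the bound \emph{after} you have already factored $e^{-\lambda_1t}$ out of numerator and denominator; stated on its own the right-hand side should carry an extra $e^{-2\lambda_1t}$. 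Neither affects the validity of the proof.
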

Here,~${\rm Law}(X_t| t < \tau_\Omega)$ denotes the law of $X_t$ conditional to the event $\{t<\tau_\Omega\}$.
For a given initial distribution of the process~\eqref{eq.langevin}, if  the convergence in~\eqref{eq.cv_qsd} is much quicker than the exit from $\Omega$, the exit from the domain~$\Omega$ is said to be metastable. In~\cite{DLLN-saddle1}, we have investigated 
the concentration of the law of~$X_{\tau_{\Omega}}$ on  $\argmin_{\pa
  \Omega}f$ in the limit $h\to 0$  when $X_0\sim \nu_h$, namely when
the exit is metastable. In this work,
we extend this study to the general case: $X_0=x \in \Omega$.

\subsection{Main results}
\label{nota-hypo}
In all this work, we assume that~$\Omega
\subset \mathbb R^d$ is smooth, open, bounded and connected, and that $f : \overline \Omega \to \mathbb R$ is a $C^{\infty}$ function.
This section is dedicated to the statement    of the main result of this work.

\subsubsection{Assumptions}

 Let us now introduce the basic assumption which is used throughout  this work:
%
\begin{equation}
\tag{\textbf{A0}}\label{H-M}
 \left.
    \begin{array}{ll}
        &\text{The function $f : \overline \Omega \to \mathbb R$ is a $C^{\infty}$ Morse function.}\\
       &\text{For all $x\in \pa \Omega$, $\vert \nabla f(x)\vert \neq 0$.}   \\
 &\text{The function $f : \{x \in  \pa \Omega, \pa_n f(x) > 0\} \to \mathbb R$ is  a Morse function.}\\
   &\text{The function~$f$   has  at least one local minimum in  $\Omega$.}
    \end{array}
\right \}
 \end{equation}

\begin{remark}
We recall that a function \label{page.HM} $\phi: \overline \Omega \to \mathbb R$ is a  Morse function if all its  critical points are non degenerate (which implies in particular that $\phi$ has a finite number of critical points since $\overline \Omega$ is compact and a non degenerate critical point is isolated from the other critical points). 
Let us recall that a critical point  $z\in \overline\Omega$ of~$\phi$  is non degenerate if the hessian matrix of~$\phi$ at $z$, denoted by    $\Hess \phi(z)$,  is invertible.
  We refer for example to~\cite[Definition 4.3.5]{Jost:2293721} for a
  definition of the hessian matrix on a manifold (see
  also~\cite[Remark 10]{di-gesu-le-peutrec-lelievre-nectoux-16} for
  explicit formulas).  
 A non degenerate critical point $z\in \overline\Omega$ of~$\phi$ is  said to have index $p\in\{0,\dots,d\}$
if   $\Hess \phi(z)$ has precisely $p$
negative eigenvalues (counted with multiplicity). In the case $p=1$,
$z$  is called a saddle point.
\end{remark}
  \noindent    
In order to introduce the remaining assumptions, we need to introduce
three notations. First, the following notation will be used for the
level sets of $f$:  for $a\in \mathbb R$,
$$\{f<a\}=\{x\in \overline \Omega, \ f(x)<a\},\ \ \{f\le a\}=\{x\in \overline \Omega,\  f(x)\le a\},$$
and\label{page.fa}
$$\{f=a\}=\{x\in \overline \Omega, \ f(x)=a\}.$$
Second, for any local minimum $x$ of~$f$ in~$\Omega$, one defines\label{page.hfx}
\begin{equation}\label{eq.Hfx}  
\ft H_f (x):=\,  \inf   \Big\{  \max_{t\in [0,1]}\,  f\big ( \gamma(t)\big)\, \big | \,  \gamma \in C^0([0,1], \overline \Omega), \,  \gamma(0)=x, \text{ and }  \gamma(1)\in \pa \Omega\Big\},  
\end{equation}
where $C^0([0,1], \overline \Omega)$ is the set of continuous paths
from $[0,1]$ to $\overline \Omega$. 
 Intuitively, $\ft H_f(x)$ is the minimal energy any path connecting
$x$ to the boundary $\pa \Omega$ has to cross. This energy is necessarily either
the energy of a saddle point~$z$ in $\Omega$ (see e.g. $\partial \ft C_2$ on
Figure~\ref{fig:okay}), or the energy of a generalized saddle point~$z$ on $\pa
\Omega$ (see e.g. $\partial \Omega \cap \partial \ft C_3$ and $\partial 
\Omega
\cap \partial  \ft C_{\ft{max}}$ on Figure~\ref{fig:okay}; see also Equation~\eqref{eq.mathcalU1_bis} for 
a proper
definition of a generalized saddle point).
%
%
%
Third, for a local minimum $x$ of $f$ in $\Omega$, 
\begin{equation}\label{eq.Cdef2}
\begin{aligned}
\ft C(x)  \text{ is the connected component of } \{f< \ft H_f(x)\} \text{ containing } x. 
\end{aligned}
\end{equation} 
Finally, set 
\begin{align}
\label{mathcalC-def}
&\mathcal C:=\big \{ \ft C (x), \, x  \text{ is  a local minimum of $f$ in }\Omega \big \}.
\end{align} 
\noindent
Let us mention  that  when \eqref{H-M} holds, one has: for all local
minima $x$ of $f$ in $\Omega$, $\ft C (x)$ is an open subset of $ \Omega$  (see~\cite[Remark 7]{DLLN-saddle0}). 

We are now in position to state the assumptions we will use in addition to~\eqref{H-M}:
\begin{itemize} 
\item
First geometric assumption:
\begin{equation}\tag{\textbf{A1}}\label{eq.hip1}
\text{~\eqref{H-M} holds and } \exists ! \ft C_{\ft{max}}  \in \mathcal C \text{ s.t. } \max\limits_{\ft C\in \mathcal C}  \,  \Big \{  \max_{\overline{\ft C}}f-\min_{\overline{\ft C}} f   \Big\} =  \max_{\overline{\ft C_{\ft{max}} }}f-\min_{\overline{\ft C_{\ft{max}} }} f.
\end{equation}

\item    Second geometric assumption:
\label{page.hypo}
\begin{equation}\tag{\textbf{A2}} \label{eq.hip2}
\text{ \eqref{eq.hip1} holds and } \pa \ft C_{\ft{max}} \cap \pa \Omega\neq \emptyset.
\end{equation}
\item    
Third geometric assumption:
  \begin{equation}\tag{\textbf{A3}} \label{eq.hip3}
\text{ \eqref{eq.hip1} holds and } \pa  \ft C_{\ft{max}} \cap \pa \Omega\subset \argmin_{\pa \Omega} f.
\end{equation}  
 \end{itemize} 
  Assumption~\eqref{eq.hip1}  implies that there is   a unique  deepest well, namely $\ft C_{\ft{max}}$. 
  Assumptions~\eqref{eq.hip2} and~\eqref{eq.hip3} mean that the closure of this deepest well intersects $\pa \Omega$ at  points where $f$ reaches its minimum on $\pa \Omega$. Notice that these assumptions imply that $\ft C_{\ft{max}}$ contains the global minima of $f$ in $\Omega$. 
Assumptions~\eqref{eq.hip2} and~\eqref{eq.hip3}   ensure that when $X_{0}\sim\nu_h$ ($\nu_h$ being
the quasi-stationary distribution introduced in
Definition~\ref{def:QSD}) or $X_0=x\in  \ft C_{\ft{max}}$,   the law
of  $X_{\tau_{\Omega}}$ concentrates on the set $ \pa \ft C_{\ft{max}}
\cap \pa \Omega $, (see~\cite[Theorem 1]{DLLN-saddle1} and items~1 and~2 in
Theorem~\ref{thm.main} below).  
Finally, the last assumption is:
  \begin{itemize}
\item    Fourth geometric assumption:
 \begin{equation} \tag{\textbf{A4}} \label{eq.hip4}
\text{ \eqref{eq.hip1} holds and }\pa \ft C_{\ft{max}}  \cap \Omega \text{ contains no  \textit{separating saddle point} of $f$,}
 \end{equation}
where the proper definition  of a separating saddle point of $f$   is  introduced below in~Section~\ref{sec.SSP}. 
 \end{itemize}
 \noindent
 Assumption~\eqref{eq.hip4} is equivalent to the fact that any minimal energy
path connecting a point in $\ft C_{\ft{max}}$ to the boundary $\partial
\Omega$ remains necessarily within $\overline{\ft C_{\ft{max}}}$. In particular, such a path leaves $\Omega$ on $\partial \ft C_{\ft{max}} \cap \partial \Omega$. Notice indeed that  Assumption~\eqref{eq.hip4} implies  $\pa  \ft C_{\ft{max}} \cap \pa \Omega\neq \emptyset$   (this is a consequence of~\cite[Proposition 15]{DLLN-saddle1}). 
  The assumptions~\eqref{eq.hip4} together
  with~\eqref{H-M},~\eqref{eq.hip1},~\eqref{eq.hip2},
  and~\eqref{eq.hip3},  ensure that the probability  that the
  process~\eqref{eq.langevin} starting from $x\in  \ft C_{\ft{max}}$
  leaves $\Omega$ through any sufficiently small    neighborhood of
  $z\in \pa \Omega \setminus \pa \ft C_{\ft{max}}$ in~$\pa \Omega$  is
  exponentially small when $h\to 0$, see item~3 in
  Theorem~\ref{thm.main} below. We refer to Remark~\ref{re.discussionHP} below for
  a discussion on the necessity of the
  assumptions~\eqref{H-M},~\eqref{eq.hip1},~\eqref{eq.hip2},~\eqref{eq.hip3},
  and \eqref{eq.hip4} to get these results. 
 Figure~\ref{fig:okay} gives a one-dimensional example where~\eqref{eq.hip1},~\eqref{eq.hip2},~\eqref{eq.hip3},  and~\eqref{eq.hip4} are satisfied, and Figure~\ref{fig:shema_nota}  gives a typical example where~\eqref{eq.hip1},~\eqref{eq.hip2},
 and~\eqref{eq.hip3} are satisfied  but not~\eqref{eq.hip4}.


  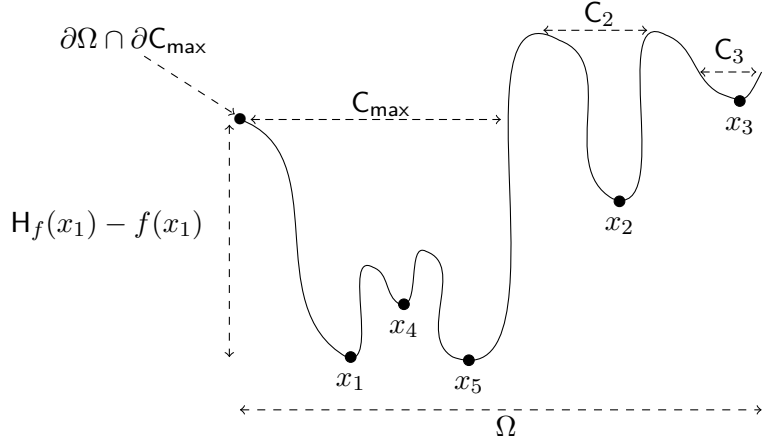
\begin{figure}[h!]
\begin{center}
\begin{tikzpicture}[scale=0.7]
\coordinate (b1) at (0,4.5);
\coordinate (b2) at (2 , 0);
\coordinate (b2a) at (2.5 , 1.7);
\coordinate (b2b) at (3 , 1);
\coordinate (b2c) at (3.5 , 2);
\coordinate (b4) at (4,0);
\coordinate (b5) at (6,6);
\coordinate (b6) at (7,3);
\coordinate (b7) at (8,6.1);
\coordinate (b8) at (9,5);
\coordinate (b9) at (10,6);
  \draw (b8) ..controls (9.5,4.8)   .. (9.8,5.4) ;
\draw [black!100, in=150, out=-10, tension=10.1]
  (b1)[out=-20]   to  (b2)  to (b2a) to (b2b) to (b2c)   to (b4) to (b5) to (b6) to (b7) to (b8);
     \draw [dashed, <->]   (0,-1) -- (9.8,-1) ;
      \draw (5,-1.3) node[]{$\Omega$};
   \draw [dashed, <->]   (-0.2,4.4) -- (-0.2,0) ;
      \draw (2.65,4.8) node[]{$ \ft C_{\ft{max}}$};
         \draw [densely  dashed, <->]  (0.2,4.5) -- (4.9,4.5) ;
          \draw (6.75,6.5) node[]{$ \ft C_2$};
             \draw [densely  dashed, <->]  (5.7,6.19) -- (7.65,6.19) ;
             \draw [densely  dashed, <->]  (8.65,5.4) -- (9.7,5.4) ;
              \draw (9.2,5.8) node[]{$ \ft C_3$};
          \draw (-2.5,2.5) node[]{$ \ft H_f(x_1)- f(x_1)$};
               \draw (-2,6) node[]{$ \pa \Omega \cap \pa \ft C_{\ft{max}}$};
\draw [dashed, ->]   (-1.8,5.7) -- (-0.1,4.6) ;
      \tikzstyle{vertex}=[draw,circle,fill=black,minimum size=4pt,inner sep=0pt]
        \draw (0.0,4.5) node[]{$\bullet$}; 
\draw (2.08,0) node[vertex,label=south: {$x_1$}](v){}; 
\draw (7.13,2.95) node[vertex,label=south: {$x_2$}](v){}; 
\draw (9.39,4.85) node[vertex,label=south: {$x_3$}](v){}; 
\draw (4.3,-0.06) node[vertex,label=south: {$x_5$}](v){}; 
\draw (3.08 , 1) node[vertex,label=south: {$x_4$}](v){}; 
    \end{tikzpicture}
\caption{A one-dimensional case where~\eqref{eq.hip1},~\eqref{eq.hip2},~\eqref{eq.hip3} and~\eqref{eq.hip4} are satisfied.  
On the figure, $f(x_1)=f(x_5)$, $\ft H_f(x_1)=\ft H_f(x_4)=\ft H_f(x_5)$,~$\mathcal C=\{\ft C_{\ft{max}}, \ft C_2,\ft C_3\}$,  $\pa  \ft C_2\cap \pa  \ft C_{\ft{max}}=\emptyset$ and $\pa \ft C_3\cap \pa  \ft C_{\ft{max}}=\emptyset$. }
 \label{fig:okay}
 \end{center}
\end{figure}

\begin{figure}[h!]
\begin{center}
\begin{tikzpicture}[scale=0.8]
\tikzstyle{vertex}=[draw,circle,fill=black,minimum size=4pt,inner sep=0pt]
\tikzstyle{ball}=[circle, dashed, minimum size=1cm, draw]
\tikzstyle{point}=[circle, fill, minimum size=.01cm, draw]
\draw [rounded corners=10pt] (1,0.5) -- (-0.25,2.5) -- (1,5) -- (5,6.5) -- (7.6,3.75) -- (6,1) -- (4,-0.3) -- (2,0) --cycle;
\draw [thick, densely dashed,rounded corners=10pt] (1.5,0.5) -- (.25,1.5) -- (0.5,2.5) -- (0.09,3.5) -- (2.75,3.75) -- (3.5,3) -- (2.4,2) -- (2.9,1.5) --cycle;
\draw [thick, densely dashed,rounded corners=10pt]    (3,3.9)  -- (3.4,3) -- (4.8,3.3) --(5.5,2.9)--(6.5,4) --(6.5,5)  -- (5.3,6) -- (3,4.58)  --cycle;
\draw [thick, densely dashed,rounded corners=10pt] (5.9,0.78)--(6,1.7) -- (5,2.3) -- (3.7,1.5) -- (3.4,0.6) --cycle;
 \draw (1.4,1.3) node[]{$\ft C_{\ft{max}}$};
  \draw (5.4,5.5) node[]{$\ft C_2$};
    \draw (4.3,1) node[]{$\ft C_3$};
     \draw  (2.4,4.9) node[]{$\Omega$};
    \draw  (7.8,3) node[]{$\pa \Omega$};
\draw (3.3 ,3.2) node[vertex,label=north east: {$z_5$}](v){};
\draw  (5.9,0.99) node[vertex,label=south east: {$z_4$}](v){};
\draw (1.7 ,2.5) node[vertex,label=north west: {$x_1$}](v){};
\draw (4.9 ,4.4) node[vertex,label=north : {$x_2$}](v){};
\draw (0.38,1.45) node[vertex,label=south west: {$z_1$}](v){};
\draw (6.2,5.2) node[vertex,label=north east: {$z_3$}](v){};
\draw (0.17,3.4) node[vertex,label=north west: {$z_2$}](v){};
\draw(5,1.5)  node[vertex,label=north: {$x_3$}](v){};
\end{tikzpicture}

\begin{tikzpicture}[scale=0.7]
\tikzstyle{vertex}=[draw,circle,fill=black,minimum size=5pt,inner sep=0pt]
\tikzstyle{ball}=[circle, dashed, minimum size=1cm, draw]
\tikzstyle{point}=[circle, fill, minimum size=.01cm, draw]

\draw [dashed] (-5.4,-0.6)--(6,-0.6);
\draw [dashed,->] (-5.4,-0.6)--(-5.4,3);
\draw [dashed] (6,-0.6)--(6,3);
\draw [densely dashed,<->] (-5.3,-1.4)--(5.8,-1.4);
 \draw (0,-1.68) node[]{$\pa \Omega$};
 \draw (-5.9,2.8) node[]{$f|_{\pa \Omega}$};
 \draw[thick] (-5.4,2) ..controls  (-5.2,1.96).. (-5,1.6);
\draw[thick] (-5,1.6) ..controls  (-3.7,-1.29).. (-2,1.6);
\draw[thick] (-1.5,1.6) ..controls  (-1,-1.58).. (0.2,2.8) ;
\draw[thick] (-2,1.6) ..controls  (-1.87,1.9) and (-1.63,2.1) .. (-1.5,1.6);
\draw [thick] (0.2,2.8)  ..controls  (0.3,3.2) and (0.67,3.4).. (0.8,3);
\draw[thick] (0.8,3) ..controls  (1.8,-1.68).. (2.7,2.4);
\draw[thick] (2.7,2.4) ..controls (2.8,2.7) and (3.1,2.7)..    (3.3,2.4) ;
\draw[thick] (3.3,2.4) ..controls  (4.3,0).. (5.5,1.5);
\draw[thick] (5.5,1.5) ..controls  (5.83,2).. (6,2);

\draw (1.8,-0.58) node[vertex,label=south: {$z_3$}](v){};
\draw (-3.7,-0.6) node[vertex,label=south: {$z_1$}](v){};
\draw (-0.95,-0.6)  node[vertex,label=south: {$z_2$}](v){};
\draw  (4.45,0.45)  node[vertex,label=south: {$z_4$}](v){};
\end{tikzpicture}

\caption{Schematic representation of     $\mathcal C$ (see~\eqref{mathcalC-def}) and $f|_{\pa \Omega}$ when the assumptions~\eqref{H-M}, ~\eqref{eq.hip1},~\eqref{eq.hip2} and~\eqref{eq.hip3} are satisfied. 
  In this representation, $x_1\in \Omega$ is the global minimum of $f$ in $\overline \Omega$ and  the other local minima of $f$ in $\Omega$ are  $x_2$  and $x_3$. Moreover, $\min_{\pa \Omega} f=f(z_1)=f(z_2)=f(z_3)=\ft H_f(x_1)=\ft H_f(x_2)<\ft H_f(x_3)=f(z_4)$, $\{f<\ft H_f(x_1)\}$ has two connected components: $\ft C_{\ft{max}}$ (see~\eqref{eq.hip1}) which contains $x_1$ and  $\ft C_2$ which contains~$x_2$.  Thus, one has $\mathcal C=\{\ft C_{\ft{max}},\ft C_2, \ft C_3\}$. In addition, $\{z_1,z_2,z_3\}=\argmin_{\pa \Omega} f$ ($\ft k_1^{  \pa \Omega}=3$),   $\{z_5\}=\overline{\ft C_{\ft{max}}}\cap\overline{\ft  C_2}$ is a separating saddle point of $f$ (thus~\eqref{eq.hip4} is not satisfied), and $\pa \ft C_{\ft{max}}\cap \pa \Omega=\{z_1,z_2\}$ ($\ft k_1^{\pa \ft C_{ \ft{max}}}=2$).  
}
 \label{fig:shema_nota}
 \end{center}
\end{figure}
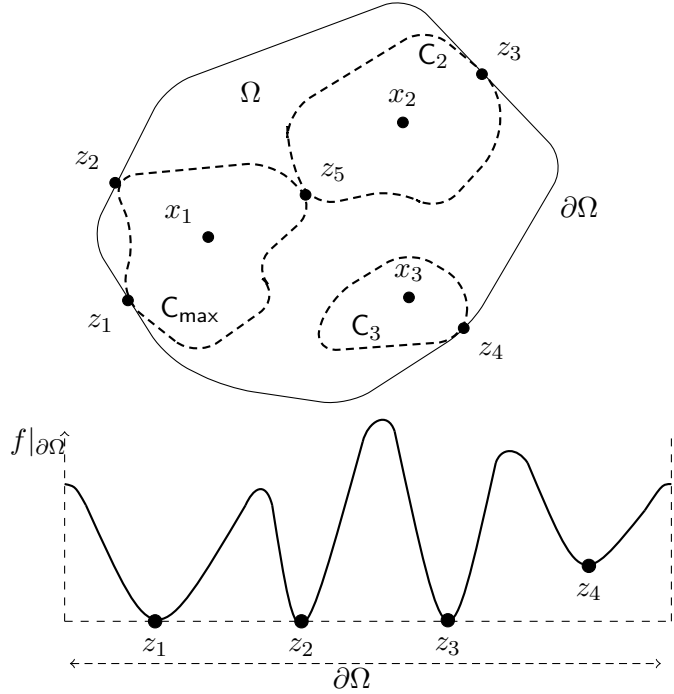

\subsubsection{Notation associated with the function~$f$ }
\label{sec.SSP}

In order to state our main result, we will need a few more notation
associated with the function~$f$.

\paragraph{Domain of attraction $\mathcal A(D)$.} The  domain of attraction of a subset
$D$ of $\Omega$ for the dynamics $\dot{x}=-\nabla f(x)$ is defined as follows.  Let $x\in \Omega$ and denote by~$\varphi_{t}(x)$ the solution to the ordinary differential equation 
\begin{equation}\label{hbb}
  \frac{d}{dt}\varphi_{t}(x)=-\nabla f(\varphi_{t}(x)) \text{ with } \varphi_{0}(x)=x,
  \end{equation}
on the interval $t\in [0,t_x]$, where $t_x > 0$ is defined by
$$t_x=\inf  \{t\ge 0, \ \varphi_{t}(x)\notin \Omega\}.$$
Let $x\in \Omega$ be such that $t_x=+\infty$. The $\omega$-limit set of $x$, denoted by $\omega(x)$, is defined by
$$\omega(x)=\{y\in \overline \Omega, \, \exists (s_n)_{n\in \mathbb N} \in (\mathbb R_+)^{\mathbb N}, \,\lim_{n\to \infty}s_n=+ \infty, \,\lim_{n\to \infty}\varphi_{s_n}(x)=y \}.$$
Let us recall that the $\omega$-limit set $\omega(x)$ is included in the set of the critical points of $f$ in~$\overline \Omega$. 
Moreover, when $f$ has a finite number of critical points in
$\overline \Omega$,  $\omega(x)$ is either empty (if $t_x < \infty$)
or of cardinality one (if $t_x = \infty$).  
Let $D$ be a subset of $\Omega$. The domain of attraction of a subset $D$ of $\Omega$ is  defined by\label{page.AD}
\begin{equation}\label{eq.ad}
 \mathcal A(D)=\{ x\in \Omega, \,t_x=+\infty \text{ and } \omega(x)\subset D\}.
 \end{equation}
\begin{remark}
Recall that  when \eqref{H-M} holds, one has: for all local
minima $x$ of $f$ in $\Omega$, $\ft C (x)\subset \Omega$,
where $\ft C(x)$ is defined in~\eqref{eq.Cdef2} (see~\cite[Remark 7]{DLLN-saddle0}). This implies that for all $y\in \ft C (x)$,
$t_y=+\infty$  and  then, $\ft C (x)\subset \mathcal A(\ft C
(x))$.
\end{remark}

\paragraph{Generalized saddle points of $f$.}  We introduce in this
paragraph an ensemble of points on $\partial \Omega$, which will
contain the exit points of the process $(X_t)_{t \ge0}$ from $\Omega$.
Let us define 
$$ 
\ft U_1^{\pa \Omega}:=\{z \in \pa \Omega, \,  z \text{ is a local minimum of } f|_{\pa \Omega} \text{ but }  \text{not a local minimum of~$f$ in~$\overline \Omega$}\}.
$$
 Notice that an equivalent definition of $\ft U_1^{\pa \Omega}$ is 
 \begin{align}
\label{eq.mathcalU1_bis}
\ft U_1^{\pa \Omega}=\{z \in \pa \Omega, \,  z \text{ is a local minimum of } f|_{\pa \Omega}  \, \text{ and }  \, \pa_nf(z)>0\},
\end{align}
which follows from the fact that $\vert \nabla f(x)\vert  \neq 0$ for all $x\in \pa \Omega$.
The set $\ft U_1^{\pa \Omega}$ contains the so-called {\em generalized
saddle points} of $f$ on the boundary $\pa
\Omega$~\cite{HeNi1,Lep}. These generalized saddle points are indeed
geometrically saddle points of the function $f$ extended by $-\infty$
outside $\overline{\Omega}$, which is consistent with the fact that
  homogeneous Dirichlet boundary conditions are used to define the operator $L^D_{f,h}$.
  
Let us introduce the following notation for the $\ft k_1^{\pa \Omega}$ elements of  $\ft U_1^{\pa \Omega}\cap \argmin_{\pa \Omega} f$:\label{page.k1paomega}
\begin{equation}\label{eq.z11}
\{z_1,\ldots,z_{\ft k_1^{\pa \Omega}}\}=\ft U_1^{\pa \Omega}\cap \argmin_{\pa \Omega} f.
\end{equation}
Let us assume that the assumptions~\eqref{eq.hip1},~\eqref{eq.hip2}, and~\eqref{eq.hip3} are satisfied.  
Let us recall that $\ft C_{\ft{max}}$ is defined in~\eqref{eq.hip1}. Moreover, in this case, one has  $\ft k_1^{\pa \Omega}\ge 1$ and
$$\pa\ft C_{\ft{max}} \cap  \pa \Omega \subset \{z_1,\ldots,z_{\ft k_1^{\pa \Omega}}\}.$$
Indeed, by assumption $ \pa \ft C_{\ft{max}} \cap  \pa \Omega\subset \{f=\min_{\pa \Omega} f\}$ (see~\eqref{eq.hip3}) and there is no local minimum of~$f$ in~$\overline \Omega$  on~$\pa \ft C_{\ft{max}}$ (since $\ft C_{\ft{max}}$ is a connected component of a sublevel set of~$f$). 
 We assume lastly  that the set $\{z_1,\ldots,z_{\ft k_1^{\pa \Omega}}\}$ is  ordered such that:\label{page.k1pacmax}
\begin{equation}\label{eq.k1-paCmax}
\{z_1,\ldots,z_{\ft k_1^{\pa \ft C_{\ft{max}} }}\}=\{z_1,\ldots,z_{\ft k_1^{\pa \Omega}}\} \cap \pa  \ft C_{\ft{max}}.
\end{equation}
Notice that $\ft k_1^{\pa \ft C_{\ft{max}}}\in \mathbb N^*$ and $\ft k_1^{\pa \ft C_{\ft{max}}}\le \ft k_1^{\pa \Omega}$.  

\paragraph{Separating saddle points.} Let us finally introduce the notion of {\em separating saddle point}, following~\cite[Section 4.1]{HeHiSj}.
To this end, let us  first recall that according to~\cite[Section 5.2]{HeNi1}, for any non critical point $z\in\Omega$,  for  $r>0$ small enough
$$
\{f<f(z)\}\cap B(z,r) \text{ is connected},
$$
and  for any critical point $z\in\Omega$ of index $p$ of the Morse function $f$,  for  $r>0$ small enough, 
one has the three possible cases:
\begin{itemize}
\item either   $p=0 $ (i.e. $z$  is a local minimum of  $f$)		and  $\{f<f(z)\}\cap B(z,r)=\emptyset$,  \item or $p=1$ \text{ and  } $\{f<f(z)\}\cap B(z,r)$ \text{ has exactly two connected components}, 
\item  or $p\ge 2$ \text{ and  } $\{f<f(z)\}\cap B(z,r)$ \text{ is connected}, 
\end{itemize}
where $B(z,r):=\{x\in \overline \Omega \ \text{s.t.} \ |x-z|<r\}$. 
A separating saddle point of $f$ in $\Omega$  is a saddle point of $f$ in $\Omega$ such that   for any $r>0$ small enough, the two connected components of $\{f<f(z)\}\cap B(z,r)$ 
   are contained in different connected components  of
 $\{ f< f(z) \}$. 

\medskip

Figure~\ref{fig:shema_nota} gives an illustration of the notations
introduced in this section.

\subsubsection{Main results on the exit point distribution}

In Theorem~\ref{thm.main}, we first make explicit a geometric setting
and an ensemble of initial conditions for which the exit distribution
is the same as when starting from the quasi-stationary distribution.

\begin{theorem}\label{thm.main}
Let us assume  that the assumptions \eqref{H-M},~\eqref{eq.hip1},~\eqref{eq.hip2}, and~\eqref{eq.hip3} are satisfied. 
  Let  $F\in L^{\infty}(\partial \Omega,\mathbb R)$ and $(\Sigma_{i})_{i\in\{1,\dots,\ft k_{1}^{\pa \Omega}\}}$ be a family  of pairwise disjoint open subsets of~$\pa \Omega$ (i.e. such that $\Sigma_i\cap \Sigma_j=\emptyset$ whenever  $i\neq j$) such that 
  $$\text{for all } i\in\big \{1,\dots,\ft k_{1}^{\pa \Omega}\big \},  \ z_{i}\in \Sigma_{i},$$ 
  where we recall that $\big \{z_1,\dots,z_{\ft k_{1}^{\pa\Omega}}\big
  \}= \ft U_1^{\pa \Omega}\cap \argmin_{\pa \Omega} f$
  (see~\eqref{eq.z11}).   Let~$K$ be a compact subset of $\Omega$ such
  that $ K\subset \mathcal A(\ft C_{\ft{max}})$ (see~\eqref{eq.hip1}
  and~\eqref{eq.ad} for the definitions of $\ft C_{\ft{max}}$ and~$\mathcal A$).  Let $x\in K$. 
    Then:
  \begin{enumerate} 
  \item 
There exists $c>0$ such that in the limit  $h\to 0$:
\begin{equation} \label{eq.t1}
\mathbb E_x \left [ F\left (X_{\tau_{\Omega}} \right )\right]=\sum \limits_{i=1}^{\ft k_1^{\pa \Omega}}
\mathbb E_x \left [ (\mathbf{1}_{\Sigma_{i}}F)\left (X_{\tau_{\Omega}} \right )\right]  +O\big (e^{-\frac ch}\big )
 \end{equation}
 and
 \begin{equation} \label{eq.t2}
\sum \limits_{i=\ft k_1^{\pa \ft C_{\ft{max}} }+1}^{\ft k_1^{\pa \Omega}}
\mathbb E_x \left [ (\mathbf{1}_{\Sigma_{i}}F)\left (X_{\tau_{\Omega}} \right )\right]  =O\big (h^{\frac14} \big ),
 \end{equation}
 where we recall that $\big \{z_1,\ldots,z_{\ft k_1^{\pa \ft C_{\ft{max}} }}\big \}=\pa \ft C_{\ft{max}}\cap \pa \Omega$ (see~\eqref{eq.k1-paCmax}). 
\item When for some $i\in\big \{1,\dots,\ft k_{1}^{\pa \ft C_{\ft{max}} }\big \}$ the function  $F$ is $C^{\infty}$ in a neighborhood  of $z_{i}$, one has when $h\to 0$:
\begin{equation} \label{eq.t3}
 \mathbb E_x \left [ (\mathbf{1}_{\Sigma_{i}}F)\left (X_{\tau_{\Omega}} \right )\right]=F(z_i)\,a_{i} +O(h^{\frac14}),
\end{equation}
 where\label{page.ai}
 \begin{equation} \label{ai}
 a_i=\frac{  \partial_nf(z_i)      }{  \sqrt{ {\rm det \ Hess } f|_{\partial \Omega}   (z_i) }  } \left (\sum \limits_{j=1}^{\ft k_1^{\pa \ft C_{\ft{max}} }} \frac{  \partial_nf(z_j)      }{  \sqrt{ {\rm det \ Hess } f|_{\partial \Omega}   (z_j) }  }\right)^{-1}. \end{equation}
 \item 
When~\eqref{eq.hip4} is satisfied 
 the remainder term  $O( h^{\frac14})$ in \eqref{eq.t2}
 is   of the order  $O\big (e^{-\frac{c}{h}}\big )$ for some $c>0$  
and
the remainder term  $O\big (h^{\frac14}\big )$  in \eqref{eq.t3} is of the order $O(h)$  and admits a full asymptotic expansion in~$h$ (see~\eqref{eq.asymptoO(h)} below). 
\end{enumerate} 
Finally,   the constants involved in  the remainder terms in~\eqref{eq.t1},~\eqref{eq.t2}, and~\eqref{eq.t3}  are uniform with respect to $x\in K$. 
\end{theorem}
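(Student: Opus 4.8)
The plan is to deduce Theorem~\ref{thm.main} from the corresponding result for the quasi-stationary distribution, proved in~\cite{DLLN-saddle1}, by controlling the dependence of the exit point distribution on the initial condition $x\in K$. The key mechanism is that when $x\in \mathcal A(\ft C_{\ft{max}})$, the deterministic flow $\varphi_t(x)$ enters the well $\ft C_{\ft{max}}$ in finite time, and the stochastic process $(X_t)_{t\ge 0}$ shadows this flow with overwhelming probability on any fixed time interval, by a standard large deviations estimate (Freidlin--Wentzell). Thus, with probability $1-O(e^{-c/h})$, the process reaches a small ball $B$ around a point of $\ft C_{\ft{max}}$ — say around the global minimum, or simply any fixed compact subset of $\ft C_{\ft{max}}$ — before leaving $\Omega$, and in particular before time $T$ for some fixed $T$. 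Once inside that ball, one wants to say that the law of the exit point is, up to exponentially small errors, the same as if the process had started from $\nu_h$.

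The main technical step is therefore a \emph{comparison of exit distributions} for initial conditions within $\ft C_{\ft{max}}$, including the quasi-stationary one. I would proceed as follows. First, fix $T>0$ large (independent of $h$) and decompose $\mathbb E_x[F(X_{\tau_\Omega})]$ according to whether $\tau_\Omega \le T$ or $\tau_\Omega > T$; the large deviations lower bound on the exit cost $\ft H_f(x)-f(x)>0$ from $\ft C_{\ft{max}}$ shows $\mathbb P_x[\tau_\Omega \le T] = O(e^{-c/h})$, so the contribution of $\{\tau_\Omega\le T\}$ to~\eqref{eq.t1} is negligible (using $F\in L^\infty$). On $\{\tau_\Omega>T\}$, I would use the Markov property at time $T$: $\mathbb E_x[\mathbf 1_{\tau_\Omega>T} F(X_{\tau_\Omega})] = \mathbb E_x[\mathbf 1_{\tau_\Omega>T}\,\Phi_h(X_T)]$ where $\Phi_h(y):=\mathbb E_y[F(X_{\tau_\Omega})]$. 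Then I would invoke the convergence~\eqref{eq.cv_qsd} of ${\rm Law}(X_T\mid T<\tau_\Omega)$ to $\nu_h$, made quantitative: since the exit is metastable (the spectral gap of $-L^D_{f,h}$ is exponentially larger than $\lambda_h$ on the relevant scale, a fact established in the companion analysis), for $T$ a sufficiently large constant the total variation distance between ${\rm Law}_x(X_T\mid T<\tau_\Omega)$ and $\nu_h$ is $O(e^{-c/h})$, uniformly in $x\in K$ — here one uses that $K$ is a compact subset of $\mathcal A(\ft C_{\ft{max}})$ so that $u_h(x)$ is bounded below by $e^{-o(1)/h}$ times its sup, giving a uniform lower bound on $\mathbb P_x[T<\tau_\Omega]$. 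Combining, $\mathbb E_x[F(X_{\tau_\Omega})] = \mathbb P_x[T<\tau_\Omega]\,\mathbb E_{\nu_h}[F(X_{\tau_\Omega})] + O(e^{-c/h}) = \mathbb E_{\nu_h}[F(X_{\tau_\Omega})] + O(e^{-c/h})$, again using $\mathbb P_x[T<\tau_\Omega]=1-O(e^{-c/h})$. All estimates are uniform over the compact $K$ because the large deviations rate function and the lower bounds on $u_h$ are.

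Having reduced everything to $X_0\sim\nu_h$, items~1, 2 and~3 follow \emph{verbatim} from~\cite[Theorem~1]{DLLN-saddle1} (and its refinement under~\eqref{eq.hip4}): equation~\eqref{eq.t1} is the statement that exit occurs near $\ft U_1^{\pa\Omega}\cap\argmin_{\pa\Omega}f$ up to $O(e^{-c/h})$, \eqref{eq.t2} that the mass outside $\pa\ft C_{\ft{max}}\cap\pa\Omega$ is $O(h^{1/4})$, \eqref{eq.t3}--\eqref{ai} the sharp asymptotics with the prefactors $a_i$ built from $\pa_nf$ and $\det\Hess f|_{\pa\Omega}$, and item~3 the improvement to $O(e^{-c/h})$, resp. $O(h)$ with full expansion, under the no-separating-saddle assumption~\eqref{eq.hip4}. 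I expect the main obstacle to be making the convergence~\eqref{eq.cv_qsd} quantitative with an exponentially small, $K$-uniform rate: this requires a fixed relaxation time $T$ working for all $x\in K$ simultaneously, which in turn rests on (i) a uniform-in-$h$ lower bound $\mathbb P_x[T<\tau_\Omega]\ge e^{-o(1)/h}$ valid for $x\in K\subset\mathcal A(\ft C_{\ft{max}})$ — obtained by following the gradient flow into $\ft C_{\ft{max}}$ and then using that the principal eigenfunction $u_h$ is, up to $e^{o(1)/h}$, flat on $\ft C_{\ft{max}}$ — and (ii) control of the spectral gap separating $\lambda_h=O(e^{-2(\ft H_f-\min f)/h})$ from the rest of the spectrum of $-L^D_{f,h}$, which holds precisely because $\ft C_{\ft{max}}$ is the unique deepest well by~\eqref{eq.hip1}. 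The uniformity claim in the final sentence of the theorem is then automatic from the uniformity of each of these ingredients on the compact set $K$.
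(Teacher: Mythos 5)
Your overall reduction scheme matches the paper's: shadow the deterministic flow $\varphi_t(x)$ for a finite time $T_K$ using Freidlin--Wentzell large deviations to enter a compact subset of $\ft C_{\ft{max}}$ with probability $1-O(e^{-c/h})$, then apply the Markov property and reduce everything to the quasi-stationary case treated in the companion paper. Step~2 of Proposition~\ref{pr.exp-qsd-x} is exactly this, and your uniformity-over-$K$ argument is on the right track.

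The gap is in your middle step, namely the claim that for a \emph{fixed} constant $T$ the total variation distance $\bigl\|{\rm Law}_x(X_T\mid T<\tau_\Omega)-\nu_h\bigr\|_{TV}=O(e^{-c/h})$. The fact established in Proposition~\ref{pr.lambdah} is that $\min\sigma(L^D_{f,h})\setminus\{\lambda_h\}\ge e^{c/h}\lambda_h$, which is a statement \emph{relative} to $\lambda_h$: the relaxation time $1/\lambda_2$ is exponentially smaller than the exit time $1/\lambda_h$, but $1/\lambda_2$ can itself be exponentially large in $1/h$ whenever $f$ has more than one local minimum in $\Omega$ --- which the hypotheses of the theorem explicitly allow. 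In that regime $e^{-(\lambda_2-\lambda_1)T}\to 1$ as $h\to 0$ for any fixed $T$, so the conditional law at time $T$ retains an $O(1)$ component in the direction of the higher eigenmodes and is \emph{not} exponentially close to $\nu_h$. Making this route work would require $T=T(h)\to\infty$ at a rate tuned to $\lambda_2(h)$, but then one loses the fixed-horizon Freidlin--Wentzell control and the problem becomes substantially more delicate. The paper avoids this entirely: instead of quantifying~\eqref{eq.cv_qsd}, it proves a \emph{leveling} estimate (Proposition~\ref{level}) showing via elliptic estimates and a Gagliardo--Nirenberg bootstrap that $\nabla v_h$, with $v_h(y)=\mathbb E_y[F(X_{\tau_\Omega})]$, is $O(e^{-c/h})$ in $L^\infty$ on any compact subset of $\ft C_{\ft{max}}$, and combines this with the concentration of the weight $u_h\,e^{-2f/h}$ on $\ft C_{\ft{max}}$ (Corollary~\ref{co.moyenne}) when writing $\mathbb E_{\nu_h}[F(X_{\tau_\Omega})]$ as a $u_h\,e^{-2f/h}$-weighted average of $v_h$. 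This PDE argument is insensitive to the possibly exponentially long mixing time. Note also that the leveling result requires $F$ smooth, which is why the paper has an additional approximation step (Section~\ref{sec:end_proof}) to pass to $F\in L^\infty$; your TV route, had it worked, would have bypassed that step, but as it stands the key estimate does not hold.
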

\noindent
Let us recall that for $\alpha>0$,~$(r(h))_{h>0}$ admits a full asymptotic expansion in~$h^\alpha$   if there exists a sequence $(a_k)_{k\geq 0}\in \mathbb R^{\mathbb N}$ such that for any $N\in \mathbb N$, it holds in the limit $h\to 0$: \begin{equation}\label{eq.asymptoO(h)}
r(h)=\sum_{k=0}^Na_kh^{\alpha k}+O\big (h^{\alpha(N+1)}\big ).
\end{equation}

\noindent
According to~\eqref{eq.t1}, when the function $F$ belongs to $C^{\infty}(\pa \Omega,\mathbb R)$ and $x\in \mathcal A(\ft C_{\ft{max}})$, one has  in the limit $h\to 0$:
$$
\mathbb E_{x} \left [ F\left (X_{\tau_{\Omega}} \right )\right] =
\sum_{i=1}^{\ft k_{1}^{\pa \ft C_{\ft{max}} } } a_i F(z_i) + O(h^{\frac14})= \frac{\sum \limits_{i=1}^{\ft k_{1}^{\pa \ft C_{\ft{max}} } }   {\int_{\Sigma_i} F \partial_nf \, e^{-\frac{2}{h}f }} }{\sum \limits_{i=1}^{\ft k_{1}^{\pa \ft C_{\ft{max}} } }   {\int_{\Sigma_i}  \partial_nf \, e^{-\frac{2}{h} f }  } }  + o_h(1), 
$$ 
where the order in $h$ of the remainder term $o_h(1)$  depends on the support of $F$ and on whether or not  the assumption~\eqref{eq.hip4} is satisfied.  
This is  reminiscent  of  previous  results  obtained
in~\cite{kamin1979elliptic,Kam,Per,day1984a,day1987r}. 

Theorem~\ref{thm.main} implies that when $X_0 = x\in \mathcal A(\ft
C_{\ft{max}})$,    the    law of~$X_{\tau_{\Omega}}$ concentrates on
$\{z_{1},\dots, z_{\ft k_1^{\ft C_{\ft{max}} }}\}=\pa \Omega \cap \pa
\ft C_{\ft{max}}$ in the limit $h\to 0$, with explicit formulas for   the probabilities  to exit through each of the $z_i$'s.  
Moreover, the probability to exit through a global minimum $z$ of
$f|_{\partial \Omega}$ which satisfies $\partial_nf(z)<0$ is
exponentially small in the limit $h\to 0$ (see~\eqref{eq.t1}) and when
assuming~\eqref{eq.hip4}, the probability to exit through $ z_{\ft
  k_1^{\ft C_{\ft{max}}}+1},\dots, z_{\ft k_1^{\pa \Omega}} $ is also
exponentially small even though  all these points belong to
$\argmin_{\pa\Omega}f$. Theorem~\ref{thm.main} is thus a
generalization of~\cite[Theorem 1]{DLLN-saddle1} to other
initial conditions than the quasi-stationary distribution $\nu_h$.

\begin{remark}\label{re.discussionHP} Assumptions~\eqref{H-M} and~\eqref{eq.hip1} ensure that $\ft C_{\ft{max}}$ appearing in Theorem~\ref{thm.main} 
is well  defined. Concerning the assumption~\eqref{eq.hip2}, there exist functions $f$    satisfying~\eqref{H-M} and~\eqref{eq.hip1} but not~\eqref{eq.hip2} such that when $X_0=x\in \ft C_{\ft{max}}$, the law of $X_{\tau_\Omega}$   concentrates on points which do not belong to $\argmin_{\pa \Omega} f$ (see indeed~\cite[Figure~4]{DLLN-saddle0}).  The same holds for the assumption~\eqref{eq.hip3}:  there exist functions $f$ which satisfy~\eqref{H-M},~\eqref{eq.hip1}, and~\eqref{eq.hip2} but not~\eqref{eq.hip3}  such that when $X_0=x\in \ft C_{\ft{max}}$,  the law of $X_{\tau_\Omega}$   concentrates on points which do not belong to $\argmin_{\pa \Omega} f$  (see Figure~\ref{fig:exemple1} and~\cite[Figure~5]{DLLN-saddle0}). Thus,~\eqref{eq.hip2} and~\eqref{eq.hip3} are necessary to ensure  that the law of $X_{\tau_\Omega}$   concentrates on points belonging to $\argmin_{\pa \Omega} f$. 
Finally, assumption~\eqref{eq.hip4}   is necessary to get  item~3 in
  Theorem~\ref{thm.main}. Indeed, in~\cite[Section~1.4]{DLLN-saddle0}, we provide examples of functions $f$ satisfying~\eqref{H-M},~\eqref{eq.hip1},~\eqref{eq.hip2},
  and~\eqref{eq.hip3} but not~\eqref{eq.hip4}, such that  for all
   sufficiently small neighborhood  $ \Sigma_z$ of $z\in \argmin_{\pa \Omega} f\setminus \pa \ft C_{\ft{max}}$ in $\pa
  \Omega$,  $\mathbb P  \left [ X_{\tau_{\Omega}} \in
       \Sigma_z\right]=C\sqrt{h}\, (1+o(1))$ in the limit $h\to 0$, for some $C>0$
    independent of $h$ and when $X_0=x\in \ft C_{\ft{max}}$.   
\end{remark}

\begin{remark}\label{re.txfini}
When $x\in \Omega$ is such  that $t_x<+\infty$, it is a simple consequence of the large deviations estimate~\eqref{eq.WFr-est} below that,  in the limit $h\to 0$,  the process~\eqref{eq.langevin} almost surely exits~$\Omega$ through any neighborhood of $\varphi_{t_x}(x)$ in $ \pa \Omega$. 
\end{remark}

It is also possible to describe the exit point distribution when
$X_0=x\in \mathcal A(\ft C)$ and $\ft C\in \mathcal C$
is not necessarily $\ft C_{\ft{max}}$ (we recall that $\mathcal C$ is
defined in~\eqref{mathcalC-def}). This is the objective of
Theorem~\ref{thm.2}, whose proof uses Theorem~\ref{thm.main} applied to
a suitable subdomain of $\Omega$ containing~$\ft C$.

\begin{theorem}\label{thm.2}
Let us assume that  \eqref{H-M} holds. Let $\ft C\in \mathcal C$. Let us assume that 
\begin{equation}\label{eq.cc1}
\pa \ft C\cap \pa \Omega\neq \emptyset \  \text{ and } \  \vert \nabla f\vert \neq 0 \text{ on } \pa \ft C.
\end{equation}
 Recall that  $\pa \ft C\cap \pa \Omega \subset \ft U_1^{\pa \Omega}$
 (see~\eqref{eq.mathcalU1_bis} for a definition of $\ft U_1^{\pa \Omega}$). Let $F\in L^\infty(\pa \Omega,\mathbb R)$. 
For all $z \in \pa \ft C\cap \pa \Omega$, let  $\Sigma_{z}$ be   an open subset of~$\pa \Omega$ such that $ z\in \Sigma_{z}$. 
Let $K$ be a compact subset of $\Omega$ such that $K\subset \mathcal A(\ft C)$. Then:
\begin{enumerate}
\item There exists $c>0$ such that for  $h$ small enough,
$$\sup_{x\in K}\mathbb E_x\Big [\big (F \, \mbf 1_{ \pa \Omega \setminus  \bigcup_{z\in \pa \ft C\cap \pa \Omega} \Sigma_{z}}\big )\left (X_{\tau_{\Omega}} \right )\Big ]\le e^{-\frac ch}.$$
\end{enumerate}
Assume moreover that   the sets $(\Sigma_{z})_{z\in \pa \ft C\cap \pa \Omega}$  are pairwise disjoint (i.e. such that $\Sigma_{z}\cap \Sigma_{z'}=\emptyset$ whenever $z\neq z'$).  Let $z\in \pa \ft C\cap \pa \Omega$. 
\begin{enumerate}
\addtocounter{enumi}{1}
\item
If  $F$ is  $C^\infty$ in a neighborhood of $z$,   it holds for all $x\in K$, 
$$\mathbb E_{x}[(F\, \mbf 1_{ \Sigma_z})\left (X_{\tau_{\Omega}} \right )]= F(z)\frac{  \partial_nf(z)      }{  \sqrt{ {\rm det \ Hess } f|_{\partial \Omega}   (z) }  } \left (\sum \limits_{y\in\pa \ft C \cap \pa \Omega  } \frac{  \partial_nf(y)      }{  \sqrt{ {\rm det \ Hess } f|_{\partial \Omega}   (y) }  }\right)^{-1}+O(h),$$
in the limit $h \to 0$ and uniformly in~$x \in K$.
\end{enumerate}
\end{theorem}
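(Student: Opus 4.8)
The plan is to derive Theorem~\ref{thm.2} from Theorem~\ref{thm.main} by building an auxiliary subdomain $\Omega'$ of $\Omega$ in which $\ft C$ plays the role of the deepest well $\ft C_{\ft{max}}$, and then transferring the exit statement from $\Omega'$ back to $\Omega$ via the strong Markov property and the large deviations estimate~\eqref{eq.WFr-est}. Write $\ft H:=\ft H_f(x)$ for a local minimum $x$ of $f$ in $\ft C$, so that $\ft C=\ft C(x)$ is a connected component of $\{f<\ft H\}$. A preliminary observation is that~\eqref{eq.cc1} forces $\pa\ft C\cap\Omega\subset\{f=\ft H\}$ and $f\equiv\ft H$ on $\pa\ft C\cap\pa\Omega$, with no critical point of $f$ on $\pa\ft C$: a point $z\in\pa\ft C\cap\pa\Omega$ satisfies $f(z)\le\ft H$ as a limit of points of $\ft C$, while $f(z)<\ft H$ would yield a path from $x$ to $\pa\Omega$ staying in $\overline{\ft C}$ with maximal value $<\ft H$, contradicting the definition of $\ft H_f(x)$. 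Since $|\nabla f|$ is then bounded away from $0$ on a neighbourhood of the compact set $\pa\ft C$, the hypersurfaces $\{f=\ft H+\epsilon\}$ form, for small $\epsilon>0$, a thin smooth collar around $\pa\ft C$; I would take $\Omega'$ to be a smoothing of the connected component of $\{f<\ft H+\epsilon\}$ containing $\ft C$, modified near $\pa\ft C\cap\pa\Omega$ so that it coincides with $\Omega$ on a fixed neighbourhood $\mathcal N$ of $\pa\ft C\cap\pa\Omega$, with $\pa\Omega'\cap\Omega\subset\{\ft H<f<\ft H+\epsilon\}$, and --- this last point being what prevents $f|_{\pa\Omega'}$ from being locally constant --- with $f|_{\pa\Omega'}$ a Morse function on $\{\pa_nf>0\}$, which is arranged by a generic perturbation of $\pa\Omega'\cap\Omega$, using the genericity of Morse functions.

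Choosing $\epsilon$ smaller than the gap between $\ft H$ and the other critical values of $f$, and small enough that this collar does not reach the finitely many local minima of $f$ lying outside $\ft C$, one checks: $f|_{\overline{\Omega'}}$ satisfies~\eqref{H-M}; for every local minimum $x'$ of $f$ in $\Omega'$, the analogue of~\eqref{eq.Hfx} for $\Omega'$ equals $\ft H$ and the associated well (defined as in~\eqref{eq.Cdef2} for $\Omega'$) equals $\ft C$, so that $\mathcal C$ for $\Omega'$ reduces to $\{\ft C\}$, hence~\eqref{eq.hip1} holds with $\ft C_{\ft{max}}^{\Omega'}=\ft C$; moreover $\pa\ft C\cap\pa\Omega'=\pa\ft C\cap\pa\Omega=\argmin_{\pa\Omega'}f\neq\emptyset$, so~\eqref{eq.hip2} and~\eqref{eq.hip3} hold; and $\pa\ft C\cap\Omega'$ carries no critical point, hence no separating saddle point, so~\eqref{eq.hip4} holds. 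Furthermore $\ft U_1^{\pa\Omega'}\cap\argmin_{\pa\Omega'}f=\pa\ft C\cap\pa\Omega$ and $\ft k_1^{\pa\ft C_{\ft{max}}^{\Omega'}}=\ft k_1^{\pa\Omega'}=\#(\pa\ft C\cap\pa\Omega)$; and since $\Omega'=\Omega$ on $\mathcal N$, the quantities $\pa_nf(z)$ and $\hess f|_{\pa\Omega'}(z)$ coincide with their $\Omega$-versions for $z\in\pa\ft C\cap\pa\Omega$. Consequently the constants~\eqref{ai} associated with $\Omega'$ are exactly those appearing in Theorem~\ref{thm.2}.

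Next I would show that from any $x\in K$ the process reaches $\ft C$ before exiting $\Omega$, with probability $1-O(e^{-c/h})$ uniformly on $K$. Since $K$ is compact and contained in the open set $\mathcal A(\ft C)$, and $\ft C$ is forward-invariant under the flow $\varphi_t$, there exist $T>0$, a compact $K'\subset\ft C$ and $\delta>0$ such that for every $x\in K$, $\varphi_T(x)\in K'$ while $\varphi_{[0,T]}(x)$ stays in $\Omega$ at distance $>\delta$ from $\pa\Omega$. The uniform estimate~\eqref{eq.WFr-est} then gives $\mathbb P_x[A_h]\ge 1-e^{-c/h}$ for $h$ small, uniformly in $x\in K$, where $A_h:=\{\sup_{t\in[0,T]}|X_t-\varphi_t(x)|<\delta\}\subset\{X_T\in K'\}\cap\{\tau_\Omega>T\}$. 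On the other hand, applying Theorem~\ref{thm.main} to $f|_{\overline{\Omega'}}$ (legitimate by the previous two paragraphs) with the compact $K'$ and with small pairwise disjoint open sets $\Sigma_j^{\Omega'}\subset\Sigma_{z_j}\cap\pa\Omega'$, one around each point of $\pa\ft C\cap\pa\Omega$, yields, uniformly in $y\in K'$: by~\eqref{eq.t1}, the exit point $X_{\tau_{\Omega'}}$ lies in $\bigcup_j\Sigma_j^{\Omega'}\subset\pa\Omega$ up to $O(e^{-c/h})$ (in particular $X_{\tau_{\Omega'}}\in\pa\Omega'\cap\Omega$ with probability $O(e^{-c/h})$); and, when $F$ is $C^\infty$ near $z=z_j$, $\mathbb E_y[(F\mathbf 1_{\Sigma_j^{\Omega'}})(X_{\tau_{\Omega'}})]=F(z)a_z+O(h)$ with $a_z$ the constant of the statement, using~\eqref{eq.t3} together with item~3 of Theorem~\ref{thm.main} (valid since~\eqref{eq.hip4} holds for $\Omega'$).

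It then remains to assemble these facts using the Markov property at time $T$ and the inclusion $\Omega'\subset\Omega$: an exit of $\Omega'$ through $\bigcup_j\Sigma_j^{\Omega'}\subset\pa\Omega$ is an exit of $\Omega$ through that set, whereas an exit of $\Omega'$ through $\pa\Omega'\cap\Omega$ merely puts the process back in $\Omega$. For part~1, $\mathbb E_x[(F\mathbf 1_{\pa\Omega\setminus\bigcup_z\Sigma_z})(X_{\tau_\Omega})]\le\|F\|_\infty\big(\mathbb P_x[A_h^c]+\sup_{y\in K'}\mathbb P_y[X_{\tau_{\Omega'}}\notin\bigcup_j\Sigma_j^{\Omega'}]\big)=O(e^{-c/h})$, uniformly on $K$. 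For part~2, with the $\Sigma_z$ pairwise disjoint and $F$ smooth near $z$, write $\mathbb E_x[(F\mathbf 1_{\Sigma_z})(X_{\tau_\Omega})]=\mathbb E_x[\mathbf 1_{A_h}g(X_T)]+O(e^{-c/h})$, where $g(y):=\mathbb E_y[(F\mathbf 1_{\Sigma_z})(X_{\tau_\Omega})]$; splitting the exit of $\Omega$ from $y\in K'$ according to whether $\Omega'$ is exited through $\pa\Omega$ or through $\pa\Omega'\cap\Omega$ (the latter contributing $O(e^{-c/h})$), and reducing the former to $\mathbb E_y[(F\mathbf 1_{\Sigma_j^{\Omega'}})(X_{\tau_{\Omega'}})]$ up to $O(e^{-c/h})$ (by~\eqref{eq.t1} for $\Omega'$, since $z=z_j$ and the $\Sigma_z$ are pairwise disjoint, so $(\Sigma_z\cap\pa\Omega')\setminus\Sigma_j^{\Omega'}$ contains none of the $z_i^{\Omega'}$ when the $\Sigma_z$ are taken small), one gets $g(y)=F(z)a_z+O(h)$ uniformly on $K'$, whence $\mathbb E_x[\mathbf 1_{A_h}g(X_T)]=(F(z)a_z+O(h))\,\mathbb P_x[A_h]=F(z)a_z+O(h)$ uniformly on $K$; one may indeed assume each $\Sigma_z\subset\mathcal N$, since part~1 shows the left-hand side of part~2 changes only by $O(e^{-c/h})$ under such a shrinking. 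The main obstacle is the construction of $\Omega'$: besides verifying~\eqref{H-M} and~\eqref{eq.hip1}--\eqref{eq.hip4} with $\ft C_{\ft{max}}^{\Omega'}=\ft C$, the two genuinely delicate points are (i) arranging $f|_{\pa\Omega'}$ to be Morse on $\{\pa_nf>0\}$ --- impossible with the naive choice $\pa\Omega'\cap\Omega\subset\{f=\ft H+\epsilon\}$, which makes $f$ locally constant there --- and (ii) ensuring that for $\epsilon$ small the component of $\{f<\ft H+\epsilon\}$ containing $\ft C$ does not swallow another well of $f$, which is exactly where the hypothesis $|\nabla f|\neq0$ on $\pa\ft C$ (equivalently: no separating saddle on $\pa\ft C$) is essential. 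The transfer argument is routine given Theorem~\ref{thm.main} and~\eqref{eq.WFr-est}.
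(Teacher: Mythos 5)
Your proposal is correct and follows essentially the same route as the paper's proof: construct a smooth subdomain $\Omega'\subset\Omega$ (the paper's $\Omega_{\ft C}$) containing $\ft C$ on which $\ft C$ becomes $\ft C_{\ft{max}}$ and assumptions~\eqref{H-M},~\eqref{eq.hip1}--\eqref{eq.hip4} hold, apply Theorem~\ref{thm.main} to $\Omega'$, transfer via the inclusion $\{\tau_{\Omega'}<\tau_\Omega\}\subset\{X_{\tau_{\Omega'}}\in\pa\Omega'\setminus\pa\Omega\}$, and extend from $K\subset\ft C$ to $K\subset\mathcal A(\ft C)$ with the Freidlin--Wentzell estimate~\eqref{eq.WFr-est}. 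You also correctly identify the two technical cruxes the paper handles with Proposition~\ref{Lau1} (Thom/Sard genericity to make $f|_{\pa\Omega'}$ Morse on the interior part of the boundary while fixing it near $\pa\ft C\cap\pa\Omega$) and with the choice of a small enough collar $\{f<\ft H+\epsilon\}$ (where $|\nabla f|\neq0$ on $\pa\ft C$ prevents absorbing another well).
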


\noindent
Theorem~\ref{thm.2} implies that when $ \ft C \in \mathcal C$
satisfies~\eqref{eq.cc1} (for instance, this is the case for~$\ft C_3$
on Figure~\ref{fig:shema_nota}),    the law of $X_{\tau_\Omega}$ when
$X_0=x\in \mathcal A(\ft C)$ concentrates when $h\to 0$ on  $\pa \ft C
\cap \pa \Omega$.

 Theorems~\ref{thm.main} and~\ref{thm.2} are actually special cases of a  more general result which will be stated and illustrated in Section~\ref{sec.gene_fin}  (see indeed Theorem~\ref{thm.4}). The proof of Theorem~\ref{thm.4} is a simple extension of the proof of Theorem~\ref{thm.2}. For pedagogical purposes, we prefer to first present Theorem~\ref{thm.main} and~\ref{thm.2}, before stating the more general and abstract result of Theorem~\ref{thm.4}.

%

\subsection{Organization of the paper}\label{sec:orga}

The rest of this paper is dedicated to the proofs of
Theorem~\ref{thm.main}, in Section~\ref{sec-xo-nuh2} and of
Theorems~\ref{thm.2} and~\ref{thm.4}, in Section~\ref{sec:proof_th2}. The proof of
Theorem~\ref{thm.main} heavily relies on results
from~\cite{DLLN-saddle1} (recalled in Theorem~\ref{thm.main-bis} below), together with a so-called leveling result on
$x \mapsto \mathbb E_x[F(X_{\tau_\Omega})]$. The proof of
Theorem~\ref{thm.2} uses Theorem~\ref{thm.main} applied to a domain
$\Omega_{\ft C}$ which contains $\ft C$. The construction of this
domain uses tools from differential topology related to the genericity
of Morse functions. Finally, Section~\ref{sec:conc} gives conclusions and
perspectives.

\section{Proof of Theorem~\ref{thm.main}} 
\label{sec-xo-nuh2}

After recalling some results from~\cite{DLLN-saddle1} in
Section~\ref{sec:prev}, we prove a so-called leveling result (as
initially introduced in~\cite{Day4})  in~$\ft C_{\ft{max}}$ for
$x\mapsto \mathbb E_x[F(X_{\tau_\Omega})]$, in
Section~\ref{sec.level}. Then, combining the results of these two
sections, one proves Theorem~\ref{thm.main} in
Section~\ref{sec:proofTh1} for a smooth function~$F$ and finally for a
measurable bounded $F$ in Section~\ref{sec:end_proof}.
 
\subsection{Previous results on the principal eigenfunction of $L^D_{f,h}$ and on $\nu_h$}\label{sec:prev}
Let us recall the following result from~\cite[Theorem 4]{DLLN-saddle1}
on the spectral gap  of $L^D_{f,h}$.  We recall that $\lambda_h$ is the
principal eigenvalue of $-L^D_{f,h}$ (see Section~\ref{sec:exit}).
\begin{proposition}
\label{pr.lambdah}
Assume that the assumptions \eqref{H-M}  and \eqref{eq.hip1}  are satisfied. 
Let us denote by  $f_{\ft{max}}$ the value of $f$ on $ \pa \ft
C_{\ft{max}}$ and $x_{\ft{max}}$ a minimum point of $f$ in ${\overline{\ft C_{\ft{max}}}}$:
\begin{equation}\label{xzmax}
 f \equiv  f_{\ft{max}} \text{ on }  \pa \ft C_{\ft{max}} \ \text{ and } \ x_{\ft{max}}\in \argmin_{\overline{\ft C_{\ft{max}}}}f.
\end{equation}  
 Notice that  $ f_{\ft{max}}=\max_{\overline{\ft C_{\ft{max}}} } f$
 and $x_{\ft{max}}\in \ft C_{\ft{max}}$. Then, there exists 
$C>1$ and $\gamma\in \mathbb R$,  such that, for $h$ small enough,
$$C^{-1}\,  h^{\gamma}\,  e^{-\frac 2h  (f_{\ft{max}}- f(x_{\ft{max}}))}  \le \lambda_h\le C\,  h^{\gamma}\,  e^{-\frac 2h  (f_{\ft{max}}- f(x_{\ft{max}}))}.$$
Moreover, there exists $c>0$ such that  for $h$ small enough, $\min \sigma(L^D_{f,h})\setminus\{\lambda_h\}\ge e^{\frac ch} \lambda_h$.  
%
%
%
\end{proposition}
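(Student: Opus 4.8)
The result is~\cite[Theorem~4]{DLLN-saddle1}; here is the plan of a proof. The first step is to conjugate $-L^D_{f,h}$ by the unitary map $v\mapsto e^{-\frac1h f}v$ from $L^2_w(\Omega)$ onto $L^2(\Omega)$, which reduces it to the semiclassical Witten--Schr\"odinger operator $-\tfrac h2\Delta+\tfrac1{2h}|\nabla f|^2-\tfrac12\Delta f$ with homogeneous Dirichlet conditions, whose principal eigenfunction is $e^{-\frac1h f}u_h$. Equivalently, by self-adjointness,
\begin{equation*}
\lambda_h=\inf_{v\in H^1_{w,0}(\Omega)\setminus\{0\}}\frac{\tfrac h2\int_\Omega|\nabla v|^2\,e^{-\frac2h f}}{\int_\Omega v^2\,e^{-\frac2h f}},
\end{equation*}
so that estimating $\lambda_h$ amounts to an exponentially small tunneling estimate localized in the well $\ft C_{\ft{max}}$. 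Assumption~\eqref{eq.hip1} enters precisely here: it guarantees that $\ft C_{\ft{max}}$ is the \emph{unique} well of maximal depth $f_{\ft{max}}-f(x_{\ft{max}})$ among the elements of $\mathcal C$, so that the slowest metastable mode is the one trapped in $\ft C_{\ft{max}}$ and every other exponentially small eigenvalue is governed by a strictly smaller depth.

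For the upper bound I would test the Rayleigh quotient against a quasimode $v_h$ equal to $1$ on $\{f\le f_{\ft{max}}-\delta\}\cap\ft C_{\ft{max}}$, cut off smoothly to $0$ across the transition layer $\{f_{\ft{max}}-\delta\le f\le f_{\ft{max}}\}$ around $\pa\ft C_{\ft{max}}$, with a WKB-type profile $\propto e^{-\frac1h(f-f_{\ft{max}})}$ near each (generalized) saddle point of $f$ on $\pa\ft C_{\ft{max}}$ realizing $\ft H_f=f_{\ft{max}}$ --- under~\eqref{eq.hip1} alone these may be interior saddles of $f$ in $\Omega$ or generalized saddles on $\pa\Omega$. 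Laplace's method then gives $\int_\Omega v_h^2\,e^{-\frac2h f}\sim c\,h^{d/2}e^{-\frac2h f(x_{\ft{max}})}$, the dominant contribution coming from a neighbourhood of $x_{\ft{max}}$, while $\nabla v_h$ is supported where $f\approx f_{\ft{max}}$, so that $\tfrac h2\int_\Omega|\nabla v_h|^2\,e^{-\frac2h f}\le C\,h^{\gamma+d/2}e^{-\frac2h f_{\ft{max}}}$ for an explicit $\gamma\in\mathbb R$ fixed by the local geometry at those saddle points (through the $\pa_n f$ and $\det\Hess f|_{\pa\Omega}$ at boundary saddles, as in~\eqref{ai}, or the unstable Hessian eigenvalue at interior ones) and at $x_{\ft{max}}$. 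This yields $\lambda_h\le C\,h^{\gamma}e^{-\frac2h(f_{\ft{max}}-f(x_{\ft{max}}))}$.

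For the matching lower bound I would use Agmon-type exponential decay estimates (equivalently, an IMS localization formula for the Witten--Schr\"odinger operator): $u_h$ concentrates near $x_{\ft{max}}$ on the scale of the Agmon distance, its behaviour across the saddles of $\pa\ft C_{\ft{max}}$ is the one of the quasimode above, and a Green-formula/flux computation across those saddles produces a lower bound of the same order $c^{-1}h^{\gamma}e^{-\frac2h(f_{\ft{max}}-f(x_{\ft{max}}))}$; absorbing the $o(1)$ into the constant gives the two-sided estimate with a single exponent $\gamma$. For the spectral gap I would construct, for every other well $\ft C\in\mathcal C$, a quasimode localized in $\ft C$ whose Rayleigh quotient is $\ge c\,h^{\gamma'}e^{-\frac2h(f_{\ft{max}}-f(x_{\ft{max}})-\delta_0)}$, where $\delta_0>0$ is the difference between the maximal depth and the second largest one (this again uses~\eqref{eq.hip1}); feeding these quasimodes, together with exponential decoupling of the wells, into a min--max argument bounds the second eigenvalue of $-L^D_{f,h}$ from below by this quantity, whence $\min\sigma(L^D_{f,h})\setminus\{\lambda_h\}\ge e^{\frac ch}\lambda_h$ for any $c\in(0,2\delta_0)$ and $h$ small.

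The delicate point is not the exponential rate --- that already follows from large deviations, e.g.\ from $\lambda_h^{-1}=\mathbb E_{\nu_h}[\tau_\Omega]$ and Freidlin--Wentzell estimates --- but the \emph{sharp} matching of the polynomial prefactor $h^{\gamma}$ in the upper and lower bounds, i.e.\ the Eyring--Kramers-type asymptotics of $\lambda_h$. Since $\pa_n f$ is not assumed positive on $\pa\Omega$ and $f$ may have interior critical points, this forces a careful quasimode construction near the generalized saddle points on $\pa\Omega$ (where the homogeneous Dirichlet condition makes $f$ behave as if extended by $-\infty$ outside $\overline\Omega$) and a precise control of the interaction between $\ft C_{\ft{max}}$ and the surrounding transition layer; this is the technical heart of~\cite{DLLN-saddle1}.
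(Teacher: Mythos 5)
The paper does not prove this statement: immediately before the proposition it writes ``Let us recall the following result from~\cite[Theorem 4]{DLLN-saddle1}'', so Proposition~\ref{pr.lambdah} is simply quoted, and the only ``proof'' in the present paper is that citation. Your proposal correctly identifies this, and your overall outline (conjugation to the Dirichlet Witten Laplacian, Rayleigh-quotient upper bound via a cutoff/WKB quasimode supported in $\ft C_{\ft{max}}$, Agmon-type localization plus a flux computation for the lower bound) is consistent with the framework the paper itself uses downstream: the proof of Proposition~\ref{pr.appro-lambda} relies on exactly the kind of cutoff quasimode $\chi$ on $\ft C_{\ft{max}}(\alpha)$ you describe, combined with Laplace's method, and Corollary~\ref{co.pr} (rank one spectral projector) is precisely the packaged form of your spectral-gap claim. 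So the strategy is the right one at a high level.

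There is, however, a genuine gap in your spectral-gap step. You propose to build, for each well $\ft C\neq\ft C_{\ft{max}}$, a quasimode localized in $\ft C$ ``whose Rayleigh quotient is $\ge c\,h^{\gamma'}e^{-\frac2h(f_{\ft{max}}-f(x_{\ft{max}})-\delta_0)}$'' and then ``feed these quasimodes into a min--max argument to bound the second eigenvalue from below.'' This has the inequalities pointing the wrong way: a quasimode localized in a well has a \emph{small} Rayleigh quotient (its magnitude is governed by the depth of that well), and by Courant--Fischer the Rayleigh quotient of a trial function gives an \emph{upper} bound for some eigenvalue, never a lower bound. Plugging trial quasimodes into the min--max cannot bound $\min\big(\sigma(L^D_{f,h})\setminus\{\lambda_h\}\big)$ from below. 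What is actually needed is the dual direction: fix the $\ft C_{\ft{max}}$-quasimode $\psi$ and prove that for every $v\perp\psi$ in $H^1_{w,0}(\Omega)$, the Dirichlet form $\frac h2\int_\Omega|\nabla v|^2 e^{-2f/h}$ is bounded below by (a constant times) $e^{-\frac2h(f_{\ft{max}}-f(x_{\ft{max}})-\delta_0)}\int_\Omega v^2 e^{-2f/h}$, via an IMS localization formula and the hypothesis \eqref{eq.hip1} that all other wells are strictly less deep; this yields that the spectral projector on $[0,\mu)$ has rank exactly one for a suitable threshold $\mu$, which is the content of Corollary~\ref{co.pr}. Without this reversal, your sketch does not establish the claim $\min\sigma(L^D_{f,h})\setminus\{\lambda_h\}\ge e^{c/h}\lambda_h$. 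The rest of your outline (upper bound, Agmon lower bound, the role of \eqref{eq.hip1}) is sound but, as you say, the uniform $h^\gamma$ prefactor requires the full quasimodal analysis of \cite{DLLN-saddle1}; nothing else in the sketch would need to change.
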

 \noindent
We now give a series of three results which are consequences of the
previous proposition. These results  can all be found
in~\cite{DLLN-saddle1}. We provide the proofs for these results since
they are short and they are opportunities to introduce  some  notation
which will be needed later on. 

A direct corollary of  Proposition~\ref{pr.lambdah} is the following. 
\begin{corollary}\label{co.pr}
Let us assume that the assumptions~\eqref{H-M} and \eqref{eq.hip1} are satisfied. Then, there exists $\beta_0>0$ such that for all $\beta\in (0,\beta_0)$, there exists $h_0>0$ such that  for all $h\in (0,h_0)$, the orthogonal projector in $L^2_w(\Omega)$ 
\label{page.tildephio}
$$ \pi_h:=\pi_{\big [0,e^{-\frac{2}{h}( f_{\ft{max}}- f(x_{\ft{max}})-\beta)}\big )}(L^{D}_{f,h}) \text{ has rank } 1.$$
\end{corollary}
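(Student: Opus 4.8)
The plan is to derive Corollary~\ref{co.pr} directly from the two-sided estimate on $\lambda_h$ and the spectral gap estimate $\min \sigma(L^D_{f,h})\setminus\{\lambda_h\}\ge e^{c/h}\lambda_h$ in Proposition~\ref{pr.lambdah}. The strategy is to show that for a suitably chosen threshold $\mu_h(\beta):=e^{-\frac{2}{h}(f_{\ft{max}}-f(x_{\ft{max}})-\beta)}$, the interval $[0,\mu_h(\beta))$ contains $\lambda_h$ but no other element of $\sigma(-L^D_{f,h})$, so that the spectral projector $\pi_h$ has rank exactly equal to the multiplicity of $\lambda_h$, which is $1$ since $\lambda_h$ is the non-degenerate principal eigenvalue.

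First I would fix notation by writing $a:=f_{\ft{max}}-f(x_{\ft{max}})>0$ (this is positive because $x_{\ft{max}}\in\ft C_{\ft{max}}$ is a minimum and $f_{\ft{max}}=\max_{\overline{\ft C_{\ft{max}}}}f$ is strictly larger, $\ft C_{\ft{max}}$ being a nonempty open connected component of a sublevel set). By Proposition~\ref{pr.lambdah}, there are constants $C>1$ and $\gamma\in\mathbb R$ with $C^{-1}h^\gamma e^{-2a/h}\le\lambda_h\le C h^\gamma e^{-2a/h}$ for $h$ small. I would then observe that for any $\beta\in(0,\beta_0)$ with $\beta_0>0$ to be chosen, and for $h$ small enough, one has $\lambda_h< \mu_h(\beta)=e^{-\frac{2}{h}(a-\beta)}$: indeed $\mu_h(\beta)/\lambda_h\ge C^{-1}h^{-\gamma}e^{2\beta/h}\to+\infty$ as $h\to0$, regardless of the sign of $\gamma$, because the exponential $e^{2\beta/h}$ dominates any power of $h$. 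Hence $\lambda_h\in[0,\mu_h(\beta))$ for $h$ small.

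Next I would show that no other point of $\sigma(-L^D_{f,h})$ lies in $[0,\mu_h(\beta))$, provided $\beta$ is small enough. By the spectral gap estimate, any eigenvalue $\lambda\in\sigma(-L^D_{f,h})\setminus\{\lambda_h\}$ satisfies $\lambda\ge e^{c/h}\lambda_h\ge C^{-1}h^\gamma e^{c/h}e^{-2a/h}=C^{-1}h^\gamma e^{-\frac{2}{h}(a-c/2)}$. Choosing $\beta_0:=c/4$ (or any value in $(0,c/2)$), we get for $\beta\in(0,\beta_0)$ that $\lambda/\mu_h(\beta)\ge C^{-1}h^\gamma e^{-\frac{2}{h}(a-c/2)+\frac{2}{h}(a-\beta)}=C^{-1}h^\gamma e^{\frac{2}{h}(c/2-\beta)}\to+\infty$, since $c/2-\beta>0$. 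Thus for $h$ small enough no such $\lambda$ lies below $\mu_h(\beta)$, so $\sigma(-L^D_{f,h})\cap[0,\mu_h(\beta))=\{\lambda_h\}$. Since $\lambda_h$ is a simple eigenvalue (Section~\ref{sec:exit}), the range of $\pi_h=\pi_{[0,\mu_h(\beta))}(L^D_{f,h})$ is the one-dimensional eigenspace $\mathbb R u_h$, which gives $\operatorname{rank}\pi_h=1$.

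There is no serious obstacle here: the only mild care needed is the uniformity in $\beta$ over the interval $(0,\beta_0)$ and the fact that $h_0$ may depend on $\beta$, which is exactly what the statement allows ("for all $\beta\in(0,\beta_0)$, there exists $h_0>0$..."). The one point to state cleanly is that $h^{\pm\gamma}$ is subexponential, i.e. $h^{\gamma}e^{\eta/h}\to\infty$ for any $\eta>0$ and any $\gamma\in\mathbb R$ as $h\to0^+$, which handles both comparisons above; this is the routine estimate underlying the whole argument.
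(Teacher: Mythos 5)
Your argument is correct and is precisely the ``direct corollary'' the paper leaves implicit: compare the threshold $\mu_h(\beta)=e^{-\frac{2}{h}(a-\beta)}$ on one side to $\lambda_h\asymp h^\gamma e^{-2a/h}$ (exponential in $\beta$ beats any $h^{\pm\gamma}$, so $\lambda_h<\mu_h(\beta)$ for $h$ small), and on the other side to the rest of the spectrum via the gap $e^{c/h}\lambda_h$ (so for $\beta<c/2$ every other eigenvalue exceeds $\mu_h(\beta)$ once $h$ is small). Your choice $\beta_0=c/4$ is valid (any $\beta_0\le c/2$ works), and the conclusion $\operatorname{rank}\pi_h=1$ follows from simplicity of $\lambda_h$; this is exactly the computation the authors intended when they called the statement a direct consequence of Proposition~\ref{pr.lambdah}.
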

\noindent
 Here and in the following,  for a Borel set $E\subset \mathbb
 R$,~$\pi_{E}(L^{D}_{f,h})$ is the spectral projector of $L^{D}_{f,h}$
on $E$.\\
For $\alpha>0$, one defines 
\begin{equation}\label{eq.c1a}
\ft C_{\ft{max}}(\alpha)= \ft C_{\ft{max}}\cap \big\{f<f_{\ft{max}}  -\alpha\big\}.  
\end{equation} 
For $\alpha>0$, let $\chi\in C^\infty(\Omega)$ be such that 
$$ \chi=0 \text{ on } \overline{\Omega}\setminus  \ft C_{\ft{max}}(\alpha) \text{ and } \chi=1 \text{ on }  \ft C_{\ft{max}}(2\alpha).$$
Using Corollary~\ref{co.pr}, we obtain a good approximation of $u_h$
using the function~$\chi$. 
\begin{proposition}
\label{pr.appro-lambda}
Let us assume that the assumptions~\eqref{H-M} and \eqref{eq.hip1} are satisfied.  
Let us define 
$$\psi := \frac{\chi}{\Vert \chi\Vert_{L^2_w(\Omega)}}.$$
Then, for all $\alpha>0$ small enough, there exists $c>0$ such that in the limit $h\to 0$:
$$u_h=  \psi \ +\ O(e^{-\frac ch}) \ \text{ in } L^2_w(\Omega).
$$
\end{proposition}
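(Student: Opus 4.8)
The strategy is to show that the normalized cutoff function $\psi$ is an excellent quasimode for $L^D_{f,h}$ associated with the principal eigenvalue $\lambda_h$, and then to invoke the spectral gap from Proposition~\ref{pr.lambdah} (via Corollary~\ref{co.pr}) to upgrade "quasimode" into "genuine approximation of $u_h$''. The key quantitative input is that $\chi$ is supported in $\overline{\ft C_{\ft{max}}(\alpha)}$ and equals $1$ on $\ft C_{\ft{max}}(2\alpha)$, so its gradient is supported in the thin shell $\ft C_{\ft{max}}(\alpha)\setminus \ft C_{\ft{max}}(2\alpha)$ where $f$ is bounded below by $f_{\ft{max}}-2\alpha$; meanwhile the $L^2_w$-mass of $\chi$ is dominated by a neighborhood of $x_{\ft{max}}$ where $f$ is close to $f(x_{\ft{max}})$. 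This exponential separation of scales is what produces the gain $e^{-c/h}$.

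\textbf{Step 1: the Dirichlet form of $\psi$.} Since $-L^D_{f,h}$ is unitarily equivalent (through multiplication by $e^{-f/h}$) to a Witten-type Laplacian, one has the Dirichlet form identity
$$\langle -L^D_{f,h}\,\psi,\psi\rangle_{L^2_w(\Omega)} = \frac h2\,\frac{\int_\Omega |\nabla \chi|^2\, e^{-\frac2h f}}{\int_\Omega \chi^2\, e^{-\frac2h f}},$$
valid because $\chi$ is compactly supported in $\Omega$ (so no boundary term appears and $\psi\in \mathcal D(L^D_{f,h})$). For the numerator, $|\nabla\chi|$ is supported where $f\ge f_{\ft{max}}-2\alpha$, hence the numerator is $O\big(e^{-\frac2h(f_{\ft{max}}-2\alpha)}\big)$. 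For the denominator, Laplace's method at the nondegenerate minimum $x_{\ft{max}}$ gives a lower bound of order $h^{d/2}e^{-\frac2h f(x_{\ft{max}})}$. Therefore
$$0\le \langle -L^D_{f,h}\,\psi,\psi\rangle_{L^2_w(\Omega)} \le C\, h^{1-d/2}\, e^{-\frac2h\left(f_{\ft{max}}-2\alpha - f(x_{\ft{max}})\right)} \le e^{-\frac2h\left(f_{\ft{max}}- f(x_{\ft{max}})\right) + \frac{c_0}{h}}$$
for a constant $c_0$ proportional to $\alpha$; taking $\alpha$ small makes $c_0$ small.

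\textbf{Step 2: projection onto the principal eigenspace.} Write $\psi = \pi_h \psi + (1-\pi_h)\psi$, where $\pi_h$ is the rank-one projector of Corollary~\ref{co.pr}. On the range of $1-\pi_h$, the operator $-L^D_{f,h}$ is bounded below by the second eigenvalue, which by Proposition~\ref{pr.lambdah} is $\ge e^{c/h}\lambda_h \ge e^{c/h}\,C^{-1}h^\gamma e^{-\frac2h(f_{\ft{max}}-f(x_{\ft{max}}))}$. Hence
$$\|(1-\pi_h)\psi\|_{L^2_w(\Omega)}^2 \le \frac{\langle -L^D_{f,h}\,\psi,\psi\rangle_{L^2_w(\Omega)}}{\min\sigma(L^D_{f,h})\setminus\{\lambda_h\}} \le C\, h^{-\gamma}\, e^{-\frac{c}{h}}\, e^{\frac{c_0}{h}}.$$
Choosing $\alpha$ small enough that $c_0 < c$ yields $\|(1-\pi_h)\psi\|_{L^2_w(\Omega)} = O(e^{-c'/h})$ for some $c'>0$. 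Since $u_h$ spans the range of $\pi_h$ and $\|u_h\|_{L^2_w}=\|\psi\|_{L^2_w}=1$, we get $\pi_h\psi = \pm u_h + O(e^{-c'/h})$; the sign is $+$ because both $u_h>0$ and $\chi\ge0$ on $\Omega$ (so $\langle \psi,u_h\rangle_{L^2_w}>0$, and in fact $=1+O(e^{-c'/h})$ after expanding $1=\|\psi\|^2 = \|\pi_h\psi\|^2+\|(1-\pi_h)\psi\|^2$). Combining, $u_h = \psi + O(e^{-c'/h})$ in $L^2_w(\Omega)$, which is the claim.

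\textbf{Main obstacle.} The delicate point is bookkeeping the exponential rates so that the loss $c_0(\alpha)$ incurred in Step~1 (from widening the support of $\chi$ past the exact level $f_{\ft{max}}$, and from the polynomial Laplace prefactors $h^{1-d/2}$, $h^{-\gamma}$) is strictly smaller than the spectral-gap gain $c$ from Proposition~\ref{pr.lambdah}; this forces $\alpha$ to be chosen small \emph{after} $c$ is fixed, and one must check the polynomial prefactors cannot spoil a purely exponential estimate — they cannot, since $h^{-N} \le e^{c_0/h}$ for any $c_0>0$ and $h$ small. A secondary technical point is justifying the Dirichlet form identity and that $\psi\in\mathcal D(L^D_{f,h})$, which is immediate from $\chi\in C_c^\infty(\Omega)$ and the explicit conjugation of $L^D_{f,h}$ to a Schrödinger operator.
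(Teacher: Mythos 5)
Your proposal is correct and follows essentially the same route as the paper: the only cosmetic difference is that you invoke the spectral gap bound $\min\sigma\setminus\{\lambda_h\}\ge e^{c/h}\lambda_h$ from Proposition~\ref{pr.lambdah} directly when estimating $\|(1-\pi_h)\psi\|$, whereas the paper runs the quadratic-form inequality $\|\pi_{[b,\infty)}u\|^2\le (h/2)\,\|\nabla u\|_{L^2_w}^2/b$ against the explicit threshold $b=e^{-\frac{2}{h}(f_{\ft{max}}-f(x_{\ft{max}})-\beta)}$ of Corollary~\ref{co.pr} and then chooses $\alpha<\beta/4$ — an equivalent bookkeeping since that corollary is itself a consequence of the same spectral gap. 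Your final step recovering $u_h$ from $\pi_h\psi$ via positivity and unit normalization matches the paper's argument verbatim.
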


\begin{proof}
Let us recall that for all $u\in H^1_{w,0}(\Omega)$ and $b>0$,
$$\left\Vert \pi_{[b,+\infty)} (L^D_{f,h}) \, u\right\Vert^2 \leq \frac{\frac h2\int_{\Omega} \vert \nabla u\vert ^2e^{-\frac 2h f} }{b}.$$
Thus, one has:
\begin{equation}\label{eq.nablav0}
  \big \|    (1-   \pi_h )       \psi  \big\|_{L^2_w(\Omega)}^2  \le  e^{\frac{2}{h}(f_{\ft{max}}- f(x_{\ft{max}})-\beta )} \, \frac h2\big\|   \nabla        \psi   \big\|_{L^2_w(\Omega)}^2.
  \end{equation} 
Moreover, by definition of the function $\psi$, it holds,
\begin{equation}\label{eq.nablav1}
\big\|   \nabla        \psi   \big\|_{L^2_w(\Omega)}^2=\frac{  \int_\Omega   \vert \nabla \chi  \vert ^2 e^{-\frac 2h f} }{  \int_\Omega  \chi^2 e^{-\frac 2h f}}.
\end{equation} 
Since $\chi=0 \text{ on } \overline{\Omega} \setminus \ft C_{\ft{max}}(\alpha)$, it holds,  $ \int_\Omega  \chi^2 e^{-\frac 2h f}=  \int_{\ft C_{\ft{max}}(\alpha)}   \chi^2 e^{-\frac 2h f}$. 
Since $f$ has finite number of global minima in $\overline{ \ft C_{\ft{max}}}$ which are all included in $ \ft C_{\ft{max}}$, one deduces that for all  $\alpha>0$ small enough, 
\begin{equation}\label{eq.cmax-arg}
\argmin_{\overline{ \ft C_{\ft{max}}}}f \subset \ft C_{\ft{max}}(2\alpha).
 \end{equation}
Consequently, using a Laplace's method together with the fact that $\chi=1 \text{ on }  \ft C_{\ft{max}}(2\alpha)$, one has in the limit $h\to 0$, 
\begin{equation}\label{eq.laplace}
 \int_\Omega  \chi^2 e^{-\frac 2h f}= h^{\frac{d}{2} }\, \pi^{\frac{d}{2} }e^{-\frac{2}{h} f(x_{\ft{max}})} 
 \sum_{x\in\argmin_{ \ft C_{\ft{max}}}f}  \big( {\rm det \ Hess } f   (x)   \big)^{-\frac12}\ \big(1+O(h) \big).
 \end{equation}
 Since the support of $\nabla \chi$ is included in $\overline{\ft
   C_{\ft{max}}(\alpha)}\setminus \ft C_{\ft{max}}(2\alpha)$ (thus
 $f\ge f_{\ft{max}}-2\alpha$ on the support of
 $\nabla \chi$ 
    it holds for some $C>0$:
 $$ \int_\Omega   \vert \nabla \chi  \vert ^2 e^{-\frac 2h f}\le C\, h^{1-\frac d2} \, e^{-\frac 2h (f_{\ft{max}}-2\alpha)}. $$
Plugging these estimates in~\eqref{eq.nablav1} and using~\eqref{eq.nablav0}, one finally deduces that for  
$$  \|    (1-  \pi_h )        \psi\|_{L^2_w(\Omega)}^2\le C\, e^{-\frac 2h(\beta-2\alpha)}.$$
Choosing $\alpha<\beta/4$, this implies that for $h$ small enough, $
\big \|    (1-   \pi_h )        \psi\big\|_{L^2_w(\Omega)}\le
e^{-\frac{\beta}{ h}}$. Therefore,   using Corollary~\ref{co.pr} and the fact that
 the functions $ u_{h}$ and $\psi$ are non negative,
$$
u_h= \frac{\   \pi_h \,     \psi}{\|     \pi_h \,    \psi \|_{L^2_w(\Omega)}}=\     \psi+O ( e^{-\frac{c}{h}})\ \text{  in  } L^2_w(\Omega)
$$
for some positive $c$. This concludes the proof of  Proposition~\ref{pr.appro-lambda}. 
\end{proof}

\noindent
We end this section, by the following consequence of Proposition~\ref{pr.appro-lambda}. 
\begin{corollary}\label{co.moyenne}
Let us assume that the assumptions~\eqref{H-M} and \eqref{eq.hip1} are satisfied. Let us moreover assume that 
$$\min_{\overline{\ft C_{\ft{max}}}}f= \min_{\overline \Omega}f.$$
Then, when $h\to 0$:
$$
\int  _{\Omega} u_h \ e^{- \frac{2}{h} f } = 
h^{\frac{d}{4} }\, \pi^{\frac{d}{4} }e^{-\frac{1}{h} \min_{\overline \Omega}f} 
  \Big(\sum_{x\in\argmin_{ \ft C_{\ft{max}}}f}  \big( {\rm det \ Hess } f   (x)   \big)^{-\frac12}\Big)^{\frac12} \ \big(1+O(h) \big).
$$
Moreover, if $\ft O$ is an open subset of $\Omega$ such that $\overline{\ft O}\cap \argmin_{ \ft C_{\ft{max}}}f=\emptyset$, then, there exists $c>0$ such that for $h$ small enough, $$\int  _{\ft O} u_h \ e^{- \frac{2}{h} f }=O\big(e^{-\frac{1}{h} (\min_{\overline \Omega}f+c)} \big).$$
\end{corollary}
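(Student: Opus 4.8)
The plan is to derive Corollary~\ref{co.moyenne} directly from Proposition~\ref{pr.appro-lambda} by computing $\int_\Omega u_h\, e^{-\frac2h f}$ via the approximation $u_h = \psi + O(e^{-c/h})$ in $L^2_w(\Omega)$. First I would write, using Cauchy--Schwarz in $L^2_w(\Omega)$,
\begin{equation*}
\int_\Omega u_h\, e^{-\frac2h f} = \int_\Omega \psi\, e^{-\frac2h f} + \int_\Omega (u_h-\psi)\, e^{-\frac2h f},
\end{equation*}
where the error term is bounded by $\|u_h-\psi\|_{L^2_w(\Omega)}\,\|1\|_{L^2_w(\Omega)} = O(e^{-c/h})\cdot O(h^{d/4}e^{-\frac1h\min_{\overline\Omega}f})$, since $\|1\|_{L^2_w(\Omega)}^2 = \int_\Omega e^{-\frac2h f} = h^{d/2}\pi^{d/2}e^{-\frac2h\min_{\overline\Omega}f}(\text{something})(1+O(h))$ by Laplace's method (using $\min_{\overline\Omega}f = \min_{\overline{\ft C_{\ft{max}}}}f$ and that the global minima of $f$ in $\overline\Omega$ attained in $\overline{\ft C_{\ft{max}}}$ lie in $\ft C_{\ft{max}}$). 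So the error is subexponentially smaller than the main term, hence absorbed into it.

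Next I would compute the main term $\int_\Omega \psi\, e^{-\frac2h f} = \frac{1}{\|\chi\|_{L^2_w(\Omega)}}\int_\Omega \chi\, e^{-\frac2h f}$. The denominator $\|\chi\|_{L^2_w(\Omega)}$ is exactly the square root of the quantity computed in~\eqref{eq.laplace}:
\begin{equation*}
\|\chi\|_{L^2_w(\Omega)} = h^{d/4}\pi^{d/4}e^{-\frac1h f(x_{\ft{max}})}\Big(\sum_{x\in\argmin_{\ft C_{\ft{max}}}f}\big({\rm det\ Hess\,}f(x)\big)^{-1/2}\Big)^{1/2}(1+O(h)).
\end{equation*}
For the numerator, since $\chi$ is supported in $\ft C_{\ft{max}}(\alpha)$, equals $1$ on $\ft C_{\ft{max}}(2\alpha)$, and (by~\eqref{eq.cmax-arg}) all global minima of $f$ restricted to $\overline{\ft C_{\ft{max}}}$ lie in $\ft C_{\ft{max}}(2\alpha)$, a Laplace's method argument gives $\int_\Omega \chi\, e^{-\frac2h f} = h^{d/2}\pi^{d/2}e^{-\frac2h f(x_{\ft{max}})}\sum_{x}({\rm det\ Hess\,}f(x))^{-1/2}(1+O(h))$, where the sum is over $\argmin_{\ft C_{\ft{max}}}f$ and where I use $f(x_{\ft{max}}) = \min_{\overline{\ft C_{\ft{max}}}}f = \min_{\overline\Omega}f$. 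Dividing numerator by denominator yields precisely
\begin{equation*}
\int_\Omega \psi\, e^{-\frac2h f} = h^{d/4}\pi^{d/4}e^{-\frac1h\min_{\overline\Omega}f}\Big(\sum_{x\in\argmin_{\ft C_{\ft{max}}}f}\big({\rm det\ Hess\,}f(x)\big)^{-1/2}\Big)^{1/2}(1+O(h)),
\end{equation*}
which is the claimed asymptotics.

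For the second statement about $\int_{\ft O} u_h\, e^{-\frac2h f}$ with $\overline{\ft O}\cap\argmin_{\ft C_{\ft{max}}}f=\emptyset$, I would again split as $\int_{\ft O}\psi\, e^{-\frac2h f} + \int_{\ft O}(u_h-\psi)e^{-\frac2h f}$. The error term is $O(e^{-c/h})\cdot\|1\|_{L^2_w(\Omega)} = O(e^{-\frac1h(\min_{\overline\Omega}f + c')})$ as before. For $\int_{\ft O}\psi\, e^{-\frac2h f}$: either $\ft O$ is disjoint from the support of $\chi$ (then this integral vanishes), or more generally on $\overline{\ft O}$ one has $\inf_{\overline{\ft O}} f \ge \min_{\overline{\ft C_{\ft{max}}}}f + \delta$ for some $\delta>0$ --- this uses that $\overline{\ft O}$ avoids the finite set $\argmin_{\ft C_{\ft{max}}}f$, compactness of $\overline{\ft O}$, and that inside $\overline{\ft C_{\ft{max}}}$ the only minimizers of $f$ are in $\argmin_{\ft C_{\ft{max}}}f$ (points of $\overline{\ft O}$ outside $\overline{\ft C_{\ft{max}}}$ are outside the support of $\chi$, so contribute nothing). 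Hence $\int_{\ft O}\psi\, e^{-\frac2h f} \le \frac{C}{\|\chi\|_{L^2_w(\Omega)}}h^{d/2}e^{-\frac2h(\min_{\overline\Omega}f+\delta)} = O(h^{d/4}e^{-\frac1h(\min_{\overline\Omega}f + 2\delta)})$, which is of the desired form after shrinking the constant $c$.

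The only mildly delicate point is making the Laplace-method bookkeeping rigorous near the boundary of the support of $\chi$ and confirming the lower bound $\inf_{\overline{\ft O}\cap\overline{\ft C_{\ft{max}}}}f > \min_{\overline{\ft C_{\ft{max}}}}f$; but these follow from compactness and the fact (recalled after~\eqref{mathcalC-def}) that $\ft C_{\ft{max}}$ is open in $\Omega$ together with the Morse assumption guaranteeing finitely many, isolated, nondegenerate minima. No step should present a genuine obstacle: the corollary is essentially a combination of Proposition~\ref{pr.appro-lambda} with the Laplace asymptotics already carried out in the proof of that proposition.
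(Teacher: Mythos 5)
Your proposal is correct and follows essentially the same route as the paper's proof: Cauchy--Schwarz in $L^2_w$ to replace $u_h$ by $\psi=\chi/\|\chi\|_{L^2_w}$ up to an exponentially small remainder, Laplace's method on $\int_\Omega\chi\,e^{-2f/h}$ (the $\chi$-analogue of~\eqref{eq.laplace}) for the first asymptotic, and the exponential separation $\inf_{\overline{\ft O}\cap\operatorname{supp}\chi}f>\min_{\overline\Omega}f$ for the second. There is a harmless slip in the last bound (the factor should be $h^{-d/4}$, not $h^{d/4}$, after dividing by $\|\chi\|_{L^2_w}$), but since the exponential dominates any polynomial power of $h$ the conclusion is unaffected.
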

\begin{proof}
Let $\ft O$ be an open subset of $\Omega$. 
 Using Proposition~\ref{pr.appro-lambda} and thanks to the Cauchy-Schwarz inequality, one obtains  in the limit $h\to 0$:
\begin{align}
\nonumber
\int  _{\ft O}u_h \ e^{-\frac{2}{h} f} &= \int  _{\ft O}  \     \psi  \,e^{-\frac{2}{h} f} + O ( e^{-\frac{c}{h}})  \sqrt{  \int  _{\ft O}  e^{-\frac{2}{h} f} }= \int  _{\ft O}  \     \psi  \,e^{-\frac{2}{h} f} + O \Big ( e^{-\frac{1}{h}\,  (\min_{\overline{\Omega}} f + c  )}\Big).
\end{align}
Then, the first statement in Corollary~\ref{co.moyenne} follows choosing $\ft O=\Omega$ in the previous equality and  using~\eqref{eq.laplace} (notice that the same estimate holds replacing $\chi^2$ by  $\chi$ in~\eqref{eq.laplace}), the definition of $\psi$ (see~Proposition~\ref{pr.appro-lambda}) together with the fact that by assumption  $f(x_{\ft{max}})=\min_{\overline{\ft C_{\ft{max}}}}f= \min_{\overline \Omega}f$. The second statement in Corollary~\ref{co.moyenne} follows using~\eqref{eq.laplace}  and the fact that when $\overline{\ft O}\cap \argmin_{ \ft C_{\ft{max}}}f=\emptyset$, there exists $c>0$ such that for $h$ small enough,  $ \int  _{\ft O}  \     \chi  \,e^{-\frac{2}{h} f}= O \big ( e^{-\frac{2}{h}\, (\min_{\overline{\Omega}} f + c )}\big)$. 
\end{proof}

\noindent
We end this section by recalling the results of~\cite[Theorem 1]{DLLN-saddle1} on the law of $X_{\tau_{\Omega}}$ when $X_0$ is initially distributed according to the quasi-stationary distribution $\nu_h$ of the process~\eqref{eq.langevin}. 

\begin{theorem}\label{thm.main-bis}
Let us assume  that the assumptions \eqref{H-M},~\eqref{eq.hip1},~\eqref{eq.hip2}, and~\eqref{eq.hip3} are satisfied. 
  Let  $F\in L^{\infty}(\partial \Omega,\mathbb R)$ and $(\Sigma_{i})_{i\in\{1,\dots,\ft k_{1}^{\pa \Omega}\}}$ be a family  of pairwise disjoint open subsets of~$\pa \Omega$ such that 
  $$\text{for all } i\in\big \{1,\dots,\ft k_{1}^{\pa \Omega}\big \},  \ z_{i}\in \Sigma_{i},$$ 
  where we recall that $\big \{z_1,\dots,z_{\ft k_{1}^{\pa\Omega}}\big \}= \ft U_1^{\pa \Omega}\cap \argmin_{\pa \Omega} f$ (see~\eqref{eq.z11}).     
    Then:
  \begin{enumerate} 
  \item 
There exists $c>0$ such that when  $h\to 0$:
\begin{equation} \label{eq.t1-bis}
\mathbb E_{\nu_h} \left [ F\left (X_{\tau_{\Omega}} \right )\right]=\sum \limits_{i=1}^{\ft k_1^{\pa \Omega}}
\mathbb E_{\nu_h} \left [ (\mathbf{1}_{\Sigma_{i}}F)\left (X_{\tau_{\Omega}} \right )\right]  +O\big (e^{-\frac ch}\big )
 \end{equation}
 and
 \begin{equation} \label{eq.t2-bis}
\sum \limits_{i=\ft k_1^{\pa \ft C_{\ft{max}} }+1}^{\ft k_1^{\pa \Omega}}
\mathbb E_{\nu_h} \left [ (\mathbf{1}_{\Sigma_{i}}F)\left (X_{\tau_{\Omega}} \right )\right]  =O\big (h^{\frac14} \big ),
 \end{equation}
 where we recall that $\big \{z_1,\ldots,z_{\ft k_1^{\pa \ft C_{\ft{max}} }}\big \}=\pa \ft C_{\ft{max}}\cap \pa \Omega$ (see~\eqref{eq.k1-paCmax}). 
\item When for some $i\in\big \{1,\dots,\ft k_{1}^{\pa \ft C_{\ft{max}} }\big \}$ the function  $F$ is $C^{\infty}$ in a neighborhood  of $z_{i}$, one has when $h\to 0$:
\begin{equation} \label{eq.t3-bis}
 \mathbb E_{\nu_h} \left [ (\mathbf{1}_{\Sigma_{i}}F)\left (X_{\tau_{\Omega}} \right )\right]=F(z_i)\,a_{i} +O(h^{\frac14}) \ \text{ where $a_i$ is defined by~\eqref{ai}.}
\end{equation} 
  \item 
When~\eqref{eq.hip4} is satisfied 
 the remainder term  $O( h^{\frac14})$ in \eqref{eq.t2}
 is   of the order  $O\big (e^{-\frac{c}{h}}\big )$ for some $c>0$  
and
the remainder term  $O\big (h^{\frac14}\big )$  in \eqref{eq.t3} is of the order $O(h)$  and admits a full asymptotic expansion in~$h$. 
\end{enumerate} 
 
\end{theorem}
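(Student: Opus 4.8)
The first step reduces the statement to the boundary behaviour of the principal eigenfunction $u_h$. Let $w(x):=\mathbb E_x[F(X_{\tau_\Omega})]$, so that $L_{f,h}w=0$ in $\Omega$ and $w=F$ on $\partial\Omega$. Since $L_{f,h}$ is symmetric with respect to $e^{-\frac 2h f}\,dx$ and $u_h$ is its principal Dirichlet eigenfunction, Green's formula (using $u_h=0$ on $\partial\Omega$) relates $\int_\Omega w\,u_h\,e^{-\frac 2h f}$ to the boundary integral of $F\,\partial_n u_h\,e^{-\frac 2h f}$; applying this both to $F$ and to $F\equiv 1$, taking the quotient, and using the expression~\eqref{eq.expQSD} of $\nu_h$, one obtains
\begin{equation*}
\mathbb E_{\nu_h}\big[F(X_{\tau_\Omega})\big]=\frac{\int_\Omega w\,u_h\,e^{-\frac 2h f}}{\int_\Omega u_h\,e^{-\frac 2h f}}=\frac{\int_{\partial\Omega}F\,\partial_n u_h\,e^{-\frac 2h f}\,d\sigma}{\int_{\partial\Omega}\partial_n u_h\,e^{-\frac 2h f}\,d\sigma}\,,
\end{equation*}
where $\partial_n u_h\le 0$ on $\partial\Omega$, so that numerator and denominator are, up to sign, positive boundary measures. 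Everything is thus reduced to the asymptotics as $h\to 0$ of the measure $|\partial_n u_h|\,e^{-\frac 2h f}\,d\sigma$ on $\partial\Omega$.

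\textbf{Localisation of the boundary measure.} On $\partial\Omega$ the density $e^{-\frac 2h f}$ concentrates exponentially on $\argmin_{\partial\Omega}f$. Combining this with a priori Agmon-type decay estimates for $u_h$ away from $\argmin_{\overline{\ft C_{\ft{max}}}}f$ — which follow from the approximation $u_h\simeq\psi$ of Proposition~\ref{pr.appro-lambda}, the localisation of the quasimode $\psi$ in $\ft C_{\ft{max}}$, and standard elliptic/Agmon machinery for $L^D_{f,h}$ — one shows that $|\partial_n u_h|\,e^{-\frac 2h f}\,d\sigma$ is, up to a relative error $O(e^{-c/h})$, carried by arbitrarily small neighbourhoods of $\ft U_1^{\partial\Omega}\cap\argmin_{\partial\Omega}f=\{z_1,\dots,z_{\ft k_1^{\partial\Omega}}\}$: indeed, at a point of $\argmin_{\partial\Omega}f$ which is also a local minimum of $f$ on $\overline\Omega$ (equivalently where $\partial_n f<0$), such a point is separated from $\ft C_{\ft{max}}$ by a barrier of positive height, so $u_h$, hence $\partial_n u_h$, is exponentially small there. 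This gives~\eqref{eq.t1-bis}.

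\textbf{Local contribution of the saddles $z_i\in\partial\ft C_{\ft{max}}\cap\partial\Omega$.} By~\eqref{eq.hip3} these lie in $\argmin_{\partial\Omega}f$, and by Proposition~\ref{pr.lambdah} one has $f\equiv f_{\ft{max}}$ on $\partial\ft C_{\ft{max}}$, so $f(z_i)=f_{\ft{max}}=\min_{\partial\Omega}f$ for all $i\le\ft k_1^{\partial\ft C_{\ft{max}}}$. In boundary normal coordinates $(s,y)$ near $z_i$ ($s\ge 0$ the distance to $\partial\Omega$), a boundary-layer/WKB construction for the eigenvalue equation for $u_h$, matched in the interior to $u_h\simeq\psi$ from Proposition~\ref{pr.appro-lambda} and normalised by~\eqref{eq.norma}, shows that $u_h$ behaves near $z_i$ like $g_h(y)\big(1-e^{-\frac 2h\partial_n f(z_i)\,s}\big)$ plus lower-order terms, hence $\partial_n u_h(z_i)\sim-\tfrac 2h\,\partial_n f(z_i)\,g_h(z_i)$, where — because $f$ is constant on $\partial\ft C_{\ft{max}}$ — the leading coefficient $g_h(z_i)$ equals, up to a factor independent of $i$, a common power of $h$ times $e^{-\frac 1h(f_{\ft{max}}-f(x_{\ft{max}}))}$. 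Applying Laplace's method to the Morse function $f|_{\partial\Omega}$ on $\Sigma_i$ then produces the factor $(\pi h)^{(d-1)/2}/\sqrt{\det\Hess f|_{\partial\Omega}(z_i)}$; in the quotient above, the $i$-independent prefactor, the power of $h$ and $e^{-\frac 2h\min_{\partial\Omega}f}$ all cancel, leaving exactly the weights $a_i$ of~\eqref{ai}. For $F$ smooth near $z_i$ this is~\eqref{eq.t3-bis}, with error $O(h^{1/4})$ in general; when~\eqref{eq.hip4} holds, the WKB/quasimode construction can be carried out to all orders along minimal paths with no competing marginal saddle, improving the remainder to $O(h)$ with a full asymptotic expansion.

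\textbf{The remaining saddles, and the main obstacle.} The delicate point is~\eqref{eq.t2-bis}: the $z_j$ with $\ft k_1^{\partial\ft C_{\ft{max}}}<j\le\ft k_1^{\partial\Omega}$ belong to $\argmin_{\partial\Omega}f$ — so the weight $e^{-\frac 2h f}$ does not make them negligible a priori — but lie outside $\overline{\ft C_{\ft{max}}}$, and in general the barrier separating them from $\ft C_{\ft{max}}$ is only marginally above $f_{\ft{max}}-f(x_{\ft{max}})$. Bounding the contribution of a neighbourhood $\ft O$ of such a $z_j$ by Cauchy--Schwarz, $\big|\int_{\ft O}u_h\,e^{-\frac 2h f}\big|\le\|u_h\|_{L^2_w}\big(\int_{\ft O}e^{-\frac 2h f}\big)^{1/2}$ (as in Corollary~\ref{co.moyenne}), produces at this critical energy level a square-root loss which, combined with the $\lambda_h$ asymptotics of Proposition~\ref{pr.lambdah}, accounts for the polynomial order $O(h^{1/4})$ rather than an exponential gain. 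Under~\eqref{eq.hip4}, any minimal-energy path from $\ft C_{\ft{max}}$ to $\partial\Omega$ stays in $\overline{\ft C_{\ft{max}}}$, so $\ft C_{\ft{max}}$ is separated from every such $z_j$ by a barrier of strictly positive height, $u_h$ is exponentially small near $z_j$, and the $O(h^{1/4})$ becomes $O(e^{-c/h})$; likewise the remainder in~\eqref{eq.t3-bis} improves to $O(h)$ with a full expansion. The genuinely hard part of the whole argument is the rigorous construction of these quasimodes/WKB approximations near the boundary saddles and the control of their remainders, carried out using the spectral gap of Proposition~\ref{pr.lambdah}, together with the careful bookkeeping distinguishing exponential from merely polynomial smallness — this is where the bulk of the work of~\cite{DLLN-saddle1} lies.
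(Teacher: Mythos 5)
First, a structural point: this paper does not prove Theorem~\ref{thm.main-bis} at all — it is quoted verbatim from \cite[Theorem 1]{DLLN-saddle1} and used as a black box — so there is no internal proof to compare against. Judged against the proof in that companion work, your opening reduction is exactly right: Green's formula applied to $w=\mathbb E_\cdot[F(X_{\tau_\Omega})]$ (which satisfies $L_{f,h}w=0$) and to the Dirichlet eigenfunction $u_h$ gives $\int_\Omega w\,u_h\,e^{-\frac 2h f}=\frac{h}{2\lambda_h}\int_{\partial\Omega}F\,\partial_n u_h\,e^{-\frac 2h f}\,d\sigma$, and hence the quotient formula for $\mathbb E_{\nu_h}[F(X_{\tau_\Omega})]$ in terms of the boundary measure $|\partial_n u_h|\,e^{-\frac 2h f}\,d\sigma$. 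You have also correctly diagnosed why the generic remainder is only $O(h^{1/4})$ and why~\eqref{eq.hip4} upgrades it to $O(e^{-c/h})$.

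The gaps are in the analytic core, and they are not merely omitted details. (i) Both the localisation step and the evaluation of the local contributions require pointwise (trace) control of $\partial_n u_h$ on $\partial\Omega$. Proposition~\ref{pr.appro-lambda} only gives $u_h=\psi+O(e^{-c/h})$ in $L^2_w(\Omega)$, and since $\psi$ is supported in $\ft C_{\ft{max}}(\alpha)\subset\subset\Omega$ it vanishes identically near $\partial\Omega$: the entire boundary measure you need to estimate sits inside the error term of that approximation, so "the approximation $u_h\simeq\psi$ plus standard elliptic/Agmon machinery" cannot deliver the claimed localisation. The actual proof works with $\nabla u_h$, viewed as an eigenform of a Witten-type Laplacian on $1$-forms with tangential Dirichlet conditions, constructs quasimodes attached to each $z_i$ and each interior separating saddle, and proves Agmon estimates at the level of $1$-forms; this is an additional layer of machinery, not a routine consequence of what is recalled in Section~\ref{sec:prev}. (ii) The boundary-layer ansatz $u_h\approx g_h(y)\big(1-e^{-\frac 2h\partial_n f(z_i)s}\big)$, with $g_h(z_i)$ equal to an $i$-independent prefactor times the claimed exponential, is precisely the statement to be proved; as written it is asserted, and a formal matching argument gives no control of the remainder, whereas the theorem requires quantitative errors ($O(h^{1/4})$, and $O(h)$ with a full expansion under~\eqref{eq.hip4}). (iii) For~\eqref{eq.t2-bis}, your Cauchy–Schwarz mechanism is applied to the volume integral $\int_{\ft O}u_h\,e^{-\frac 2h f}$ as in Corollary~\ref{co.moyenne}, but the quantity to be bounded is the boundary integral of $\partial_n u_h\,e^{-\frac 2h f}$ near $z_j$; the square-root loss you identify does occur, but in the decomposition of $\nabla u_h$ over the $1$-form quasimodes, not in the place you put it. In short, the skeleton coincides with that of \cite{DLLN-saddle1}, but the part you defer — which you yourself flag as "the bulk of the work" — is the theorem, and the substitutes you propose for it would not close the argument.
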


\subsection{Leveling results}\label{sec.level}

To go from Theorem~\ref{thm.main-bis} to Theorem~\ref{thm.main}, the basic idea is to write
$$\mathbb E_{\nu_h}[F(X_{\tau_\Omega})]=\int_{\Omega} \mathbb
E_x[F(X_{\tau_\Omega})] \nu_h(dx)
$$
and to use the fact that the function $x\mapsto \mathbb
E_x[F(X_{\tau_\Omega})]$ is ``more and more constant'' as $h \to 0$:
this is called a {\em leveling property}.

\begin{definition}\label{level0}
Let $K$ be a compact subset of $\Omega$ and $F\in C^{\infty}(\partial \Omega,\mathbb R)$. We say that $x\mapsto \mathbb E_x[F(X_{\tau_\Omega})]$ satisfies a leveling property on $K$ if  
\begin{equation}  \label{eq.leveling}
\lim_{h\to 0}\big \vert \,\mathbb E_{x} \left [ F\left (X_{\tau_{\Omega}} \right )\right]-\mathbb E_{y} \left [ F\left (X_{\tau_{\Omega}} \right )\right] \, \big  \vert=0
\end{equation} 
and this limit holds uniformly with respect to $(x,y)\in K\times K$.
\end{definition}
\noindent
The leveling property~\eqref{eq.leveling} has been widely studied in the literature in various geometrical settings, see for example  \cite{Per,Kam,DeFr,Day4,FrWe,Eiz,kamin1979elliptic}. 
We prove the following proposition which is a leveling property in our framework. 

\begin{proposition} \label{level}
Let us assume that the assumption \eqref{H-M} holds. Let $\lambda \in \mathbb R$ and $\ft C$ be a connected component of $\{f<\lambda\}$ such that $\ft C\subset \Omega$. Then, for any path-connected compact set $K\subset\ft C$ and for any $F\in  C^{\infty}(\partial \Omega,\mathbb R)$, there exist $c>0$ and $M>0$, such that for all $(x,y)\in K\times K$,
\begin{equation}\label{eq.LP}
\big \vert \,\mathbb E_{x} \left [ F\left (X_{\tau_{\Omega}} \right )\right]-\mathbb E_{y} \left [ F\left (X_{\tau_{\Omega}} \right )\right] \, \big \vert \leq M e^{-\frac{c}{h}}.
\end{equation}
\end{proposition}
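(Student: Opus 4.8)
The plan is to establish the estimate \eqref{eq.LP} via an elliptic regularity / harmonic-function argument combined with an exponential small-ball estimate coming from large deviations. Recall that the function $w(x) := \mathbb E_x[F(X_{\tau_\Omega})]$ solves the Dirichlet problem $L_{f,h} w = 0$ in $\Omega$, $w = F$ on $\pa\Omega$; in particular $w$ is smooth in $\Omega$ and $\|w\|_{L^\infty(\Omega)} \le \|F\|_{L^\infty(\pa\Omega)}$. The key point is that $v := e^{-\frac 2h f} \nabla w$ is (up to a constant) divergence-free, so $\nabla w$ is controlled in $\ft C$ by the geometry of the sublevel set. Concretely, first I would fix a slightly larger value $\lambda' \in (\lambda, \lambda'')$ and let $\ft C'$ be the connected component of $\{f < \lambda'\}$ containing $\ft C$, chosen so that $\overline{\ft C}\subset \ft C' \subset \overline{\ft C'}\subset\Omega$ and $\ft C'$ contains no critical value of $f$ in the shell $\{\lambda \le f \le \lambda'\}$ other than possibly inside $\ft C$ — this is possible because $f$ is Morse, hence has finitely many critical values, by \eqref{H-M}.

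The main step is a Caccioppoli-type / De Giorgi–Nash–Moser estimate with explicit control of the $h$-dependence. Writing the equation as $\div\big(e^{-\frac 2h f}\nabla w\big) = 0$, a weighted energy estimate on the annular region $\ft C'\setminus \ft C$ gives
\begin{equation}\label{eq.cacc}
\int_{\ft C} e^{-\frac 2h f}|\nabla w|^2 \;\le\; \frac{C}{\dist(\ft C,\pa\ft C')^2}\int_{\ft C'\setminus\ft C} e^{-\frac 2h f}\, w^2 \;\le\; \frac{C\,\|F\|_\infty^2}{\dist(\ft C,\pa\ft C')^2}\int_{\ft C'\setminus\ft C} e^{-\frac 2h f}.
\end{equation}
Since $f \ge \lambda$ on $\ft C'\setminus\ft C$ while $\min_{\overline{\ft C}} f < \lambda$, the right-hand side of \eqref{eq.cacc} is $O\big(e^{-\frac 2h(\lambda - \min_{\overline{\ft C}}f - \epsilon)}\big)$ for any small $\epsilon$, whereas any fixed compact $K\subset\ft C$ sits in a region where $f \le \lambda - \delta$ for some $\delta>0$; feeding \eqref{eq.cacc} back into an interior elliptic estimate (Schauder or Moser, applied on balls of fixed radius contained in $\ft C$, with coefficients $e^{-\frac 2h f}$ whose oscillation on such a ball is controlled) yields $\sup_{x\in K'}|\nabla w(x)| \le M e^{-\frac ch}$ on any path-connected compact $K \subset K' \Subset \ft C$, for suitable $c>0$. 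Integrating $\nabla w$ along a path in $K$ joining $x$ to $y$ (using path-connectedness of $K$, with uniformly bounded path length) then gives \eqref{eq.LP}. Alternatively, one can avoid the gradient estimate entirely and argue probabilistically: decompose $w(x) - w(y)$ using a coupling of the two processes before they leave $\ft C$ together with the strong Markov property at the first time the coupled processes meet, and bound the probability that they have not yet met and one of them has left $\ft C$ by a large-deviation estimate of the form $\mathbb P_x[\tau_{\ft C} \le T] = O(e^{-c/h})$ for $x$ in the interior and $T$ fixed but large — this uses Freidlin–Wentzell lower bounds for the quasipotential on $\pa\ft C$, which is bounded below by $\lambda - \min_{\overline{\ft C}}f$.

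The main obstacle I anticipate is making the $h$-dependence in the elliptic estimate genuinely uniform and correctly tracking the exponential rate: the weight $e^{-\frac 2h f}$ degenerates badly as $h\to 0$, so a naive application of standard elliptic estimates produces constants blowing up exponentially in $\frac 1h$, and one must be careful that the exponential gain from $f|_{\ft C'\setminus\ft C}\ge\lambda$ strictly beats the exponential losses. The cleanest route is probably to rescale: on a fixed ball $B$ in $\ft C$ the coefficient $e^{-\frac 2h(f - f(x_B))}$ has the form $e^{O(1/h)}$ with controlled $\log$-gradient $\frac 2h|\nabla f|$, and the natural object is the drifted generator; one uses that $w$ restricted to $\ft C$ is, after the ground-state-type transformation, a solution of a uniformly elliptic equation whose Harnack constant can be shown to grow only sub-exponentially. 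I would also need to confirm that $\dist(\ft C,\pa\ft C')>0$, which follows since $\overline{\ft C}$ is compact in the open set $\ft C'$ — this in turn uses that $\ft C\subset\Omega$ is a connected component of a sublevel set with $\lambda$ not a critical value issue handled by the Morse assumption. I expect the probabilistic coupling argument to be the more robust and shorter path, with the analytic argument above serving as a sanity check.
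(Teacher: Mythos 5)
Your overall strategy---rewriting the PDE for $v_h(x)=\mathbb E_x[F(X_{\tau_\Omega})]$ in divergence form $\div\big(e^{-\frac 2h f}\nabla v_h\big)=0$, getting a weighted energy estimate on sublevel sets, and converting the exponential weight mismatch between the boundary of the sublevel set and the compact $K$ into an exponentially small gradient bound---is exactly the route the paper takes. But there are two concrete gaps, one minor and one central.

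The minor one concerns your choice of the outer neighborhood. You enlarge $\ft C$ to the connected component $\ft C'$ of $\{f<\lambda'\}$, $\lambda'>\lambda$, and assert $f\ge\lambda$ on $\ft C'\setminus\ft C$. This can fail: if $\partial\ft C$ contains a separating saddle at level $\lambda$ (which the Morse assumption allows), then for \emph{every} $\lambda'>\lambda$ the component $\ft C'$ absorbs another connected component of $\{f<\lambda\}$ across that saddle, so $\ft C'\setminus\ft C$ contains points with $f$ well below $\lambda$ and the exponential gain you claim is lost. The paper avoids this by moving \emph{inward} rather than outward: it works with $\ft C_r=\ft C\cap\{f<\lambda-r\}$ and integrates by parts (Green's formula on the smooth domain $\ft C_{r/2}$, boundary term on $\{f=\lambda-\tfrac r2\}\cap\ft C$), which produces cleanly $\int_{\ft C_{r/2}}|\nabla v_h|^2 e^{-\frac 2h f}\le C h^{-\alpha}e^{-\frac 2h(\lambda-\frac r2)}$ and then $\int_{\ft C_r}|\nabla v_h|^2\le C h^{-\alpha}e^{-\frac rh}$. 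A Caccioppoli cutoff whose gradient is supported in the \emph{inner} annulus $\ft C_r\setminus\ft C_{2r}$ would also work; the problem is only with the outer annulus $\ft C'\setminus\ft C$.

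The central gap is the passage from the exponentially small $L^2$ gradient bound to an $L^\infty$ one. You correctly flag this as the main obstacle but do not give a workable mechanism. The rescaling/Harnack idea as stated does not close the gap: on any fixed ball $B\Subset\ft C$ the coefficient $e^{-\frac 2h f}$ has oscillation ratio $e^{\frac 2h\,\mathrm{osc}_B f}$, which is genuinely $e^{O(1/h)}$, so the De Giorgi--Nash--Moser or Harnack constants on fixed balls do blow up exponentially and cannot be ``shown to grow only sub-exponentially'' without a new idea. The paper's resolution is of a different nature and has two ingredients. First, in a preliminary step it establishes that all Sobolev norms $\|v_h\|_{H^k(\Omega)}$ are at most \emph{polynomial} in $h^{-1}$, by bootstrapping the reformulation $\Delta v_h=\frac 2h\nabla f\cdot\nabla v_h$ and using $\|v_h\|_{L^\infty}\le\|F\|_{L^\infty}$ from the maximum principle together with an energy estimate; each bootstrap step costs a factor $h^{-1}$ but no more. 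Second, it interpolates via the Gagliardo--Nirenberg inequality: applied with cutoffs on nested $\ft C_{2^kr}$, it combines the exponentially small $\|\nabla v_h\|_{L^2(\ft C_r)}$ with the $O(1)$-bounded $\|v_h\|_{H^2}$ (obtained from the polynomial bounds after the exponential localization) to produce an exponentially small $\|\nabla v_h\|_{L^{p_1}}$, then iterates until reaching $L^\infty$. The key point is that the exponential smallness survives each Gagliardo--Nirenberg step because an exponential dominates any polynomial loss. This interpolation argument is the missing mechanism in your sketch.

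Finally, your proposed probabilistic alternative (coupling inside $\ft C$ plus a Freidlin--Wentzell lower bound on the escape probability from $\ft C$) is plausible in spirit, but left at the level of a one-line sketch; it would require constructing an explicit coupling whose meeting time has probability bounded away from $0$ uniformly in $h$ before the FW escape window closes, which is a separate nontrivial task and not the route the paper takes.
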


\begin{proof}
 The proof is inspired from techniques used in \cite{DeFr}.  The proof of Proposition~\ref{level} is divided into two steps. 
In the following $C>0$ is a constant which can change from one occurrence to another and which does not depend on $h$.

\medskip
 \noindent
 \textbf{Step 1.}
 Let $F\in  C^{\infty}(\partial \Omega, \mathbb R)$. Let us denote by~$v_h\in H^1(\Omega)$ the unique weak solution to the elliptic boundary value problem
\begin{equation} \label{vh}
\left\{
\begin{aligned}
 \frac{h}{2}  \ \Delta v_h -\nabla f \cdot \nabla v_h  &=  0    \ {\rm on \ }  \Omega  \\ 
v_h&= F \ {\rm on \ } \partial \Omega .\\
\end{aligned}
\right.
\end{equation}
Then, we will prove in this step that~$v_h$ belongs to $ C^{\infty}(\overline \Omega, \mathbb R)$ and  that  for all $k\in \mathbb N$, there exist $C>0$,~$n \in \mathbb N$ and $h_0>0$ such that for all $h\in (0,h_0)$, it holds
\begin{equation} \label{est1}
\Vert v_h\Vert_{H^{k+2}(\Omega)}\leq \frac{C}{h^n} \left ( \Vert \nabla v_h\Vert_{L^2(\Omega)}+1\right).
\end{equation} 
Here $L^2(\Omega)= \big \{u:\Omega \to \mathbb R,  \int_\Omega u^2   < \infty \big \}$, and, for $k\ge 1$, 
$$H^k(\Omega)= \big \{u:\Omega \to \mathbb R,  \forall \alpha\in \mathbb N^k \text{ such that } \vert \alpha\vert \le k, \  \int_\Omega \vert \pa^{\alpha}u\vert ^2 < \infty \big \}.$$ 
Moreover, the Dynkin's formula implies that\label{page.vh}
\begin{equation} \label{eq.vh-dynkin}
\forall x\in \overline \Omega, \ v_h(x)=\mathbb E_{x} \left [F\left ( X_{\tau_{\Omega}} \right)\right].
\end{equation}
Let us prove that $v_h$ belongs to $ C^{\infty}(\overline \Omega, \mathbb R)$ and~\eqref{est1}. Since $F$ is $C^\infty$, for all $k\geq 1$, there exists $\widetilde F\in H^k(\Omega)$ such that $\widetilde F=F$ on $\pa \Omega$ and $$\Vert \widetilde F\Vert_{H^k}\le C\Vert F\Vert_{H^{k-\frac 12}(\pa \Omega)}.$$
From~\eqref{vh}, the function $\widetilde v_h=v_h-\widetilde F\in H^1(\Omega)$   is the weak solution to
\begin{equation} \label{est1-2}
\left\{
\begin{aligned}
  \Delta \widetilde v_h &=  \frac{2}{h}  \nabla f \cdot \nabla v_h-\Delta \widetilde F      \ {\rm on \ }  \Omega  \\ 
\widetilde v_h&= 0 \ {\rm on \ } \partial \Omega .
\end{aligned}
\right.
\end{equation}
Thus, using~\cite[Theorem 5, Section 6.3]{Eva},~$ \widetilde v_h\in H^2(\Omega)$ (and thus $v_h\in H^2(\Omega)$)  and there exist $C>0$ and $h_0>0$ such that for all $h\in (0,h_0)$
\begin{align}  
\nonumber
  \Vert \widetilde v_h\Vert_{H^2(\Omega)}&\leq C \left (  \frac 1h \Vert \nabla v_h \Vert_{L^2(\Omega)}  +\Vert  \widetilde F \Vert_{H^2(\Omega)} \right) \leq \frac Ch \Big ( \Vert \nabla v_h \Vert_{L^2(\Omega)}+\Vert  F \Vert_{H^{\frac 32}(\pa \Omega)}\Big),
\end{align} 
and thus
\begin{equation} \label{est-H2}
\Vert v_h\Vert_{H^2(\Omega)} \le \frac Ch \left (  \Vert \nabla v_h \Vert_{L^2(\Omega)}+ 1\right).
\end{equation} 
This proves~\eqref{est1} for $k=0$. The inequality~\eqref{est1} is
then obtained by a bootstrap argument, by induction on $k$. This implies by Sobolev embeddings that  $v_h$ belongs to $ C^{\infty}(\overline \Omega, \mathbb R)$. 

 Let us  now prove that there exist $\alpha>0$ and $C>0$ such that 
\begin{equation} \label{est2}
\Vert v_h\Vert_{L^{\infty}(\Omega)}+ \Vert \nabla v_h\Vert_{L^{\infty}(\Omega)} \leq C h^{-\alpha}.
\end{equation}
  Notice that from~\eqref{eq.vh-dynkin}, one has that for all
  $h>0$,~$\Vert v_h\Vert_{L^{\infty}(\Omega)}\leq \Vert
  F\Vert_{L^\infty(\pa \Omega)}$. Using this bound,~\eqref{vh} and~\eqref{est-H2}, there exists $C>0$ such that for any $\ve>0$ and $\ve'>0$,
\begin{align*} 
h \int  _{\Omega} \vert  \nabla v_h \vert^2   &\leq C \left (h \int_{\partial \Omega} \vert F\, \partial_n v_h \vert \, d\sigma+ \Vert F\Vert_{L^\infty(\pa \Omega)}\int  _{\Omega} \vert \nabla f \cdot \nabla v_h  \vert   \right)\\
&\leq C \left (h \frac{\Vert F\Vert_{L^2(\partial \Omega)}^2}{4\ve} +  \, h  \, \ve\, \Vert v_h\Vert_{H^2(\Omega)}^2 + \frac{\Vert \nabla f\Vert_{L^2( \Omega)}^2}{4\ve'} +   \ve'\, \Vert \nabla v_h\Vert_{L^2(\Omega)}^2  \right)\\
&\leq C  \, h \frac{\Vert F\Vert_{L^2(\partial \Omega)}^2}{4\ve} +  C  \, h  \, \ve\, C_1\, h^{-2}\left ( \Vert \nabla v_h\Vert_{L^2(\Omega)}^2+ 1\right) \\
&\quad + C  \,  \frac{\Vert \nabla f\Vert_{L^2( \Omega)}^2}{4\ve'} +C  \,    \ve'\, \Vert \nabla v_h\Vert_{L^2(\Omega)}^2.
\end{align*}
Choosing $\ve= \frac{h^2}{4(CC_1+1)}$ and $\ve'= \frac{h}{4(C+1)}$ we get 
$$\Vert \nabla v_h\Vert_{L^{2}(\Omega)} \leq  \frac{C}{h}.$$
Therefore, using~\eqref{est1}, one obtains that for all $k\geq 0$, there exist $C>0$,~$n\in \mathbb N$ and $h_0>0$ such that for all $h\in (0,h_0)$
  $$\Vert v_h\Vert_{H^{k}(\Omega)} \leq  \frac{C}{h^n}.$$
 Let $k\geq 0$ such that $k-\frac{d}{2}>1$. Then, one obtains~\eqref{est2} from the continuous Sobolev injection $H^k(\Omega)\subset W^{1,\infty}(\Omega)$. 

\medskip
 \noindent
 \textbf{Step 2.}  
Let us assume that~\eqref{H-M} holds. Let $\lambda \in \mathbb R$ and $\ft C$ be a connected component of $\{f<\lambda\}$ such that $\ft C\subset \Omega$. Let us now define the set $\ft C_r$ by \label{page.cr}
\begin{equation}\label{pr.cr}
\ft C_r=\{f<\lambda-r\}\cap\ft C\subset \Omega
\end{equation}
which is not empty  and $C^{\infty}$  for all $r\in (0,r_1)$, for some $r_1>0$. Indeed, the boundary of $\ft C_r$ is   the set $\{f=\lambda-r\}\cap \ft C$ (since $\ft C\subset \Omega$)  which contains no critical points of~$f$  for $r\in (0,r_1)$, with  $r_1>0$ small enough (since there is  a finite number of critical points under the assumption \eqref{H-M}). 
In this step, we will prove that for all $r_0\in (0,r_1)$ there exists $\alpha_0>0$ such that 
\begin{equation}\label{est3}
\Vert \nabla v_h\Vert_{L^{\infty}(\ft C_{r_0})} \leq  e^{-\frac{\alpha_0}{h}}.
\end{equation}
Equation~\eqref{est3} implies that for any compact subset $K$ of $\ft C$, there exist $c>0$ and $C>0$  such that 
$$\forall (x,y)\in K\times K, \  \vert v_h(x)-v_h(y)\vert \leq C e^{-\frac ch},$$
which will then conclude the proof
of~\eqref{eq.LP}. This follows from the fact that there exists $r_0\in (0,r_1)$ such that  $K\subset \ft C_{r_0}$.\\
Let us now prove~\eqref{est3}. Let $r$ be such that $2^nr=r_0$ where $n\in \mathbb N^*$ will be fixed later (since $r\le r_0$,~$r\in (0,r_1)$). Equation~\eqref{vh} rewrites 
$$
\left\{
\begin{aligned}
 {\rm div}\, \big (e^{-\frac{2}{h}f} \nabla v_h \big) &=  0    \ {\rm on \ }  \Omega  \\ 
v_h&= F \ {\rm on \ } \partial \Omega.\\
\end{aligned}
\right.
$$
 Using~\eqref{est2},  there exist $C>0$  and $\alpha>0$ such that,
$$
\left \vert \, \,  \int_{\ft C_{r/2}} \vert  \nabla v_h \vert^2 \, e^{-\frac{2}{h}f}\,  \right \vert = \left \vert \ \int_{\partial \ft C_{r/2}} \, e^{-\frac{2}{h}f} v_h\, \partial_n v_h  \, d\sigma \right\vert \leq \frac{C}{h^{\alpha}}\, e^{-\frac{2}{h}(\lambda-\frac r2)},
$$
where we used  the Green formula (valid since $\ft C_r$ is $C^{\infty}$  for all $r\in (0,r_1)$) and the inclusion $\partial \ft C_{r/2}\subset \{f=\lambda-\frac r2\}$. 
In addition, since $\ft C_r\subset \ft C_{r/2}$ it holds,
\begin{align*} e^{-\frac{2}{h}(\lambda-r)}\, \int_{\ft C_{r}} \vert  \nabla v_h \vert^2 \,  \leq \int_{\ft C_{r}} \vert  \nabla v_h \vert^2 \, e^{-\frac{2}{h}f}\,  
&\leq \int_{\ft C_{r/2}} \vert  \nabla v_h \vert^2 \, e^{-\frac{2}{h}f}\leq \frac{C}{h^{\alpha}}\ e^{-\frac{2}{h}(\lambda-\frac r2)}.
\end{align*}
Therefore, there exists $\beta>0$ such that for $h$ small enough,
$$\int_{\ft C_{r}} \vert  \nabla v_h \vert^2 \,  \leq \frac{C}{h^{\alpha}}\, e^{-\frac{r}{h}}\leq  C\,e^{-\frac{\beta}{h}},$$
and from~\eqref{vh}, we then have $\Vert \Delta  v_h\Vert_{L^{2}(\ft C_{r})}\leq C\, e^{-\frac{\beta}{h}}$ for some constant  $\beta>0$.  In the following,~$\beta>0$ is a  constant which may change from one occurrence to another and does not depend on $h$. Let $\chi_1 \in C_c^{\infty}(\ft C_r)$ be such that $\chi_1 \equiv 1$ on $\ft C_{2r}$.  Since $\Delta (\chi_1 v_h)=\chi_1 \, \Delta v_h+v_h\, \Delta \chi_1 +2\nabla \chi_1 \cdot \nabla v_h$, there exists $C$, such that $\Vert \Delta (\chi_1 v_h)\Vert_{L^{2}(\ft C_{r})}\leq C$ for $h$ small enough. By  elliptic regularity (see~\cite[Theorem 5, Section 6.3]{Eva}) it comes 
$$\Vert  v_h\Vert_{H^{2}(\ft C_{2r})}\leq C.$$
Let $\alpha\in (0,1)$ be an irrational number such that  $p_1=\frac{2d}{d-2\alpha}>0$. From the Gagliardo-Nirenberg interpolation inequality (see \cite[Lecture II]{Nir}), the following inequality holds
$$
\Vert \nabla v_h\Vert_{L^{p_1}(\ft C_{2r})} \leq C \Vert v_h\Vert_{H^{2}(\ft C_{2r})}^\alpha\Vert \nabla v_h\Vert_{L^{2}(\ft C_{2r})}^{1-\alpha}+C\Vert \nabla v_h\Vert_{L^{2}(\ft C_{2r})}\leq C\, e^{-\frac{\beta}{h}}. 
$$
From~\eqref{vh},~$\Vert \Delta  v_h\Vert_{L^{p_1}(\ft C_{2r})}\leq C\, e^{-\frac{\beta}{h}}$. Using a cutoff  function $\chi_2 \in C_c^{\infty}(\ft C_{2r})$ such that $\chi_2 \equiv 1$ on $\ft C_{4r}$, we get, as previously, from the elliptic regularity $\Vert  v_h\Vert_{W^{2,p_1}(\ft C_{4r})}\leq C$. Let $p_2=\frac{2d}{d-4\alpha}$ (i.e. $1/p_2=1/p_1-\alpha/d$). If $p_2<0$, then~\cite[Lecture II]{Nir} implies
$$
\Vert \nabla v_h\Vert_{L^{\infty}(\ft C_{4r})} \leq C \Vert v_h\Vert_{W^{2,p_1}(\ft C_{4r})}^\alpha\Vert \nabla v_h\Vert_{L^{p_1}(\ft C_{4r})}^{1-\alpha}+C\Vert \nabla v_h\Vert_{L^{p_1}(\ft C_{4r})}\leq C\, e^{-\frac{\beta}{h}}. 
$$
Thus,~\eqref{est3} is proved  (if one  chooses $n=2$, i.e. $2^2r=r_0$). Otherwise, we prove~\eqref{est3} by induction as follows. 
From the Gagliardo-Nirenberg interpolation inequality (see \cite[Lecture II]{Nir}), we get
$$
\Vert \nabla v_h\Vert_{L^{p_2}(\ft C_{4r})} \leq C \Vert v_h\Vert_{W^{2,p_1}(\ft C_{4r})}^\alpha\Vert \nabla v_h\Vert_{L^{p_1}(\ft C_{4r})}^{1-\alpha}+C\Vert \nabla v_h\Vert_{L^{p_1}(\ft C_{4r})}\leq C\, e^{-\frac{\beta}{h}}. 
$$ 
We repeat this procedure $n$ times where $n$ is the first integer such that $d-2n\alpha< 0$ and  the Gagliardo-Nirenberg interpolation inequality  implies that $\Vert \nabla v_h\Vert_{L^{\infty}(\ft C_{2^nr})} \leq  C\, e^{-\frac{\beta}{h}}$ which ends the proof of~\eqref{est3}.  This concludes the proof of Proposition~\ref{level}. 
\end{proof}

\subsection{Link between the law of $X_{\tau_\Omega}$ when $X_0\sim \nu_h$ and $X_0=x\in \mathcal A(\ft C_{\ft{max}})$}\label{sec:proofTh1}

Using Proposition~\ref{level}, one can now compare $\mathbb E_{\nu_h}
\left [ F\left (X_{\tau_{\Omega}}\right)\right ]$ and $\mathbb E_{x}
\left [ F\left (X_{\tau_{\Omega}}\right)\right ]$ for smooth functions
$F$: the next proposition combined with Theorem~\ref{thm.main-bis} already gives
the result of Theorem~\ref{thm.main} for smooth functions~$F$.

 \begin{proposition}\label{pr.exp-qsd-x}
Assume that the assumptions \eqref{H-M}  and \eqref{eq.hip1}  are
satisfied and that 
$$\min_{\overline{\ft   C_{\ft{max}}} }f= \min_{\overline \Omega}f,$$
where we recall that  $\ft C_{\ft{max}}$ is introduced in~\eqref{eq.hip1}.  
  Let $K$ be  a compact subset of $\Omega$ such that $K\subset \mathcal A(\ft   C_{\ft{max}})$ and let $F\in C^{\infty}(\partial \Omega,\mathbb R)$. Then, there exists $ c>0$ such that for all $x\in K$:
$$
\mathbb E_{\nu_h}  \left [ F\left (X_{\tau_{\Omega}}\right)\right ]=\mathbb E_{x}  \left [ F\left (X_{\tau_{\Omega}}\right)\right ]  +O\big (e^{-\frac{c}{h} }\big )
$$
in the limit $h \to 0$ and uniformly in~$x \in K$.
\end{proposition}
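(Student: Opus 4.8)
The plan is to combine the exponential leveling of $v_h(y):=\mathbb E_y[F(X_{\tau_\Omega})]$ inside $\ft C_{\ft{max}}$ (Proposition~\ref{level}) with the fact, coming from Corollary~\ref{co.moyenne}, that $\nu_h$ charges, up to an exponentially small error, only a small neighbourhood of $\argmin_{\overline{\ft C_{\ft{max}}}}f$. Since the starting point $x\in K$ is only assumed to lie in the basin $\mathcal A(\ft C_{\ft{max}})$ and not in $\ft C_{\ft{max}}$ itself, the first task is to transport it into $\ft C_{\ft{max}}$ along the deterministic flow. Throughout, recall that $\|v_h\|_{L^\infty(\Omega)}\le\|F\|_{L^\infty(\partial\Omega)}$ by~\eqref{eq.vh-dynkin}, and write $m:=\min_{\overline{\ft C_{\ft{max}}}}f=\min_{\overline\Omega}f$.

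Geometric set-up. For $x\in\mathcal A(\ft C_{\ft{max}})$ one has $t_x=+\infty$, so $\varphi_s(x)\in\Omega$ for all $s\ge0$, and $\omega(x)$ is a single critical point lying in $\ft C_{\ft{max}}$; since $f$ is non-increasing along the flow and $\partial\ft C_{\ft{max}}\subset\{f=f_{\ft{max}}\}$, the trajectory $(\varphi_s(x))_{s\ge0}$ enters the open set $\ft C_{\ft{max}}$ in finite time and stays there afterwards. By continuity of the flow and compactness of $K$, one gets a single time $T>0$ and radius $\delta>0$ such that, for every $x\in K$, the $\delta$-tube $\{\,|y-\varphi_s(x)|\le\delta,\ 0\le s\le T\,\}$ stays in $\Omega$ and $\overline{B(\varphi_T(x),\delta)}\subset\ft C_{\ft{max}}$. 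I would then fix an open neighbourhood $\ft O$ of $\argmin_{\overline{\ft C_{\ft{max}}}}f$ with $\overline{\ft O}\subset\ft C_{\ft{max}}$, and enlarge the compact set $\overline{\ft O}\cup\bigcup_{x\in K}\overline{B(\varphi_T(x),\delta)}$ to a path-connected compact set $\tilde K\subset\ft C_{\ft{max}}$ (possible since $\ft C_{\ft{max}}$ is open and connected, hence path-connected). Proposition~\ref{level}, applied with $\lambda=f_{\ft{max}}$, the connected component $\ft C_{\ft{max}}$ of $\{f<f_{\ft{max}}\}$ (which satisfies $\ft C_{\ft{max}}\subset\Omega$) and the compact $\tilde K$, then provides $c,M>0$ with $|v_h(a)-v_h(b)|\le Me^{-c/h}$ for all $a,b\in\tilde K$.

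Leveling at $x\in K$ and integration against $\nu_h$. Fix a reference point $y_\star\in\ft O$. With $A_x:=\{\sup_{0\le s\le T}|X_s-\varphi_s(x)|<\delta\}$, the Freidlin--Wentzell estimate for the probability of leaving a tube around a deterministic trajectory, uniform in the starting point over the compact $K$ (compare~\eqref{eq.WFr-est}), gives $\mathbb P_x(A_x^c)\le e^{-c/h}$ for $h$ small, uniformly in $x\in K$. On $A_x$ one has $\tau_\Omega>T$ and $X_T\in\tilde K$, whence $|v_h(X_T)-v_h(y_\star)|\le Me^{-c/h}$; applying the Markov property at time $T$ and bounding the contribution of $A_x^c$ and of $\{\tau_\Omega\le T\}$ by $\|F\|_\infty e^{-c/h}$, one obtains
$$v_h(x)=\mathbb E_x[\mathbf 1_{A_x}v_h(X_T)]+O(e^{-c/h})=v_h(y_\star)\,\mathbb P_x(A_x)+O(e^{-c/h})=v_h(y_\star)+O(e^{-c/h}),$$
uniformly in $x\in K$. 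Finally, writing $\mathbb E_{\nu_h}[F(X_{\tau_\Omega})]=\int_\Omega v_h\,d\nu_h$ and splitting over $\ft O$ and $\Omega\setminus\ft O$: on $\ft O\subset\tilde K$ one has $v_h=v_h(y_\star)+O(e^{-c/h})$, while on $\Omega\setminus\ft O$ one uses $|v_h|\le\|F\|_\infty$ together with $\nu_h(\Omega\setminus\ft O)=O(e^{-c/h})$. This last bound is exactly Corollary~\ref{co.moyenne}: its second assertion, applied to an open set whose closure avoids $\argmin_{\overline{\ft C_{\ft{max}}}}f$ and contains $\Omega\setminus\ft O$ (e.g. $\Omega\setminus\overline{\ft O_0}$ for a slightly smaller neighbourhood $\ft O_0$ of $\argmin_{\overline{\ft C_{\ft{max}}}}f$), bounds the numerator $\int_{\Omega\setminus\ft O}u_he^{-\frac2hf}$ by $O(e^{-\frac1h(m+c)})$, whereas its first assertion — and this is where the hypothesis $\min_{\overline{\ft C_{\ft{max}}}}f=\min_{\overline\Omega}f$ is used — shows that the normalization $\int_\Omega u_he^{-\frac2hf}$ is bounded below by a positive constant times $h^{d/4}e^{-m/h}$. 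Hence $\nu_h(\ft O)=1+O(e^{-c/h})$ and $\mathbb E_{\nu_h}[F(X_{\tau_\Omega})]=v_h(y_\star)+O(e^{-c/h})$, which combined with the previous display yields $\mathbb E_{\nu_h}[F(X_{\tau_\Omega})]=v_h(x)+O(e^{-c/h})=\mathbb E_x[F(X_{\tau_\Omega})]+O(e^{-c/h})$, uniformly in $x\in K$.

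The main obstacle is the transport step: Proposition~\ref{level} only compares $v_h$ at points lying in one and the same connected component of a sublevel set, whereas here $x$ ranges over the larger basin $\mathcal A(\ft C_{\ft{max}})$. Routing through the deterministic flow together with a large-deviation tube estimate is what preserves the exponential smallness of the error, and the delicate point is to make $T$, $\delta$, the path-connected compact $\tilde K$, and the large-deviation bound all uniform over the compact set $K$; once this is done, everything else reduces to Proposition~\ref{level}, Corollary~\ref{co.moyenne} and the Markov property.
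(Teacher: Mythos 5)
Your proposal is correct and follows essentially the same strategy as the paper's proof: the leveling result (Proposition~\ref{level}) on a path-connected compact subset of $\ft C_{\ft{max}}$, a Freidlin--Wentzell tube estimate to transport $x\in K\subset\mathcal A(\ft C_{\ft{max}})$ into $\ft C_{\ft{max}}$ in a fixed finite time with exponentially small probability of escape (with uniformity over $K$ by compactness), and Corollary~\ref{co.moyenne} to show that $\nu_h$ concentrates near $\argmin_{\overline{\ft C_{\ft{max}}}}f$ at the exponential scale. The only difference is the order in which the two halves are assembled — you first identify $v_h(x)$ with a reference value $v_h(y_\star)$ via the Markov property and then integrate against $\nu_h$, whereas the paper first proves $\mathbb E_{\nu_h}[F(X_{\tau_\Omega})]=v_h(y)+O(e^{-c/h})$ for $y$ ranging over the compact sublevel set $\overline{\ft C_{\ft{max}}(\alpha)}$ (its Step~1) and then performs the large-deviation transport (its Step~2) — but the ingredients and the structure of the argument are the same.
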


\begin{proof}
Assume that the assumptions \eqref{H-M}  and \eqref{eq.hip1}  are satisfied and  that 
$$\min_{\overline{ \ft C_{\ft{max}}} }f= \min_{\overline \Omega}f.$$
 \textbf{Step 1.}    For $\alpha>0$ small enough, let $\ft C_{\ft{max}}(\alpha)$ be  as introduced in~\eqref{eq.c1a}: 
$$
\ft C_{\ft{max}}(\alpha)= \ft C_{\ft{max}}\cap \big\{f<f_{\ft{max}}  -\alpha\big\}.
$$ 
Let $F\in C^{\infty}(\partial \Omega,\mathbb R)$. In this first step, we will prove that ~$\exists \alpha_0>0$,  $\forall \alpha\in (0,\alpha_0)$,~$\exists c>0$,~$\forall y\in \overline{\ft C_{\ft{max}}(\alpha)}$:
\begin{equation}\label{eq.step1-e}
\mathbb E_{\nu_h}  \left [ F\left (X_{\tau_{\Omega}}\right)\right ]=\mathbb E_{y}  \left [ F\left (X_{\tau_{\Omega}}\right)\right ]  +O\big (e^{-\frac{c}{h} }\big )
\end{equation}
in the limit $h \to 0$ and uniformly in~$y \in \overline{\ft C_{\ft{max}}(\alpha)}$. 
Let us recall that from the notation of Proposition~\ref{level} (see~\eqref{eq.vh-dynkin}), for all $x\in \overline \Omega$:
$$v_h(x)=\mathbb E_{x}  \left [ F\left (X_{\tau_{\Omega}}\right)\right ].$$
From~\eqref{eq.expQSD}, one has:
\begin{align}
\nonumber
\mathbb E_{\nu_h}  \left [ F\left (X_{\tau_{\Omega}}\right)\right ]&= \left (\int_{\Omega} u_h\, e^{-\frac{2}{h}f}\right )^{-1}\ \int_{\Omega}v_h\, u_h\, e^{-\frac{2}{h}f} \\
\label{eq.equation1} 
&= \frac{1}{Z_h(\Omega)}\, \int_{\overline{\ft C_{\ft{max}}(\alpha)}   } \ v_h\, u_h\, e^{-\frac{2}{h}f}
 + \frac{1}{Z_h(\Omega)}\int_{ \Omega \setminus \overline{\ft C_{\ft{max}}(\alpha)}  } \ v_h\, u_h\, e^{-\frac{2}{h}f} ,
\end{align}
where 
$$Z_h(\Omega):=\displaystyle \int  _{\Omega} u_h \ e^{- \frac{2}{h} f } $$ and $u_h$ is the principal eigenfunction   of $-L^{D}_{f,h}$ which satisfies~\eqref{eq.norma}. 
Let us first deal with the second term in~\eqref{eq.equation1}. Since~\eqref{H-M} and \eqref{eq.hip1} hold, and because it is assumed that $\min_{\overline{\ft C_{\ft{max}}}}f= \min_{\overline \Omega}f$, one obtains from Corollary~\ref{co.moyenne} that   there exists $C>0$ such that for   $h$ small enough:
$$\frac{1}{Z_h(\Omega) }  \le Ch^{-\frac d4} e^{\frac 1h \min  \limits_{\overline \Omega} f}. $$
Let us recall that for   $\alpha>0$ small enough, one has (see~\eqref{eq.cmax-arg}), 
$$\argmin_{\overline{ \ft C_{\ft{max}}}}f \subset \ft C_{\ft{max}}(\alpha).$$
Therefore, using~the second statement in  Corollary~\ref{co.moyenne} with $\ft O=  \Omega \setminus \overline{\ft C_{\ft{max}}(\alpha)}$,  for all $\alpha>0$ small enough,  there exists $c>0$ such that  when $h\to 0$:
$$ \int_{ \Omega \setminus \overline{\ft C_{\ft{max}}(\alpha)}  }  u_h\, e^{-\frac{2}{h}f}=O\Big(e^{-\frac 1h  (\min  \limits_{\overline \Omega} f+c )}\Big).$$
 Thus,  there exists $\alpha_0>0$ such that for all $\alpha\in (0,\alpha_0)$ there exists $c>0$ such that  when $h\to 0$:
\begin{equation}
\label{eq.pp2}
\frac{1}{Z_h(\Omega) } \int_{ \Omega \setminus \overline{\ft C_{\ft{max}}(\alpha)}  }  u_h\, e^{-\frac{2}{h}f} =O\big (e^{-\frac ch}\big).
\end{equation}
Then, since $\Vert v_h\Vert_{L^{\infty}(\overline \Omega)}\leq \Vert F\Vert_{L^{\infty}(\partial \Omega)}$,  one obtains that
 \begin{equation}
\label{eq.pp2-bis}
\frac{1}{Z_h(\Omega) }  \int_{ \Omega \setminus \overline{\ft C_{\ft{max}}(\alpha)}  }  \ v_h\, u_h\, e^{-\frac{2}{h}f}=O\big (e^{-\frac ch}\big).
\end{equation}
Let us now deal with the first term in~\eqref{eq.equation1}. Let us recall that 
 $\ft C_{\ft{max}}\subset \Omega$ is a connected component of $ \ft \{f<\max_{\overline{\ft C_{\ft{max}}}}f\}$.  
Moreover, for  $\alpha\in (0,\alpha_0)$ ($\alpha_0>0$ small enough),  
the compact set $\overline{\ft C_{\ft{max}}(\alpha)}$ is   connected and  $\overline{\ft C_{\ft{max}}(\alpha)}\subset \ft C_{\ft{max}}$. Therefore, from Proposition~\ref{level} applied to  $K=\overline{\ft C_{\ft{max}}(\alpha)}$ for $\alpha\in (0,\alpha_0)$, one obtains that there exists $\delta_{\alpha}>0$ such that for all $y\in \overline{\ft C_{\ft{max}}(\alpha)}$,
\begin{equation}\label{eq.equality}
\frac{1}{Z_h(\Omega) } \int_{\overline{\ft C_{\ft{max}}(\alpha)}} \  v_h\, u_h\, e^{-\frac{2}{h}f}=\, \frac{v_h(y)}{Z_h(\Omega) } \int_{\overline{\ft C_{\ft{max}}(\alpha)}}  \ u_h\, e^{-\frac{2}{h}f}+ \frac{O\big( e^{-\frac{\delta_{\alpha}}{h}} \big ) }{Z_h(\Omega) }  \displaystyle{\int_{\overline{\ft C_{\ft{max}}(\alpha)}}  \  u_h\, e^{-\frac{2}{h}f} } 
\end{equation}
 in the limit $h\to 0$ and uniformly with respect to $y \in \overline{\ft C_{\ft{max}}(\alpha)}$.
Moreover,  for all $\alpha\in (0,\alpha_0)$ there exists $c>0$ such that  in the limit $h\to 0$:
\begin{equation}\label{eq.1-ka}
 \frac{1}{Z_h(\Omega) }  \int_{\overline{\ft C_{\ft{max}}(\alpha)}}  u_h\, e^{-\frac{2}{h}f} =1+O\left (e^{-\frac{c}{h}}\right ).
\end{equation}
which follows from the fact that 
$$\frac{1}{Z_h(\Omega) }  \int_{\overline{\ft C_{\ft{max}}(\alpha)}}  u_h\, e^{-\frac{2}{h}f}    =1- \frac{1}{Z_h(\Omega) } \int_{\Omega\setminus \overline{\ft C_{\ft{max}}(\alpha)}}  u_h\, e^{-\frac{2}{h}f},$$
together with~\eqref{eq.pp2}. 
Let us now fix $\alpha\in (0,\alpha_0)$. Then, using~\eqref{eq.equality} and~\eqref{eq.1-ka},  $\exists c>0$,  $\exists \delta_\alpha>0$,  $\forall y\in  \overline{\ft C_{\ft{max}}(\alpha)}$:
\begin{equation}\label{eq.equ1}
\frac{1}{Z_h(\Omega) }  \, \int_{\overline{\ft C_{\ft{max}}(\alpha)}} \ v_h\, u_h\, e^{-\frac{2}{h}f}=v_h(y)\left (1+O\left (e^{-\frac{c}{h}}\right )\right ) +O\left ( e^{-\frac{\delta_{\alpha}}{h}}  \right)
\end{equation}
in the limit $h \to 0$ and uniformly with respect to~$y \in \overline{\ft C_{\ft{max}}(\alpha)}$. Therefore, using~\eqref{eq.equation1}, \eqref{eq.pp2-bis} and \eqref{eq.equ1},~$\exists \alpha_0>0$,  $\forall \alpha\in (0,\alpha_0)$,~$\exists c>0$,~$\forall y\in \overline{\ft C_{\ft{max}}(\alpha)}$:
$$
\mathbb E_{\nu_h}  \left [ F\left (X_{\tau_{\Omega}}\right)\right ]= \mathbb E_{y}  \left [ F\left (X_{\tau_{\Omega}}\right)\right ]+O\big (e^{-\frac{c}{h} }\big ),
$$
in the limit $h \to 0$ and uniformly with respect to  $y \in \overline{\ft C_{\ft{max}}(\alpha)}$. 
This concludes the proof of~\eqref{eq.step1-e}.
\medskip

\noindent
 \textbf{Step 2.}  Let us now conclude the  proof of Proposition~\ref{pr.exp-qsd-x} by considering a compact subset~$K$ of~$\Omega$ such that $ K\subset \mathcal A(\ft C_{\ft{max}})$. Let us recall that (see~\eqref{eq.ad}):
$$\mathcal A(\ft C_{\ft{max}})=\{ x\in \Omega, \,t_x=+\infty \text{ and } \omega(x)\subset \ft C_{\ft{max}}\}.$$ 
Since $\ft C_{\ft{max}}$ is open and stable by the flow
$\varphi_t(\cdot)$ (defined by~\eqref{hbb}), the continuity of  $\varphi_t(\cdot)$   implies that  there exists  $T_K\ge 0$ such that for all  $x\in K$, 
$$\varphi_{T_K}(x)\in \ft C_{\ft{max}}.$$ 
Moreover, since $K$ is a compact subset of $\Omega$ and for all $x\in K$, $t_x=+\infty$ (i.e. $\varphi_t(x)\in \Omega$ for all $t\ge 0$), there exists $\delta>0$ such that  all continuous curves $\gamma: [0,T_K]\to \overline \Omega$ such that  
$$\exists x\in K, \  \sup_{t\in [0,T_K]} \big \vert \gamma(t)-\varphi_t(x) \big \vert \le \delta,  $$
satisfy: 
\begin{equation}\label{eq.incluo}
\forall t\in [0,T_K], \ \gamma(t)\in  \Omega.
\end{equation}
Furthermore, up to choosing $\delta>0$ smaller, there exists $\alpha_K>0$ such that 
\begin{equation}\label{eq.incluo2}
\big \{\varphi_{T_K}(x)+z,  \ x\in K \text{ and } \vert z\vert \le \delta \big  \}\subset \ft C_{\ft{max}}( {\alpha_K})
\end{equation}
where, we recall, $\ft C_{\ft{max}}( {\alpha_K})$ is defined by~\eqref{eq.c1a}.
Let us now recall the following estimate of  Freidlin and Wentzell (see~\cite[Theorems 2.2 and 2.3 in Chapter 3, and Theorem 1.1 in Chapter 4]{FrWe},~\cite{Day2},~\cite[Theorem 3.5]{DZ} and~\cite[Theorem 5.6.3]{friedman2012stochastic}): for all $x\in K$, it holds:
\begin{equation}\label{eq.WFr-est}
\limsup_{h\to 0} h\, \ln \mathbb P_x\Big [ \sup_{t\in [0,T_K]} \big \vert X_t-\varphi_t(x) \big \vert \ge \delta   \Big ]\le - I_{x,T_K},
\end{equation}
where 
$$I_{x,T_K}=  \frac 12\,  \inf_{ \gamma \in H^1_{x,T_K} (\delta) } \,  \int_0^{T_K}\Big \vert \frac{d}{dt}  \gamma(t)+ \nabla f( \gamma(t))\Big \vert^2dt \ \ \in \mathbb R_+^*\cup \{+\infty\},$$
and $H^1_{x,T_K}(\delta)$ is the set of curves $ \gamma: [0,T_K]\to \Omega$ of regularity $H^1$ such that $ \gamma(0)=x$ and $\sup_{t\in [0,T_K]} \big \vert  \gamma(t)-\varphi_t(x) \big \vert \ge \delta$. 
Since  $K$ is compact, there exists $\eta_K>0$ such that for    $h$ small enough, it holds:
\begin{equation}\label{eq.WFr}
\sup_{x\in K}\mathbb P_x\Big [ \sup_{t\in [0,T_K]} \big \vert X_t-\varphi_t(x) \big \vert \ge \delta    \Big ]\le e^{-\frac{\eta_K}{h}}.
\end{equation}
Notice that  when $X_0=x\in K$ and  $\sup_{t\in [0,T_K]} \big \vert X_t-\varphi_t(x) \big \vert \le \delta$, it holds from~\eqref{eq.incluo} and~\eqref{eq.incluo2}:
\begin{equation}\label{eq.TtC}
\tau_\Omega>T_K \text{ and } X_{T_K}\in \ft C_{\ft{max}}( {\alpha_K}).
\end{equation}
Let us now consider $F\in C^\infty(\pa \Omega,\mathbb R)$. Let  $x\in K$. Then, 
$$ \mathbb E_{x}  \left [ F\left (X_{\tau_{\Omega}}\right)\right ]=  \mathbb E_{x}  \left [ F\left (X_{\tau_{\Omega}}\right) \mathbf{1}_{\sup_{t\in [0,T_K]} \big \vert X_t-\varphi_t(x) \big \vert \le \delta} \right ]+ \mathbb E_{x}  \left [ F\left (X_{\tau_{\Omega}}\right) \mathbf{1}_{\sup_{t\in [0,T_K]} \big \vert X_t-\varphi_t(x) \big \vert \ge \delta} \right ].$$
Using~\eqref{eq.WFr}, it holds for $h$ small enough:
$$\Big \vert \mathbb E_{x}  \left [ F\left (X_{\tau_{\Omega}}\right) \mathbf{1}_{\sup_{t\in [0,{T_K}]} \big \vert X_t-\varphi_t(x) \big \vert \ge \delta} \right ]\Big \vert  \le  \Vert F\Vert_{L^{\infty}}    \,  e^{-\frac{\eta_K}{h}}.$$
Using~\eqref{eq.TtC},~\eqref{eq.step1-e} (with $\alpha= \alpha_K$),~\eqref{eq.WFr}, and  the Markov property of the process~\eqref{eq.langevin},  there exists $c>0$ such that for all $x\in K$, one has when $h\to 0$:
\begin{align*}
  \mathbb E_{x}  &\left [ F\left (X_{\tau_{\Omega}}\right) \mathbf{1}_{\sup_{t\in [0,{T_K}]} \big \vert X_t-\varphi_t(x) \big \vert \le \delta} \right ]=  \mathbb E_{x}  \left [   \mathbb E_{X_{T_K}}\big[F\left (X_{\tau_{\Omega}}\right)\big] \mathbf{1}_{\sup_{t\in [0,{T_K}]} \big \vert X_t-\varphi_t(x) \big \vert \le \delta} \right ] 
  \\
  &=\Big(\mathbb E_{\nu_h}  \left [ F\left
    (X_{\tau_{\Omega}}\right)\right ]  +O\big (e^{-\frac{c}{h} }\big
    )\Big)\mathbb P_x\Big  [ \sup_{t\in [0,{T_K}]} \big \vert X_t-\varphi_t(x) \big \vert \le \delta    \Big  ] \\
  &=\mathbb E_{\nu_h}  \left [ F\left (X_{\tau_{\Omega}}\right)\right ]  +O\big (e^{-\frac{c}{h} }\big ),
  \end{align*}
  uniformly in $x\in K$. This concludes the proof of Proposition~\ref{pr.exp-qsd-x}.
\end{proof}

\subsection{From smooth functions $F$ to non-smooth functions $F$}\label{sec:end_proof}

 Let us now complete the proof of Theorem~\ref{thm.main}. 
 \begin{proof} In the following we assume that  \eqref{H-M},
   \eqref{eq.hip1}, \eqref{eq.hip2} and \eqref{eq.hip3} are
   satisfied. We recall that this implies that $\min_{\overline{\ft
       C_{\ft{max}}}}f= \min_{\overline \Omega}f$ and thus, the
   results of  Proposition~\ref{pr.exp-qsd-x} hold.
 Let $K$ be  a compact subset of $\Omega$ such that $$K\subset \mathcal A(\ft C_{\ft{max}})$$
 and let us assume that the process starts from $X_0=x\in K$.
Let $F\in L^{\infty}(\partial \Omega,\mathbb R)$.
 The proof of Theorem~\ref{thm.main} is  divided into three steps.

\medskip

\noindent
\textbf{Step 1.} Proof of~\eqref{eq.t1}  and \eqref{eq.t2}. 
\medskip

\begin{sloppypar}
\noindent
 Let    us first show that if $\Sigma\subset \partial \Omega$ is open and   there exists $\beta>0$ such that $\Sigma\cap \bigcup_{i=1}^{k_1^{\pa \Omega}}B_{\partial \Omega}(z_i,\beta)=\emptyset$ (where $B_{\partial \Omega}(z_i,\beta)$ is the open ball in~$\partial \Omega$ of radius $\beta$ centered at~$z_i$), then, for all $x\in K$, \end{sloppypar}

\begin{equation}\label{eq.sigma-x}
\mathbb P_{x}  \left [ X_{\tau_{\Omega}}\in \Sigma \right ]=O\big(e^{-\frac{c}{h}}\big)
\end{equation}
in the limit $h \to 0$ and uniformly in~$x \in K$. 
 To this end, let us 
 consider $\tilde F\in C^{\infty}(\partial \Omega,[0,1])$ be such that 
 $$\tilde F=1 \text{ on } \Sigma \ \text{ and } \ \tilde F=0 \text{ on }\, \bigcup_{i=1}^{\ft k_1^{\pa \Omega}}B_{\partial \Omega}(z_i,\frac{\beta}{2}).$$
  Using Proposition~\ref{pr.exp-qsd-x}, 
there exists $  c>0$ such that for all $x\in K$:
 $$\mathbb P_{x}  \left [ X_{\tau_{\Omega}}\in \Sigma \right ]\le \mathbb E_{x}  \left [ \tilde F(X_{\tau_{\Omega}})\right]= \mathbb E_{\nu_h}  \left [ \tilde F(X_{\tau_{\Omega}})\right]+O\left(e^{-\frac ch}\right )$$
 in the limit $h \to 0$ and uniformly in~$x \in K$. 
Then, Equation~\eqref{eq.sigma-x} follows from~\eqref{eq.t1-bis}
applied to  $\tilde F$ and the family of sets $\Sigma_i=B_{\partial
  \Omega}(z_i,\frac{\beta}{2})$ for $i\in\{1,\ldots,\ft k_1^{\pa
  \Omega}\}$. 

Let us now prove~\eqref{eq.t1} and~\eqref{eq.t2}. 
Let $F\in L^{\infty}(\partial \Omega,\mathbb R)$ and for all $i\in\{1,\dots,\ft k_{1}^{\pa \Omega}\}$, let $\Sigma_{i}\subset \pa \Omega$ be an open set which contains  $z_{i}$.  Let us assume in addition that $\Sigma_i\cap \Sigma_j=\emptyset$ if $i\neq j$. One has for any 
 $x\in K$
$$\mathbb E_{x}  \left [ F(X_{\tau_{\Omega}})\right]=\sum \limits_{i=1}^{\ft k_1^{\pa \Omega}}
\mathbb E_{x} \left [ (\mathbf{1}_{\Sigma_{i}}F)\left (X_{\tau_{\Omega}} \right )\right]  +  \mathbb E_{x} \Big  [ (\mathbf{1}_{\pa \Omega\setminus \bigcup_{i=1}^{\ft k_1^{\pa \Omega}}\Sigma_{i} }F)\left (X_{\tau_{\Omega}} \right )\Big ].$$
Moreover, one has:
$$\Big \vert  \mathbb E_{x} \Big  [ (\mathbf{1}_{\pa \Omega\setminus\bigcup_{i=1}^{\ft k_1^{\pa \Omega}}\Sigma_{i}}F)\left (X_{\tau_{\Omega}} \right )\Big  ]\Big \vert \le \Vert F\Vert_{L^{\infty}}\, \mathbb P_{x} \Big [ X_{\tau_{\Omega}} \in \pa \Omega\setminus \bigcup_{i=1}^{\ft k_1^{\pa \Omega}}\Sigma_{i}   \Big].$$
Using~\eqref{eq.sigma-x} with $\Sigma=\pa \Omega\setminus
\bigcup_{i=1}^{\ft k_1^{\pa \Omega}}\Sigma_{i} $, one
gets~\eqref{eq.t1}. 

Let us now prove~\eqref{eq.t2}. Let $j\in \{\ft k_1^{\pa \ft
  C_{\ft{max}}}+1,\ldots,\ft k_1^{\pa \Omega}\}$. Let $\delta>0$ be such that for any $k\in \{1,\ldots,\ft k_1^{\pa \Omega}\}$ with $k\neq j$, the sets $ B_{\partial \Omega}(z_k, \delta) $  and $\tilde \Sigma_j:=\cup_{z\in \Sigma_j} B_{\partial \Omega}(z, \delta)$ are disjoint. Let us consider 
$$G\in C^{\infty}_c(\tilde \Sigma_j,[0,1]) \text{ such that } G=1 \text{ on } \Sigma_j.$$
 Using Proposition~\ref{pr.exp-qsd-x}, there exists $  c>0$ such that for all $x\in K$,
\begin{align*}
\big\vert \mathbb E_{x}  \left [ (\mathbf{1}_{\Sigma_{j}}F)(X_{\tau_{\Omega}})\right] \big \vert \le \Vert F\Vert_{L^{\infty}}\, \mathbb P_{x} [ X_{\tau_{\Omega}} \in  \Sigma_j  ]
&\le \Vert F \Vert_{L^{\infty}}\,\mathbb E_{x}  \left [ G(X_{\tau_{\Omega}})\right]\\
&= O(\mathbb E_{\nu_h}  \left [ G(X_{\tau_{\Omega}})\right]  )+ O(e^{-\frac ch})
\end{align*}
 in the limit $h \to 0$ and uniformly in~$x \in K$.   
Then, using~\eqref{eq.t2-bis} and item 3 in Theorem~\ref{thm.main-bis}, it holds when $h\to 0$: 
$$  \mathbb E_{x}  \left [ (\mathbf{1}_{\Sigma_{j}}F)(X_{\tau_{\Omega}})\right]=O\big (h^{\frac 14}\big ),$$
 and when \eqref{eq.hip4} holds, one has  when $h\to 0$: $$ \mathbb E_{x}  \left [( \mathbf{1}_{\Sigma_{j}}F)(X_{\tau_{\Omega}})\right]=O(e^{-\frac ch}),$$
 for some $c>0$.  
 This concludes the proof of~\eqref{eq.t2}. 

\medskip
\noindent
\textbf{Step 2.} Proof of~\eqref{eq.t3}.    
\medskip

\noindent
For all  $j\in\{1,\dots,\ft k_{1}^{\pa \Omega}\}$, let $\Sigma_{j}$ be open subset of $\pa \Omega$ such that $z_{j}  \in \Sigma_{j}$.  Let us assume that $\Sigma_k\cap \Sigma_j=\emptyset$ if $k\neq j$. Let $F\in L^{\infty}(\partial \Omega,\mathbb R)$ be $C^{\infty}$ in a neighborhood  of $z_i$ for some $i\in\{1,\dots,\ft k_{1}^{\pa \ft C_{\ft{max}}}\}$.  Let $\beta>0$ be such that $F$ is $C^{\infty}$ on $B_{\partial \Omega}(z_i,2\beta )\subset \Sigma_i$ and let 
$\chi_i\in C^{\infty}(\pa \Omega,[0,1])$ be such that 

$${\rm supp}\, \chi_i\subset B_{\partial \Omega}(z_i,\beta )\, \text{ and } \, \chi_i=1 \text{ on } B_{\partial \Omega}(z_i,\beta/2).$$ 
One has:
$$\mathbb E_{x}  \left [ (\mathbf{1}_{\Sigma_i}F)(X_{\tau_{\Omega}})\right]=\mathbb E_{x}  \left [( \chi_iF)(X_{\tau_{\Omega}})\right]+\mathbb E_{x}  \left [ \big((\mathbf{1}_{\Sigma_i}-\chi_i) F\big)(X_{\tau_{\Omega}})\right].$$
Using Proposition~\ref{pr.exp-qsd-x} with $\chi_iF\in C^\infty$ and~\eqref{eq.t1-bis}-\eqref{eq.t3-bis} with $X_0\sim \nu_h$,~$F\chi_i$ and the family of pairwise disjoint open sets $\{ \Sigma_j, j=1,\ldots,\ft k_1^{\pa \Omega}, j\neq i \} \cup \{B_{\partial \Omega}(z_i,\frac{\beta}{2})\}$,  
there exists $  c>0$ such that for all $x\in K$:
\begin{align*}
\mathbb E_{x}  \left [ (\chi_iF)(X_{\tau_{\Omega}})\right] &=\mathbb E_{\nu_h}  \left [ (\chi_iF)(X_{\tau_{\Omega}})\right] +O\left (e^{-\frac{c}{h} }\right )\\
&=\mathbb E_{\nu_h}  \left [ (\mathbf{1}_{B_{\partial \Omega}(z_i,\frac{\beta}{2})}F)(X_{\tau_{\Omega}})\right] +O\left (e^{-\frac{c}{h} }\right )=F(z_i)\, a_i+O\big (h^{\frac 14}\big )
\end{align*}
 in the limit $h \to 0$ and uniformly in~$x \in K$,  
and where $a_i$ is defined in~\eqref{ai}. In addition, using item 3 in Theorem~\ref{thm.main-bis}, when~\eqref{eq.hip4} holds, one can replace $O\big (h^{\frac 14}\big )$ in the last computation by~$O(h)$.
Moreover, using~\eqref{eq.sigma-x} with $\Sigma=\Sigma_i\setminus  B_{\partial \Omega}(z_i,\frac{\beta}{2})$:  there exists $  c>0$ such that for all $x\in K$:
$$\big \vert  \mathbb E_{x}  \left [ ((\mathbf{1}_{\Sigma_i}-\chi_i)F)(X_{\tau_{\Omega}})\right] \big \vert \le \Vert F\Vert_{L^{\infty}}\, \mathbb P_{x} \left [ X_{\tau_{\Omega}} \in \Sigma_i\setminus  B_{\partial \Omega}\Big (z_i,\frac{\beta}{2}\Big ) \right]= O\left (e^{-\frac ch}\right)$$
in the limit $h \to 0$ and uniformly in~$x \in K$. 
Thus, one has when $h\to 0$ and uniformly with respect to $x\in K$:
$$\mathbb E_{x}  \left [ (\mathbf{1}_{\Sigma_i}F)(X_{\tau_{\Omega}})\right]=F(z_i)\, a_i+O\big (h^{\frac 14}\big ),$$ and when~\eqref{eq.hip4} holds, one has: 
$$\mathbb E_{x}  \left [( \mathbf{1}_{\Sigma_i}F)(X_{\tau_{\Omega}})\right]=F(z_i)\, a_i+O(h).$$  This concludes the proof of~\eqref{eq.t3}. The proof of   Theorem~\ref{thm.main} si complete.
\end{proof}


\section{On the exit point distribution when $X_0=x\in \mathcal A(\ft
  C)$, where
  $\ft C \in \mathcal C$}\label{sec:proof_th2}

In this section, one proves Theorem~\ref{thm.2} which aims at giving
the concentration of the law of $X_{\tau_\Omega}$ in the limit $h \to
0$, when  $X_0=x\in \mathcal A(\ft C)$, where
  $\ft C \in \mathcal C$ (we recall that $\mathcal C$ has been defined in~\eqref{mathcalC-def}).

 \subsection{Proof of Theorem~\ref{thm.2}}
 \label{sec.proffthm2}
 
\begin{proof}[Proof of Theorem~\ref{thm.2}.]
Let us assume that  \eqref{H-M} holds. Let $\ft C\in \mathcal C$. Assume that  (see~\eqref{eq.cc1})
$$
\pa \ft C\cap \pa \Omega\neq \emptyset \  \text{ and } \  \vert \nabla f\vert \neq 0 \text{ on } \pa \ft C.
$$
To prove Theorem~\ref{thm.2}, the strategy consists in using
Theorem~\ref{thm.main}  with a    subdomain   $\Omega_{\ft C}$ of
$\Omega$ containing $\ft C$ such that in the limit  $h\to 0$,  the
most probable places of exit of the process~\eqref{eq.langevin} from
$\Omega_{\ft C}$ when $X_0=x\in \ft C$  are the elements of $\pa \ft C
\cap \pa \Omega$. This will   imply (since the trajectories of the
process \eqref{eq.langevin} are continuous) that the most probable
places of exit of the process~\eqref{eq.langevin} from $\Omega$ when
$X_0=x\in \ft C$  are the elements of $\pa \ft C \cap \pa \Omega$,
which is the statement of Theorem~\ref{thm.2}. This result will be extend to initial conditions $X_0=x\in \mathcal A(\ft C)$ using a large deviations method.

The proof of Theorem~\ref{thm.2} is divided into two steps. 
\medskip

\noindent
\textbf{Step 1}: Construction of a domain $\Omega_{\ft C}$ containing $\ft C$.  
\medskip

\noindent
In this step, one  constructs a subset $\Omega_{\ft C}$ of $\Omega$ such that 
\begin{equation}\label{eq.OmegaC}
\left\{
\begin{aligned}
&\text{$\Omega_{\ft C}$ is a $C^\infty$ connected open subset of $\Omega$ containing $\ft C$},\\
 &\text{$\pa \Omega_{\ft C}\cap \pa \Omega $ is a neighborhood of  $\pa \ft  C \cap \pa \Omega$ in $\pa \Omega$},  \\ 
  &\text{argmin}_{\pa \Omega_{\ft C}}f=\pa \ft C\cap \pa \Omega, \\
  &\big \{ x\in  \overline{\Omega_{\ft C}},\,  f(x)<\min_{\pa \Omega_{\ft C}}f\big \}=\ft C,\\
  & \text{the critical points of $f$ in  } \overline{\Omega_{\ft C}} \text{ are included in } \ft C,
\end{aligned}
\right.
\end{equation}
and
\begin{equation}\label{eq.OmegaC2}
f: \pa \Omega_{\ft C} \to \mathbb R \text{ is a Morse function}.
\end{equation}

To construct a domain $\Omega_{\ft C}\subset \Omega$ which satisfies~\eqref{eq.OmegaC} and~\eqref{eq.OmegaC2}, we first introduce  a neighborhood $\ft V_{\ft C}$ of $ \overline{ \ft C}$ in $\overline \Omega$ as follows.   
Let $\lambda\in \mathbb R$ be such that $\ft C$ is a connected component of $\{f<\lambda\}$ (see~\eqref{eq.Cdef2}).   Then, for $z\in \pa \ft C$, we introduce a ball of radius $\ve_z>0$ centred at $z$ in $\overline \Omega$ as follows:  
\begin{enumerate}
\item If $z\in \pa \ft C\cap  \Omega$: Since $z\in \Omega$ and $\vert \nabla f(z)\vert \neq 0$, there exists  $\ve_z >0$ such that $\overline {B(z,\ve_z)}\subset \Omega$, 
 $\vert \nabla f(z)\vert \neq 0$ on $\overline {B(z,\ve_z)}$, and,  according to~\cite[Section 5.2]{HeNi1}, $ {B(z,\ve_z)}\cap \{f<\lambda \}$ is connected and   $ {B(z,\ve_z)}\cap \pa \{f<\lambda \}= {B(z,\ve_z)}\cap \{f=\lambda \}$ (where we recall that $B(z,\ve_z)=\{x\in \overline \Omega \ \text{s.t.} \ |x-z|<\ve_z\}$).

\item If $z\in \pa \ft C\cap \pa \Omega$: Recall  that $z\in 
\ft U_1^{\pa \Omega}$  (see~\eqref{eq.mathcalU1_bis})  and thus, $\pa_nf(z)>0$ and $z$ is a non degenerate local minimum of $f|_{\pa \Omega}$. Thus, there exists $\ve_z >0$,  such that $\vert \nabla f(z)\vert \neq 0$ on $\overline {B(z,\ve_z)}$  and such that, according to~\cite[Section 5.2]{HeNi1},   
 $ {B(z,\ve_z)}\cap \{f<\lambda \}$ is connected and included in $\Omega$. In addition,   $ {B(z,\ve_z)}\cap \pa \{f<\lambda \}= {B(z,\ve_z)}\cap \{f=\lambda \}$.
Finally,   up to choosing   $\ve_z>0$ smaller,  one has:
\begin{equation}\label{eq.GAMMAC0}
\argmin_{  \overline{B_{\pa \Omega}(z,\ve_z )} }   f=\{z\},
\end{equation}
where we recall that $B_{\partial \Omega}(z, \ve_z )$ is the open ball of radius $\ve_z$ centred in~$z$ in~$\pa \Omega$, and, 
\begin{equation}\label{eq.panf-b}
\vert \nabla_Tf \vert \neq 0 \text{ on } \overline {B_{\pa \Omega}(z,\ve_z)}\setminus\{z\}   \ \text{ and } \ \pa_nf>0 \text{ on }   \overline{B(z,\ve_z)}\cap \pa \Omega.
\end{equation}
\end{enumerate}
Items 1 an 2 above imply that for all $z\in \pa \ft C$, by definition of $\ft C$ (see Theorem~\ref{thm.2}), 
\begin{equation}\label{eq.GAMMAC01}
{B(z,\ve_z)}\cap \ft C={B(z,\ve_z)}\cap \{f<\lambda \}  \text{ and thus, }  {B(z,\ve_z)}\cap \pa \ft C={B(z,\ve_z)}\cap \{f=\lambda \}.
\end{equation}  
One then defines:
$$\ft V_{\ft C}:=  \left (\,  \bigcup_{z\in \pa \ft C}  B(z,\ve_z)\,  \right)\bigcup \ft C  .$$
The set $\ft V_{\ft C}$ is an   open neighborhood of   $\overline{\ft C}$ in $\overline \Omega$. Moreover, according to items~1 and~2 above,
\begin{equation}\label{eq.vC-s}
\vert\nabla f\vert \neq 0 \text{ on } \overline{\ft V_{\ft C}}\setminus \ft C,
\end{equation}
and  using in addition~\eqref{eq.GAMMAC01}, 
\begin{equation}\label{eq.vC-s2}
\{f< \lambda\} \cap \ft V_{\ft C}=\ft C\,  \text{ and } \{f\le \lambda\} \cap \ft V_{\ft C}= \overline{\ft C}.
\end{equation} 
The second statement in~\eqref{eq.vC-s2} implies that  $\overline{\ft C}$ is a connected component of $\{f\le \lambda\}$. Thus, for $r>0$ small enough $\overline{ {\ft C}(\lambda+r) }\subset \ft V_{\ft C}$, where ${\ft C}(\lambda+r)$ is the connected component of  $\{f< \lambda+r\}$ which contains $\ft C$. 
 This suggests that a natural candidate to satisfy~\eqref{eq.OmegaC} and~\eqref{eq.OmegaC2}  is  the domain ${\ft C}(\lambda+r)$. However, for $r>0$ small enough, the boundary of  ${\ft C}(\lambda+r)$ is not~$C^\infty$: it is composed  of two smooth pieces     $\overline{\pa\ft C(\lambda+r)\cap \Omega}=\{x\in \pa \ft C(\lambda+r), \, f(x) = \lambda + r\, \} $   and  $  \pa\ft C(\lambda+r) \cap \partial \Omega$. The union of this two sets  gives  rise  to "corners". 
  Moreover, the function $f|_{\pa\ft C(\lambda+r)\cap \Omega}$ is not a Morse function since $f\equiv \lambda+r$ on $\overline{\pa\ft C(\lambda+r)\cap \Omega}$.

To justify the existence of  a domain~$\Omega_{\ft C}$ which satisfies \eqref{eq.OmegaC} and~\eqref{eq.OmegaC2}, we now proceed in two steps, as follows.
\begin{itemize}
\item \begin{sloppypar}\underline{Domain  $D_{\ft C}$   containing $\ft C$ which satisfies \eqref{eq.OmegaC} and $\pa_nf>0$ on $\pa D_{\ft C}$}. 
The subdomain $D_{\ft C}$ of $\Omega$ is constructed as a smooth regularization of the set $\ft C(\lambda+r)$ with $r>0$ such that $\overline{ {\ft C}(\lambda+r) }\subset \ft V_{\ft C}$ by modifying ${\ft C}(\lambda+r)$ in a neighborhood of ${ \{x\in \pa \ft C(\lambda+r), \, f(x) = \lambda + r\, \} } \cap \partial \Omega$ (where the two smooth pieces of $\pa  \ft C(\lambda+r)$ intersect each other).      Moreover, $\pa_n f>0$ on   $\overline{\pa\ft C(\lambda+r)\cap \Omega}$  (since there is no critical point of $f$ on $\overline{\pa\ft C(\lambda+r)\cap \Omega}= \{x\in \pa \ft C(\lambda+r), \, f(x) = \lambda + r\, \} $)   and on  $  \pa\ft C(\lambda+r) \cap \partial \Omega$ (since $\overline{ {\ft C}(\lambda+r) }\subset \ft V_{\ft C}$ and $\pa_nf>0$ on $\ft V_{\ft C}\cap \pa \Omega$,  see the second inequality in~\eqref{eq.panf-b}).\end{sloppypar}
 Thus, using in addition~\eqref{eq.vC-s2} together with the fact that $\ft V_{\ft C}$ is an   open neighborhood of   $\overline{\ft C}$ in $\overline \Omega$,  there exists    a $C^\infty$ connected   open   subset  $D_{\ft C}$ of $\Omega$ such that 
\begin{equation}\label{G-DC0}
\ft C\subset D_{\ft C}, \ \overline{D_{\ft C}}\subset \ft V_{\ft C},
\end{equation}
and  
\begin{equation}\label{eq.pan-dc}
\pa_n f>0\text{ on } \pa D_{\ft C},
\end{equation}
which satisfies,  for some $\beta>0$ and $\Sigma_{\ft C}\subset \Omega$, 
\begin{equation}\label{G-DC}
\pa D_{\ft C}=  \left (\,  \bigcup_{z\in \pa \ft C\cap \pa \Omega}  B_{\pa \Omega}(z,\ve_z/2)    \right)\bigcup \,\overline{ \Sigma_{\ft C}}, \,  \text{ where, }  \, f\ge  \lambda+\beta \, \text{ on } \, \overline{\Sigma_{\ft C}}.  
\end{equation}
Finally, according to the first statement in~\eqref{eq.panf-b}, there exists $\delta_0>0$ such that for any  open  $\delta$-neighborhood $U_{\pa \Omega}^\delta$ of $\pa \Omega$ in $\overline \Omega$, with $\delta\in (0,\delta_0)$, one has 
\begin{equation}\label{eq.PC-dc}
\vert  \nabla_T f\vert \neq 0 \text{ on } \overline{ \pa D_{\ft C} \cap  {U_{\pa \Omega}^\delta}} \, \setminus (\pa \ft C\cap \pa \Omega),
\end{equation}
where $\nabla_Tf$ is the tangential gradient of $f$ on $\pa D_{\ft C}$. 

\item \underline{Domain  $\Omega_{\ft C}$   containing $\ft C$ which satisfies \eqref{eq.OmegaC} and~\eqref{eq.OmegaC2}}.  The domain $\Omega_{\ft C}$ will be constructed as a perturbation of $D_{\ft C}$,  using an argument related to the
genericity of Morse functions, and more precisely, a method due to
Ren\'e Thom based on
Sard's theorem. All the details will be given in the next section. More precisely,
we apply Proposition~\ref{Lau1}, which is stated in the
next section, with    $D=D_{\ft C}$, $\mathcal V_-=\ft C$,  $\mathcal
V_+=\ft V_{\ft C}$, and, for a $\delta \in (0,\delta_0)$ (see~\eqref{eq.PC-dc}):
\begin{itemize}
\item[(i)] $S_1=\pa D_{\ft C} \cap  U^{\delta/2}_{\pa \Omega}$, which
  is such that  $f : \overline{S_1} \to \mathbb R$
  is a Morse function with no critical point on $\pa S_1$
  (see~\eqref{eq.PC-dc} together with the fact that $\pa \ft C \cap
  \pa \Omega$ is the union  of non degenerate critical points of $f|_{\pa \Omega}$),
\item[(ii)]  $S_1'=\pa D_{\ft C} \cap  U^{\delta/4}_{\pa \Omega}$ which satisfies, according to~\eqref{eq.PC-dc}, $\vert \nabla_T f\vert \neq 0$ on $\overline{S_1\setminus S_1'}$. 
\end{itemize}
Therefore,   using in addition the fact that $D_{\ft C}$ satisfies~\eqref{G-DC0}--\eqref{G-DC},    there exists  a $C^\infty$ connected   open   subset  $\Omega_{\ft C}$ of $\Omega$ such that $\ft C\subset \Omega_{\ft C}$, $\overline{\Omega_{\ft C}}\subset \ft V_{\ft C}$, 
  $$f: \pa \Omega_{\ft C} \to \mathbb R \text{ is a Morse function},$$ 
  and for some $r>0$ and $\Gamma_{\ft C}\subset \Omega$, 
 \begin{equation}\label{eq.GAMMAC}
\pa \Omega_{\ft C}=  \left (\,  \bigcup_{z\in \pa \ft C\cap \pa \Omega}  B_{\pa \Omega}(z,\ve_z/2)    \right)\bigcup \,\overline{ \Gamma_{\ft C}}, \,  \text{ where, }  \, f\ge  \lambda+r \, \text{ on } \, \overline{\Gamma_{\ft C}}.  
 \end{equation}
\end{itemize}
It then remains to check that  $\Omega_{\ft C}$ satisfies~\eqref{eq.OmegaC}. 
From~\eqref{eq.GAMMAC} and~\eqref{eq.GAMMAC0}, $\Omega_{\ft C}$ satisfies the two first statements in~\eqref{eq.OmegaC} and $\min_{\pa \Omega_{\ft C}}f=\lambda$. 
Since  $\ft C\subset \Omega_{\ft C}$ and $\overline{\Omega_{\ft C}}\subset \ft V_{\ft C}$, one deduces from the first statement in~\eqref{eq.vC-s2}, that 
$$\big \{ x\in  \overline{\Omega_{\ft C}},\,  f(x)<\lambda \big \}=\ft C,$$
and from~\eqref{eq.vC-s},   
$$\vert\nabla f\vert \neq 0 \,  \text{ on } \,  \overline{\Omega_{\ft C}}\setminus \ft C.$$
This proves that   $\Omega_{\ft C}$ satisfies the two last statements in~\eqref{eq.OmegaC}. 
This concludes the construction of a domain $\Omega_{\ft C}$ which satisfies~\eqref{eq.OmegaC} and~\eqref{eq.OmegaC2}. 
 A schematic representation of such a domain $ {\Omega_{\ft C}}$ is given on Figure~\ref{fig:oo}.  
\medskip

\noindent
\textbf{Step 2}: End of the proof of Theorem~\ref{thm.2}.
\medskip

\noindent
 For all $z \in \pa \ft C\cap \pa \Omega$, let   $\Sigma_{z}$  be an open subset   of~$\pa \Omega$ such that~$ z\in \Sigma_{z}$. Let $K$ be a compact subset of~$\Omega$ such that $K\subset \mathcal A(\ft C)$. 
Let us first consider the case when~$K\subset \ft C$. 
\medskip

Let $\Omega_{\ft C}$ be the $C^\infty$ subdomain of $\Omega$
constructed in the previous step and which, we recall, contains $\ft
C$ and satisfies~\eqref{eq.OmegaC} and~\eqref{eq.OmegaC2}. Then, one
easily deduces  that when~$\Omega$ is replaced by~$\Omega_{\ft C}$,
the function $f: \overline{\Omega_{\ft C}}\to \mathbb R$
satisfies~\eqref{H-M} and $\mathcal C=\{\ft C\}$
(see~\eqref{mathcalC-def} for the definition of $\mathcal C$). Thus,
in this case   $\ft C_{\ft{max}}=\ft C$. Moreover, using in addition
the second and third statements in~\eqref{eq.OmegaC},   one obtains
that the assumptions~\eqref{eq.hip1},~\eqref{eq.hip2},~\eqref{eq.hip3}
and~\eqref{eq.hip4}  are satisfied for the function $f:
\overline{\Omega_{\ft C}}\to \mathbb R$. Thus, according to
Theorem~\ref{thm.main} applied to the function $f:
\overline{\Omega_{\ft C}}\to \mathbb R$,  the most probable places of
exit of the process~\eqref{eq.langevin} from $\Omega_{\ft C}$ when
$X_0=x\in \ft C$,  are $\pa \ft C \cap \pa \Omega$, and the relative
asymptotic probabilities to exit through each point in $\pa \ft C \cap \pa \Omega$ are given by  item 2 in Theorem~\ref{thm.main}. In particular, from items~1 and~3 in Theorem~\ref{thm.main}, for any open subset $\Sigma$ of $\pa \Omega_{\ft C}$ such that 
$$\min_{\overline \Sigma} f>\min_{\pa \Omega_{\ft C}} f \ \, \text{(where we recall $\argmin_{\pa \Omega_{\ft C}} f =\pa \ft C\cap \pa \Omega$, see~\eqref{eq.OmegaC})},$$
there exists $c>0$ such that for $h$ small enough:
\begin{equation}\label{eq.item13}
 \sup_{x\in K} \mathbb P_x \big[  X_{\tau_{\Omega_{\ft C}}}  \in  \Sigma \big]\le e^{-\frac ch},
\end{equation}
where $\tau_{\Omega_{\ft C}}$ is  the first exit time from $\Omega_{\ft C}$ of the process~\eqref{eq.langevin}. 
\medskip

\noindent
\textbf{Step 2a}: Proof of the   first asymptotic estimate in Theorem~\ref{thm.2} when $K\subset \ft C$. 
\medskip

\noindent
Writing $ \pa \Omega =( \pa \Omega \cap \pa \Omega_{\ft C}  )\cup  ( \pa \Omega \setminus \pa \Omega_{\ft C} )$, it holds:
\begin{equation}\label{eq.utdec}
  \Big (\pa \Omega \setminus \bigcup_{z\in \pa\ft C\cap \pa \Omega } \Sigma_z\Big)  \   \subset \  \left (\pa \Omega_{\ft C} \cap \pa \Omega \setminus  \bigcup_{z\in \pa\ft C\cap \pa \Omega } \Sigma_z  \right) \   \bigcup   \   \big (  \pa \Omega \setminus  \pa \Omega_{\ft C}   \big ) . 
  \end{equation}
To prove the first asymptotic estimate in Theorem~\ref{thm.2}, let us prove that when $X_0=x\in K$, the probabilities that   $X_{\tau_\Omega}$ belongs to each of the two sets in the right-hand side of~\eqref{eq.utdec} are exponentially small when $h\to 0$. 
Let us recall that~$\tau_{\Omega_{\ft C}}$ is  the first exit time from $\Omega_{\ft C}$ of the process~\eqref{eq.langevin} and thus,  when  $X_0=x\in \Omega_{\ft C}$, $\tau_{\Omega_{\ft C}}\le \tau_{\Omega}$, and  
\begin{equation}\label{eq.tauC}
\text{$\tau_{\Omega_{\ft C}}=\tau_{\Omega}$ if and only if  $X_{\tau_{\Omega_{\ft C}}}\in \pa \Omega_{\ft C}\cap  \pa \Omega$.}
\end{equation} 
Thus, from~\eqref{eq.tauC}, when $X_0=x\in  \Omega_{\ft C}$,  it holds:
\begin{align*}
\Big\{ X_{\tau_\Omega}\in \pa \Omega_{\ft C} \cap \pa \Omega \setminus  \cup_{z\in \pa\ft C\cap \pa \Omega } \Sigma_z   \Big\} \subset  \big \{X_{\tau_{\Omega_{\ft C}}}  \in    \pa \Omega_{\ft C}\setminus \pa \Omega  \} \cup \Big \{ X_{\tau_{\Omega_{\ft C} }}  \in  \pa \Omega_{\ft C} \cap \pa \Omega \setminus  \cup_{z\in \pa\ft C\cap \pa \Omega } \Sigma_z   \Big\}.
\end{align*}
Using~\eqref{eq.item13}, there exists $c>0$ such that for $h$ small enough: 
$$\sup_{x\in K} \mathbb P_x\Big [  X_{\tau_{\Omega_{\ft C} }}  \in  \pa \Omega_{\ft C} \cap \pa \Omega \setminus  \bigcup_{z\in \pa\ft C\cap \pa \Omega } \Sigma_z    \Big]\le e^{-\frac ch}$$
and
  \begin{equation}\label{eq.oubli-11}
\sup_{x\in K} \mathbb P_x\Big [  X_{\tau_{\Omega_{\ft C}}}  \in    \pa \Omega_{\ft C}\setminus \pa \Omega   \Big]\le e^{-\frac ch}.
\end{equation}
Thus, there exists $c>0$ such that for $h$ small enough: 
\begin{equation}\label{eq.ut}
\sup_{x\in K} \mathbb P_x\Big [  X_{\tau_{\Omega }}  \in  \pa \Omega_{\ft C} \cap \pa \Omega \setminus  \bigcup_{z\in \pa\ft C\cap \pa \Omega } \Sigma_z    \Big]\le e^{-\frac ch}.
\end{equation}
Let us now consider the case when $X_{\tau_\Omega}\in   \pa \Omega \setminus  \pa \Omega_{\ft C} $. When $X_0=x\in K$, it holds from~\eqref{eq.tauC}:
$$\big\{X_{\tau_\Omega}\in   \pa \Omega \setminus  \pa \Omega_{\ft C} \big\}\subset \big\{ X_{\tau_{\Omega_{\ft C}}}  \in   \pa \Omega_{\ft C}\setminus \pa \Omega   \big\}.$$
Therefore, from~\eqref{eq.oubli-11}, there exists $c>0$ such that for $h$ small enough: 
  \begin{equation}\label{eq.ut2}
\sup_{x\in K} \mathbb P_x\Big [  X_{\tau_{\Omega }}  \in    \pa \Omega \setminus  \pa \Omega_{\ft C}  \Big]\le e^{-\frac ch}.
\end{equation}
In conclusion, from~\eqref{eq.utdec},~\eqref{eq.ut} and~\eqref{eq.ut2}, one obtains that there exists $c>0$ such that for $h$ small enough: 
\begin{equation}\label{utili1}
\sup_{x\in K} \mathbb P_x\Big [X_{\tau_{\Omega}}\in  \pa \Omega
\setminus \bigcup_{z\in \pa\ft C\cap \pa \Omega } \Sigma_z  \Big]\le e^{-\frac ch}.
\end{equation}
This proves the first asymptotic estimate in Theorem~\ref{thm.2} when $K\subset\ft  C$. 
\medskip

\noindent
\textbf{Step 2b}: Proof of the second  asymptotic estimate in Theorem~\ref{thm.2} when $K\subset\ft  C$.
\medskip

\noindent 
Let us assume   that   the open sets $(\Sigma_{z})_{z\in \pa \ft C\cap \pa \Omega}$  are pairwise disjoint. Let us consider  $z\in \pa \ft C\cap \pa \Omega$ and $\beta>0$ such that (see indeed the second statement in~\eqref{eq.OmegaC}), 
\begin{equation}\label{eq.bomegac}
 B_{\partial \Omega}(z, \beta)  \subset   \Sigma_z  \cap \pa \Omega_{\ft C}.
 \end{equation}
Then, one writes:
\begin{equation}\label{eq.dec-o}
\mathbb P_x  [X_{\tau_{\Omega}}\in \Sigma_z   ]=\mathbb P_x\big [X_{\tau_{\Omega}}\in       B_{\partial \Omega}(z, \beta)  \big]+ \mathbb P_x\big [X_{\tau_{\Omega}}\in    \Sigma_z \setminus    B_{\partial \Omega}(z, \beta)  \big].
 \end{equation}
Let us first deal with the second term in the right-hand side of~\eqref{eq.dec-o}. It holds (since the sets $(\Sigma_{y})_{y\in \pa \ft C\cap \pa \Omega}$  are pairwise disjoint and $B_{\partial \Omega}(z, \beta)  \subset   \Sigma_z  $, see~\eqref{eq.bomegac}), when $X_0=x\in \Omega$:
$$
\mathbb P_x\big [X_{\tau_{\Omega}}\in    \Sigma_z \setminus    B_{\partial \Omega}(z, \beta)  \big]\le \mathbb P_x\left [X_{\tau_{\Omega}}\in    \pa \Omega  \setminus  \Big(    B_{\partial \Omega}(z, \beta) \cup \bigcup_{y\in \pa\ft C\cap \pa \Omega,y\neq z } \Sigma_y\Big)   \right].
$$
Thus, from~\eqref{utili1} (applied with    $B_{\partial \Omega}(z, \beta) $ instead  of   $\Sigma_z$), one obtains   that   there exists $c>0$ such that for $h$ small enough: 
\begin{equation}\label{eq.step2-ch}
\sup_{x\in K} \mathbb P_x\Big [X_{\tau_{\Omega}}\in  \Sigma_z \setminus    B_{\partial \Omega}(z, \beta)     \Big]\le e^{-\frac ch}.
 \end{equation}
Let us now deal with the first term in the right-hand side~\eqref{eq.dec-o}. It holds from~\eqref{eq.bomegac}, when $X_0=x\in K$:
\begin{align} 
\nonumber
\mathbb P_x\big [X_{\tau_{\Omega}}\in       B_{\partial \Omega}(z, \beta)  \big]&=\mathbb P_x\big [X_{\tau_{\Omega}}\in       B_{\partial \Omega}(z, \beta),\tau_{\Omega_{\ft C}  }<\tau_{\Omega}  \big]+\mathbb P_x\big [X_{\tau_{\Omega_{\ft C}} }\in       B_{\partial \Omega}(z, \beta),
 \tau_{\Omega_{\ft C}} =\tau_{\Omega} \big]\\
 \label{eq.step2-ch2}
 &=O(e^{-\frac ch})+ \mathbb P_x\big [X_{\tau_{\Omega_{\ft C}} }\in       B_{\partial \Omega}(z, \beta)],
 \end{align}
 where we used the fact that $\{\tau_{\Omega_{\ft C}  }<\tau_{\Omega}\}\subset \{X_{\tau_{\Omega_{\ft C}}}  \in   \pa \Omega_{\ft C}\setminus \pa \Omega \}$ (see~\eqref{eq.tauC}) and~\eqref{eq.oubli-11}. 
\begin{figure}[h!]
\begin{center}
\begin{tikzpicture}[scale=0.87]
\tikzstyle{vertex}=[draw,circle,fill=black,minimum size=5pt,inner sep=0pt]
\tikzstyle{ball}=[circle, dashed, minimum size=1cm, draw]
\tikzstyle{point}=[circle, fill, minimum size=.01cm, draw]
\draw [rounded corners=10pt] (0.1,0.5) -- (-1,2.5) -- (1,5) -- (5,6.5) -- (7.6,3.75) -- (6,1) -- (4,0) -- (0.2,-0.3) --cycle;
\draw [thick, densely dashed, rounded corners=10pt] (1.9,-0.24) -- (.5,1.5) -- (0,2.5) -- (-0.05,3.9) -- (2.2,3.5) -- (2.7,2.7) -- (2,2) -- (3,1.5) --cycle;
\draw [thick, densely dashed,rounded corners=10pt]    (2.75,4.2)  -- (3.4,4.4) -- (4,3) --(5.5,2.5)--(6.5,4) --(6.5,5)  -- (5,6) -- (3,5)  --cycle;
 \draw (1.6,1.5) node[]{$\ft C_2$};
  \draw (5,4) node[]{$\ft C_{\ft{max}}$};
     \draw  (5,1.3) node[]{$\Omega$};
    \draw  (6.7,1.5) node[]{$\pa \Omega$};

\draw [very thick,black] (0.44,4.3)--(-0.21,3.5);
\draw [very thick,black] (1.2,-0.2)--(1.9,-0.14) ; 
\draw [very thick,black] (2.6,-0.11)--(1.9,-0.14) ;

\draw [very thick,black]  (-0.21,3.5) ..controls (-0.4,3)  and (0.28,0.8)  .. (0.45,0.6) ;
\draw [very thick,black]  (2.6,-0.11) ..controls (2.9,0.05)  ..  (3.5,1);
\draw [very thick,black]  (0.44,4.3) ..controls (1.2,4.8)  .. (2.9,3.3);
\draw [very thick,black] (3.5,1)  ..controls (4,2)  .. (2.9,3.3);
\draw [very thick,black] (0.45,0.6)  ..controls (0.9,-0.1)  .. (1.2,-0.2);

     \draw  (-0.1,4.2) node[]{$z_3$};
\draw (0.04,3.8) node[vertex,label=north west: {}](v){};
\draw (1.4,2.5) node[vertex,label=north west: {$x_2$}](v){};
\draw (4.3 ,4.4) node[vertex,label=north west: {$x_1$}](v){};
\draw (1.9,-0.14) node[vertex,label=south west: {$z_2$}](v){};
\draw (6.2,5.2) node[vertex,label=north east: {$z_1$}](v){};
     \draw  (3.4,2) node[]{$ \Omega_{\ft C_2}$};

       \draw [<->,very thick, densely dotted] (-0.2,4.9)--(-1,4) ;
  \draw  (-1,4.6) node[]{$\ve_{z_3}$};

\draw [<->,very thick, densely dotted] (2.6,-0.8)--(1.2,-0.9) ;
  \draw  (2,-1.2) node[]{$\ve_{z_2}$};
     
     \draw[ultra thick] (9.8,2)--(10.8,2);
   \draw (11.4,2)node[]{$\pa \Omega_{\ft C_2}$};

\end{tikzpicture}

\caption{Schematic representation  of $\Omega_{\ft C_2}$ satisfying~\eqref{eq.OmegaC} when $\ft C=\ft C_2$. On the figure, $\pa \ft C_2 \cap \pa \Omega=\{z_2,z_3\}$, $x_2$ is the global minimum of $f$ in $\ft C_2$ and $\ft C_{\ft{max}}$ is another element of~$\mathcal C$. }
 \label{fig:oo}
 \end{center}
\end{figure}
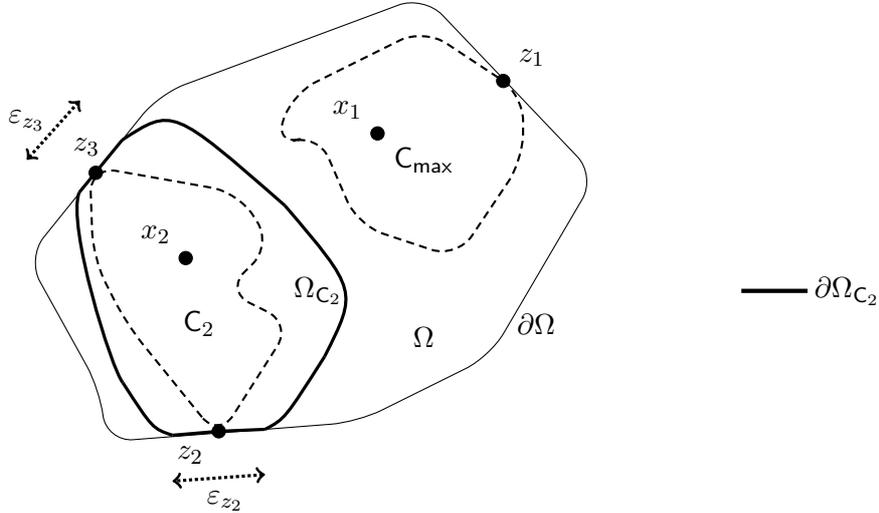
\noindent
Applying item 2 in Theorem~\ref{thm.main} with the function $f: \overline{\Omega_{\ft C}}\to \mathbb R$ and $F=\mbf{1}_{B_{\partial \Omega}(z, \beta)}$, one has:
$$\mathbb P_{x}[X_{\tau_{\Omega_{\ft C}}} \in  B_{\partial \Omega}(z, \beta) ]= \frac{  \partial_nf(z)      }{  \sqrt{ {\rm det \ Hess } f|_{\partial \Omega}   (z) }  } \left (\sum \limits_{y\in\pa \ft C \cap \pa \Omega  } \frac{  \partial_nf(y)      }{  \sqrt{ {\rm det \ Hess } f|_{\partial \Omega}   (y) }  }\right)^{-1}(1+O(h)),$$
in the limit $h \to 0$ and uniformly in~$x \in K$. 
 Together with~\eqref{eq.dec-o},~\eqref{eq.step2-ch}, and~\eqref{eq.step2-ch2}, this concludes the proof of the second asymptotic estimate in Theorem~\ref{thm.2} for initial conditions  $X_0=x\in   K\subset \ft C$ and when $F=1$ on $\pa \Omega$. To extend the second asymptotic estimate in Theorem~\ref{thm.2} to functions $F\in L^\infty(\pa \Omega,\mathbb R)$ which are smooth in a neighborhood of $z$ in $\pa \Omega$, one uses the same procedure as the one made in the second step in Section~\ref{sec:end_proof}. 
 \medskip
 
Finally, the case when $X_0=x\in K\subset \mathcal A( \ft C)$ is
proved using the estimate of  Freidlin and
Wentzell~\eqref{eq.WFr-est}, as in the second step of the proof of Proposition~\ref{pr.exp-qsd-x}. This concludes the proof of Theorem~\ref{thm.2}.
\end{proof}

\subsection{Proof of the existence of a domain $\Omega_{\ft C}$ satisfying (\ref{eq.OmegaC2})}
\label{sec.OMEGAC-morse}

In this section, we prove the existence of a domain $\Omega_{\ft C}$  which satisfies~\eqref{eq.OmegaC2} in addition to~\eqref{eq.OmegaC}.  To this end, we first  give in Proposition~\ref{pr.Morsesigma} a simple perturbation result to present the main idea of the proof. Then, we extend this result to the setting we are interested in to prove   the existence of such a domain $\Omega_{\ft C}$ in  Proposition~\ref{Lau1}.

\begin{proposition}\label{pr.Morsesigma}
Let $f:\mathbb R^{d}\to \mathbb R$ be a $C^\infty$ function and 
$D$ be a $C^\infty$ open  bounded and connected  subset of $\mathbb R^d$. Let us assume that 
$$\forall x\in \pa D, \ \nabla f(x)\oplus T_x\pa D=\mathbb R^d.$$
Then, for any open sets $ \mathcal V_-$ and $ \mathcal V_+$ such that $\overline{ \mathcal V_-} \subset D \text{ and } \overline{ D} \subset \mathcal V_+$,    
there exists a $C^\infty$ open bounded  and connected  subset $D'$ of~$\mathbb R^d$ such that 
$$\overline{ \mathcal V_-} \subset D', \ \overline{ D'} \subset \mathcal V_+, \ \text{ and } \ f|_{\pa D'} \text{  is a Morse function}.$$
 
\end{proposition}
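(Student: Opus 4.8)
The plan is to obtain $D'$ by perturbing $\partial D$ inside a thin tubular neighbourhood of it, and to use the parametric transversality theorem (Sard's theorem, in the spirit of Thom's method) to arrange that $f$ restricted to the perturbed boundary is a Morse function. The crucial observation is that the hypothesis $\nabla f(x)\oplus T_x\partial D=\mathbb R^d$ is equivalent to $\partial_n f\neq 0$ on the compact hypersurface $\partial D$, and that this non-vanishing is precisely what guarantees that a normal deformation of $\partial D$ perturbs $f|_{\partial D}$ in \emph{every} tangential direction, so that the genericity of Morse functions can be brought to bear.

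\textbf{Reduction to a fixed manifold.} Let $\nu$ denote the outward unit normal of $D$. By the normal tubular neighbourhood theorem there are $\varepsilon_0>0$ and a diffeomorphism $\Phi:\partial D\times(-\varepsilon_0,\varepsilon_0)\to N$ onto an open neighbourhood $N$ of $\partial D$ in $\mathbb R^d$ with $\Phi(y,0)=y$ and $\partial_t\Phi(y,0)=\nu(y)$; shrinking $\varepsilon_0$ we may assume $\overline N\subset\mathcal V_+$ and $\overline N\cap\overline{\mathcal V_-}=\emptyset$. For $\sigma\in C^\infty(\partial D,\mathbb R)$ with $\|\sigma\|_{C^0}<\varepsilon_0/2$, the embeddings $y\mapsto\Phi(y,s\sigma(y))$, $s\in[0,1]$, form an isotopy of $\partial D$ inside $N$; by the isotopy extension theorem it extends to an ambient isotopy of $\mathbb R^d$ supported in $N$ and starting at the identity, and we let $D_\sigma$ be the image of $D$ under its time-one map. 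Then $D_\sigma$ is a $C^\infty$ bounded connected open set, equal to $D$ outside $N$, with boundary $\partial D_\sigma=\{\Phi(y,\sigma(y)):y\in\partial D\}$, and for $\|\sigma\|_{C^0}$ small enough $\overline{\mathcal V_-}\subset D_\sigma\subset\overline{D_\sigma}\subset\mathcal V_+$. Since $j_\sigma:y\mapsto\Phi(y,\sigma(y))$ is a diffeomorphism of $\partial D$ onto $\partial D_\sigma$, the function $f|_{\partial D_\sigma}$ is Morse if and only if $g_\sigma:=f\circ j_\sigma\in C^\infty(\partial D,\mathbb R)$ is a Morse function on the \emph{fixed} compact manifold $\partial D$; note that $g_0=f|_{\partial D}$.

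\textbf{A good finite-dimensional perturbation and transversality.} Using $\partial_t\Phi(y,0)=\nu(y)$, the linearisation of $\sigma\mapsto g_\sigma$ at $\sigma=0$ is, for $\varphi\in C^\infty(\partial D,\mathbb R)$,
$$\frac{d}{ds}\Big|_{s=0}g_{s\varphi}(y)=\big(\partial_n f(y)\big)\,\varphi(y).$$
Because $\partial_n f$ does not vanish on the compact set $\partial D$, we may pick $\phi_1,\dots,\phi_m\in C^\infty(\partial D,\mathbb R)$ whose differentials span $T_y^*\partial D$ at every $y\in\partial D$ (such a finite family exists by compactness of $\partial D$, e.g. local coordinate functions in finitely many charts cut off by a subordinate partition of unity), set $\psi_i:=\phi_i/\partial_n f$, and consider the family $\sigma_a:=\sum_{i=1}^m a_i\psi_i$ for $a=(a_1,\dots,a_m)$ in a small ball $B\subset\mathbb R^m$ around $0$. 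Form the smooth map $G:\partial D\times B\to T^*\partial D$, $G(y,a):=d_y g_{\sigma_a}(y)\in T_y^*\partial D$. The displayed formula gives $\partial_{a_i}G(y,a)|_{a=0}=d_y\big((\partial_n f)\,\psi_i\big)(y)=d\phi_i(y)$, and these span $T_y^*\partial D$; thus the $a$-derivatives at $a=0$ already cover the fibre directions of $T^*\partial D$, while the base directions are covered because $G(\cdot,a)$ is a section of $T^*\partial D$, so $G$ is a submersion near $\partial D\times\{0\}$. Shrinking $B$ if necessary (by compactness of $\partial D$), $G$ is transverse to the zero section of $T^*\partial D$ on all of $\partial D\times B$. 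By the parametric transversality theorem (a consequence of Sard's theorem), for Lebesgue-almost every $a\in B$ the section $y\mapsto G(y,a)=dg_{\sigma_a}(y)$ is transverse to the zero section, i.e. $g_{\sigma_a}$, hence also $f|_{\partial D_{\sigma_a}}$, is a Morse function. Fixing one such $a$ and setting $D':=D_{\sigma_a}$ yields, by the reduction step, a $C^\infty$ bounded connected open set with $\overline{\mathcal V_-}\subset D'$, $\overline{D'}\subset\mathcal V_+$ and $f|_{\partial D'}$ Morse.

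\textbf{Main difficulty.} The only point where the hypothesis on $D$ enters, and the main obstacle, is the submersion property of $G$: one must ensure that moving the boundary really perturbs $f|_{\partial D}$ in all tangential directions. This is exactly what $\partial_n f\neq 0$ delivers, since then an infinitesimal normal displacement of $\partial D$ by $\varphi$ changes the restricted function by $(\partial_n f)\varphi$, a nowhere-vanishing multiple of $\varphi$; the classical genericity of Morse functions via Sard's theorem then finishes the argument. The remaining ingredients (the normal tubular neighbourhood, the isotopy extension producing $D_\sigma$ with the right inclusions and connectedness, and the smoothness of $D_\sigma$) are standard, and the argument extends verbatim to the setting of Proposition~\ref{Lau1} once the statement is localised away from the part of $\partial D$ where $f$ is already Morse.
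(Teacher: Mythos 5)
Your proof is correct and follows essentially the same strategy as the paper's: perturb the boundary hypersurface through a finite-dimensional family, form the parametrized map $G$ into $T^*\partial D$, show it is a submersion near the zero-parameter, and invoke the parametric transversality theorem (Sard's theorem, after Thom) to conclude that almost every parameter yields a Morse restriction. The only differences are cosmetic: you deform along the unit normal $\nu$ with profiles $\psi_i=\phi_i/\partial_n f$, then use arbitrary $\phi_i$ whose differentials span $T^*\partial D$, whereas the paper deforms along $\nabla f$ (also transverse, by hypothesis) and calibrates the displacement so that $f$ on the perturbed surface equals exactly $f(x)+v\cdot x$, which is your choice with $\phi_i=x_i$; both choices make the $v$-derivative (resp.\ $a$-derivative) of $G$ cover the cotangent fibers, and both use precisely $\partial_n f\neq 0$ to do so.
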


\begin{remark}
We are thankful to Fran\c cois Laudenbach who gave us the main ingredient of the proof of Proposition~\ref{pr.Morsesigma}. The proof is inspired  by a method due to  Ren\'e Thom~\cite{thom} based on Sard's theorem~\cite{sard}, see~\cite[Section 5.6]{laudenbach2011transversalite}. 
\end{remark}

\begin{proof}
Let   $ \mathcal V_-$ and $ \mathcal V_+$ be two open subsets of $\mathbb R^d$ such that $\overline{ \mathcal V_-} \subset D \text{ and } \overline{ D} \subset \mathcal V_+$. 
Let us denote by  $S$ the boundary of $D$ which is a  smooth compact hypersurface of $\mathbb R^d$. 
 For $r>0$, one denotes    by $B(0,r) $ the ball of radius $r$ centred at $0$ in $\mathbb R^d$.  Let   $\mathcal V$ be a neighborhood of $S$
in $\mathbb R^{d}$.  
 By assumption on $S$, there exist  $\varepsilon_{0}>0$ and $\varepsilon_{1}>0$ such that the map
$$   (x,\lambda)\in  S\times (-\varepsilon_{0},\varepsilon_{0})     \mapsto x+\lambda
\nabla f(x)\in \mathcal V
$$
is well defined and is a diffeomorphism onto its image, and, for all $ (x,v) \in S\times B(0,\varepsilon_{1}) $, there exists a unique $\lambda(x,v)\in  
(-\varepsilon_{0},\varepsilon_{0})$ such that 
$$
   f\big(x+\lambda(x,v)\nabla f(x)\big) = f(x)+  v\cdot x\,.
$$
Moreover, for every $v\in B(0,\varepsilon_{1}) $, according to the implicit function theorem, the map $x\in S\mapsto \lambda(x,v)\in (-\varepsilon_{0},\varepsilon_{0})$
is smooth and then also is $x\in S\mapsto x+ \lambda(x,v)\nabla
f(x)\in \mathbb R^{d}$.
The latter application is then an injective immersion and hence, since $S$ is compact,
it follows that $S_{v}:=\{x+ \lambda(x,v)\nabla f(x)\}$ is a smooth compact   hypersurface. Up to choosing $\ve _1>0$ smaller, for any $v\in B(0,\ve_1)$, $S_{v}$ is the boundary of a $C^\infty$ open bounded  and connected  subset $D_v$ of~$\mathbb R^d$ such that 
$$\overline{ \mathcal V_-} \subset D_v \, \text{ and }  \, \overline{ D_v} \subset \mathcal V_+.$$
To prove Proposition~\ref{pr.Morsesigma},  it remains to show  that there exists   $v\in B(0,\varepsilon_{1})$
such that $f|_{S_{v}}$ is a Morse function. 
Let us   introduce
the function
$$
F:(x,v)\in S\times B(0,\varepsilon_{1})\mapsto  f|_{S_{v}}\big(x+\lambda(x,v)\nabla f(x)\big) =f(x)+  v\cdot x
\in \mathbb R\,.
$$
For all $x\in  S$ and for all $v\in B(0,\varepsilon_{1})$, let   $v^{T}_{x}\in T_xS$ and $v_{x}^{N}\in \mathbb R$ be such that    
\begin{equation}\label{vtt}
v=v^{T}_{x} + v_{x}^{N} n(x),
\end{equation}
where  we recall that $n(x)$ is the unit outward  normal vector to $D$ at $x\in  \pa D$. 
At  $(x,v)\in S\times B(0,\varepsilon_{1})$,   it holds  $\partial _{x}F (x,v): z \in T_xS\mapsto d_xf(x)z+v_x^T\cdot z$, where $\partial_xF(x,v)$ is the $x$-derivative of $F$ at $(x,v)$. 
The function
$G: S\times B(0,\varepsilon_{1})\to T_x^*S $ defined
by
$$G: (x,v)\mapsto (x, \partial_{x}F (x,v))$$
 is a submersion onto a small tube 
around the zero section of $T^*S$. 
This is obvious by considering the $v$-derivative of $G$. 
Hence, $G$ is transverse to the zero section $0_{T^{*}S}$ of $T^{*}S$ (see~\cite[Chapitre~5.1]{laudenbach2011transversalite} for the definition of transversality). Using the parametric transversality theorem   (which is a consequence of Sard's theorem, see for instance~\cite[Chapitre~5.3.1]{laudenbach2011transversalite}), one obtains that for almost every   $v\in B(0,\varepsilon_{1})$,
$\partial_{x}(F|_{S\times \{v\}})=d (f|_{S_{v}})$ is transverse  to $0_{T^{*}S}$,  which is equivalent to say that~$f|_{S_{v}}$ is a Morse function. 
This concludes the proof of Proposition~\ref{pr.Morsesigma}. 
\end{proof}
\noindent
The next proposition gives sufficient conditions on $D$ and $f$ to modify the result of Proposition~\ref{pr.Morsesigma} so that the perturbed domain $D'$ has the same boundary as $D$ on a prescribed subset $S_1'$ of $\partial D$ on which $f$ is already a Morse function.

\begin{proposition}\label{Lau1}
Let $f:\mathbb R^{d}\to \mathbb R$ be a $C^\infty$ function and 
$D$ be a $C^\infty$ open  bounded and connected  subset of $\mathbb R^d$. Let us assume that 
$$\forall x\in \pa D, \ \nabla f(x)\oplus T_x\pa D=\mathbb R^d.$$
Furthermore, let us assume that there exists  an open subset $S_1$ of $\pa D$ such that  $f: \overline {S_1}\to \mathbb R$ is a Morse function with no critical point on $\pa   S_1$.
Let us now consider an open set  $S_1'$ such that $\overline{S_1'}  \subset S_1$ and  $ f|_{\pa D}$ has no critical point on  $\overline{S_1\setminus S_1'}$.  
Then, for any open sets $ \mathcal V_-$ and $ \mathcal V_+$ such that $\overline{ \mathcal V_-} \subset D\cup S_1' \text{ and } \overline{ D} \setminus S_1' \subset \mathcal V_+$,    
there exists a $C^\infty$ open bounded  and connected  subset $D'$ of~$\mathbb R^d$ such that  $S_1'\subset \pa D'$, 
$$\overline{ \mathcal V_-} \subset D'\cup S_1', \ \overline{ D'}  \setminus S_1' \subset \mathcal V_+, \ \text{ and } \, f|_{\pa D'} \text{  is a Morse function}.$$
\end{proposition}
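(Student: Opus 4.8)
The plan is to mimic the proof of Proposition~\ref{pr.Morsesigma}, replacing the linear family of perturbations $x\mapsto v\cdot x$ by a finite-dimensional family $x\mapsto\sum_{i=1}^N v_i\varphi_i(x)$, $v=(v_1,\dots,v_N)\in\mathbb R^N$, with functions $\varphi_i\in C^\infty(\pa D)$ chosen so that: (i) each $\varphi_i$ vanishes on a fixed neighbourhood of $\overline{S_1'}$ in $\pa D$, so that the perturbed hypersurface constructed below still contains $S_1'$; and (ii) at every point $x$ of $\pa D$ outside a (small) neighbourhood of $\overline{S_1'}$ — in particular near every critical point of $f|_{\pa D}$ lying in $\pa D\setminus\overline{S_1}$ — the linear map $v\mapsto\sum_i v_i\,d\varphi_i(x)$ is onto $T_x^*\pa D$. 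Such functions exist by a standard construction: cover the relevant compact subset of $\pa D$ by finitely many coordinate charts disjoint from a neighbourhood of $\overline{S_1'}$, and take products of bump functions and local coordinate functions.

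The key preliminary point, and the place where the two structural hypotheses on $S_1,S_1'$ enter, is the following: since $f:\overline{S_1}\to\mathbb R$ is Morse with no critical point on $\pa S_1$, and $f|_{\pa D}$ has no critical point on $\overline{S_1\setminus S_1'}$, every critical point of $f|_{\pa D}$ lies either in $S_1'$ or in the open set $\pa D\setminus\overline{S_1}$. Consequently one can fix an open neighbourhood $U$ of $\overline{S_1'}$ with $\overline U\subset S_1$ such that the (compact) set of critical points of $f|_{\pa D}$ not in $U$ is contained in an open set $Z$ with $\overline Z\subset\pa D\setminus\overline{S_1}$; one then requires $\varphi_i\equiv 0$ on $U$ and the spanning property~(ii) to hold on $\overline Z$, which is compatible since $\overline Z\cap\overline U=\emptyset$. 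On $U$ the perturbation is switched off, so nothing needs to be checked there beyond the fact that $f|_U$ is already Morse (as $U\subset S_1$); on $\overline Z$ the spanning property lets the transversality argument run; and on $\pa D\setminus(U\cup Z)$, $f|_{\pa D}$ has no critical point, so neither will $F(\cdot,v)$ for $v$ small.

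One then repeats the tube construction of Proposition~\ref{pr.Morsesigma}: using $\nabla f(x)\oplus T_x\pa D=\mathbb R^d$ one parametrises a neighbourhood of $S:=\pa D$ by $(x,\lambda)\mapsto x+\lambda\nabla f(x)$; for $v$ in a small ball $B(0,\varepsilon_1)\subset\mathbb R^N$, one defines $\lambda(x,v)$ by $f\big(x+\lambda(x,v)\nabla f(x)\big)=f(x)+\sum_i v_i\varphi_i(x)$ via the implicit function theorem (legitimate since $\nabla f\neq 0$ on $\pa D$), and sets $S_v:=\{x+\lambda(x,v)\nabla f(x):x\in\pa D\}$. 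As in Proposition~\ref{pr.Morsesigma}, up to shrinking $\varepsilon_1$, $S_v$ is a smooth compact hypersurface bounding a $C^\infty$ open bounded connected set $D'_v$; since $\varphi_i\equiv0$ near $\overline{S_1'}$ one has $\lambda(\cdot,v)\equiv0$ there, hence $S_1'\subset\pa D'_v$; and for $v$ small the inclusions $\overline{\mathcal V_-}\subset D'_v\cup S_1'$ and $\overline{D'_v}\setminus S_1'\subset\mathcal V_+$ follow from $\overline{\mathcal V_-}\subset D\cup S_1'$, $\overline D\setminus S_1'\subset\mathcal V_+$ and continuity in $v$.

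Finally, under the diffeomorphism $x\mapsto x+\lambda(x,v)\nabla f(x)$ the function $f|_{S_v}$ pulls back to $F(\cdot,v)$, where $F(x,v):=f(x)+\sum_i v_i\varphi_i(x)$, so it remains to produce arbitrarily small $v$ with $F(\cdot,v):\pa D\to\mathbb R$ Morse. On $U$, $F(\cdot,v)=f$ is Morse; on the complement of a slightly smaller neighbourhood $U''$ of $\overline{S_1'}$ (with $\overline{U''}\subset U$), for $v$ small every critical point of $F(\cdot,v)$ lies in $Z$, and there one invokes the parametric transversality theorem exactly as in Proposition~\ref{pr.Morsesigma}: the map $(x,v)\mapsto\big(x,\pa_x F(x,v)\big)$ is, over $\overline Z$, transverse to the zero section of $T^*\pa D$ (surjectivity of its $v$-derivative comes from the spanning property), hence by Sard's theorem (\cite[Chapitre 5.3]{laudenbach2011transversalite}) $d\big(F(\cdot,v)\big)$ is transverse to the zero section over $\overline Z$ for almost every $v\in B(0,\varepsilon_1)$, i.e.\ the critical points of $F(\cdot,v)$ there are non-degenerate. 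Choosing such a $v$ and setting $D':=D'_v$ concludes the proof. The only genuine difficulty beyond Proposition~\ref{pr.Morsesigma} is the neighbourhood bookkeeping of the second paragraph — arranging that the zone where the perturbation is turned off or is ramping up from zero (where the spanning property~(ii) necessarily fails) is contained in the region where $f|_{\pa D}$ is already Morse or has no critical point at all; this is precisely what the hypotheses on $S_1$ and $S_1'$ guarantee.
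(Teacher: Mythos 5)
Your proof is correct and follows essentially the same strategy as the paper: perturb $\partial D$ in the $\nabla f$ direction via the implicit function theorem, switch the perturbation off on a neighbourhood of $\overline{S_1'}$ so that $S_1'$ stays in the boundary, use the hypothesis that $f|_{\partial D}$ has no critical point on $\overline{S_1\setminus S_1'}$ to control the ramp-up zone where the cutoff is neither $0$ nor $1$, and invoke the parametric transversality theorem (Sard) on the far region where the perturbation is fully on. The only difference is a cosmetic one in the choice of perturbation family: the paper uses the single explicit family $F(x,v)=f(x)+\chi(x)\,v\cdot x$ with $v\in\mathbb R^d$ and one global cutoff $\chi$ (equal to $1$ on $S\setminus S_1$, $0$ near $\overline{S_1'}$), exploiting that $v\mapsto v_x^T$ is onto $T_x^*S$; you instead build a finite family $\varphi_1,\dots,\varphi_N$ from bump functions times coordinate functions and require spanning only on a neighbourhood $Z$ of the critical points outside $S_1$, which is slightly more abstract but delivers the same submersion/transversality argument.
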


 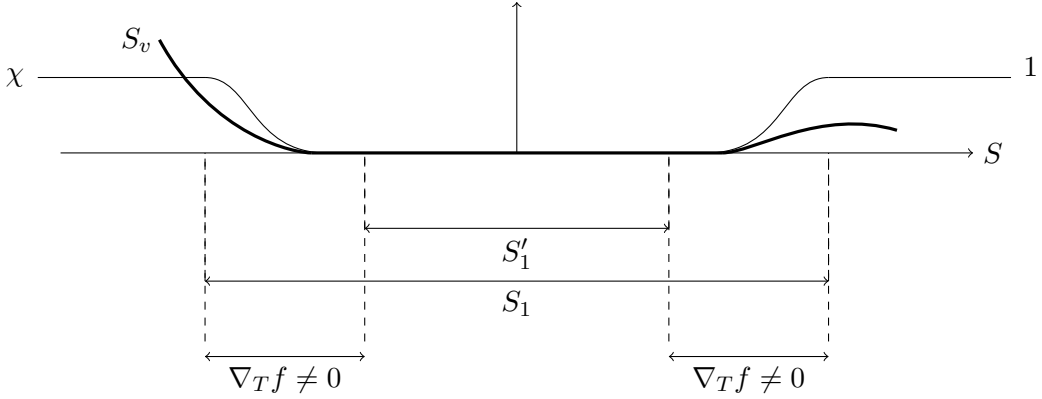
\begin{figure}[h!]
  \begin{center}
 \begin{tikzpicture}
\draw[->] (-6,0)--(6,0) node[right] {$S$} ;
 \draw[->] (0,0)--(0,2);
\draw (6.99 ,1.4) node [anchor=north east] {1};
\draw[dashed] (-2,-1)--(-2,0);
\draw[dashed] (2,-1)--(2,0);
\draw[dashed] (-4.1,-1.7)--(-4.1,0);
\draw[dashed] (4.1,-1.7)--(4.1,0);
\draw[<->](-2,-1)--(2,-1)  node[midway,below] {$S_1'$} ;
\draw[very thick] (-4.7,1.5) ..controls (-4,0.2) and  (-2.9,0) .. (-2.7,0);
\draw[very thick] (5,0.3) ..controls (4,0.6) and  (3.1,0) .. (2.7,0);
   \draw  (-5,1.5) node[]{$S_v$};
\draw[very thick](-2.7,0)--(2.7,0);
\draw[<->](-4.1,-1.7)--(4.1,-1.7)  node[midway,below] {$S_1$} ;
\draw (-4.1,1) ..controls (-3.5,1) and  (-3.5,0) .. (-2.5,0);
\draw (4.1,1) ..controls (3.5,1) and  (3.5,0) .. (2.5,0);
\draw (-6.3,1) -- (-4.1,1);
\draw (6.5,1) -- (4.1,1);
\draw[dashed] (-4.1,-2.5)--(-4.1,0);
\draw[dashed] (-2,-2.5)--(-2,0);
\draw[<->] (-4.1,-2.7)--(-2,-2.7) node[midway,below] {$\nabla_Tf\neq 0$} ;
\draw[dashed] (4.1,-2.5)--(4.1,0);
\draw[dashed] (2,-2.5)--(2,0);
\draw[<->] (4.1,-2.7)--(2,-2.7) node[midway,below] {$\nabla_Tf\neq 0$} ;

   \draw  (-6.6,1) node[]{$\chi$};
\end{tikzpicture}
\caption{The support of $\chi$ on $S$, the compact sets $S_1$ and  $S_1'$, and the hypersurface $S_v$.}
 \label{fig:chii}
  \end{center}
\end{figure}

\begin{proof}
Let   $ \mathcal V_-$ and $ \mathcal V_+$ be two open subsets of $\mathbb R^d$ such that $\overline{ \mathcal V_-} \subset D\cup S_1' \text{ and } \overline{ D} \setminus S_1' \subset \mathcal V_+$.  
Let us denote by  $S$ the boundary of $D$. The submanifold $S$ is a  smooth compact hypersurface of $\mathbb R^d$. 
 Let us introduce a function $\chi\in C^\infty(S)$ such that $\chi(x)=1$ for all   $x\in S\setminus S_1$ and  $\chi(x)=0$ for all   $x\in \mathcal V_{S_1'}$ where $\mathcal V_{S_1'}$ is an open neighborhood of $\overline{S_1'}$ in $S$ such that $\overline{\mathcal V_{S_1'}}\subset S_1$. To prove Proposition~\ref{Lau1}, one  uses the cutoff function $\chi$  in the definition of $\lambda(x,t)$   to ensure that $S_1'\subset S_v$ (see the proof of Proposition~\ref{pr.Morsesigma} for the notation $S_v$). This is made as follows. 
Let us first consider   $\varepsilon_{0}>0$ and $\varepsilon_{1}>0$ such that the map
$$   (x,\lambda)\in  S\times (-\varepsilon_{0},\varepsilon_{0})     \mapsto x+\lambda
\nabla f(x)\in \mathcal V
$$
is well defined and is a diffeomorphism onto its image, and, for all $ (x,v) \in S\times B(0,\varepsilon_{1}) $, there exists a unique $\lambda(x,v)\in  
(-\varepsilon_{0},\varepsilon_{0})$ such that 
$$
   f\big(x+\lambda(x,v)\nabla f(x)\big) = f(x)+  \chi(x)\, v\cdot x\,.
$$
Notice that $\lambda(x,v)=0$ for all $x\in \mathcal V_{S_1'}$ and $v\in B(0,\varepsilon_{1})$ (since $\chi=0$ on $\mathcal V_{S_1'}$). Thus, for all $v\in B(0,\varepsilon_{1})$,  $ \mathcal V_{S_1'}\subset S_v$ which implies that $S_1'\subset  S_v$. 
Again, $S_{v}:=\{x+ \lambda(x,v)\nabla f(x)\}$ is a smooth compact  hypersurface. 
A schematic representation of the function $\chi$ and the hypersurface $S_v$ are given in Figure~\ref{fig:chii}. 
Up to choosing $\ve _1>0$ smaller, for any $v\in B(0,\ve_1)$, $S_{v}$ is the boundary of a $C^\infty$ open bounded  and connected  subset $D_v$ of~$\mathbb R^d$ such that, since  $\mathcal V_{S_1'}\subset S_v$,
$$ \overline{ \mathcal V_-} \subset D_v\cup S_1', \text{ and }  \overline{ D_v}  \setminus S_1' \subset \mathcal V_+.$$
 Let us now show that there exists   $v\in B(0,\varepsilon_{1})$
such that $f|_{S_{v}}$ is a Morse function. 
For that purpose, we    consider 
the function
$$
F:(x,v)\in S\times B(0,\varepsilon_{1})\mapsto  f|_{S_{v}}\big(x+\lambda(x,v)\nabla f(x)\big) =f(x)+ \chi(x)\,  v\cdot x
\in \mathbb R\,,
$$
and the function  
$G: S \times B(0,\varepsilon_{1})\to T_x^*S $ defined
by $G: (x,v)\mapsto (x, \partial_{x}F (x,v))$.
  Notice that   for all  $v\in    B(0,\varepsilon_{1})$, $x\in \overline{S_1'}\mapsto F(x,v)=f(x)$ is already, by assumption,  a Morse function (with no critical point on $\pa S_1'$). 
   This implies that $G$ is  transverse to the zero section  $0_{T^{*}S}$ of $T^{*}S$ along $S_1'\times B(0,\varepsilon_{1})$.
  Thus,  to prove  Proposition~\ref{Lau1}, it remains to study the function   $x\in  S\setminus S'_1\mapsto F(x,v)$, for $v\in    B(0,\varepsilon_{1})$. 
For  $(x,v) \in \overline{S_1\setminus S'_1} \times B(0,\varepsilon_{1})$ and  for all $z\in T_xS$, it holds:
$$\partial_{x}F (x,v)z= d_xf(x)z+ O(\Vert v\Vert)\, z.$$ 
 Since by assumption $d_xf(x)\neq 0_{T^{*}_xS}$ for all $x$ belonging to the compact set $\overline{ S_1\setminus S_1'}$, one has, up to choosing $\ve_1>0$  smaller: for all $x\in \overline{S_1\setminus S_1'}$ and $v\in B(0,\varepsilon_{1})$, $\partial_{x}F (x,v)\neq 0_{T^{*}_xS}$.  
Finally, for  $(x,v) \in S\setminus S_1 \times B(0,\varepsilon_{1})$ and  for all $z\in T_xS$, it holds:
$$G(x,v)= (x,d_xf(x)z+  v_x^T\cdot z),$$
where $v_x^T$ is defined by~\eqref{vtt}.  
Thus, the function $G: S\setminus S_1\times B(0,\varepsilon_{1})\to T_x^*S $ is a submersion onto a small tube 
around the zero section $0_{T^{*}S}$ of $T^*S$. This implies that $G$ is  transverse to the zero section   of $T^{*}S$ along $S\setminus S_1\times B(0,\varepsilon_{1})$. 
 In conclusion, the function  $G: S \times B(0,\varepsilon_{1})\to T_x^*S $ is  transverse to the zero section of  $T^{*}S$.  The parametric transversality theorem  implies that for almost every   $v\in B(0,\varepsilon_{1})$,
$\partial_{x}(F|_{S\times \{v\}})=d (f|_{S_{v}})$ is transverse  to $0_{T^{*}S}$,  which is equivalent to~$f|_{S_{v}}$ is a Morse function. 
 This concludes the proof of     Proposition \ref{Lau1}. 
 \end{proof}

\subsection{Generalization of Theorems~\ref{thm.main} and~\ref{thm.2} }
\label{sec.gene_fin}
 \noindent
 In view of the proof of Theorem~\ref{thm.2},
we have the following generalization of Theorems~\ref{thm.main} and~\ref{thm.2}.
\begin{theorem}\label{thm.4}
Let us assume that  \eqref{H-M} holds. Let $\lambda\in \mathbb R$ and $\ft C_1,\ldots,\ft C_m\in \mathcal C$ be $m$ ($m\ge 1$) connected components of $\{f<\lambda\}$  such that:
\begin{equation}\label{eq.condition1}
\bigcup_{j=1}^m\overline{\ft C_j} \text{ is a connected component of } \{f\le \lambda\}
\end{equation}
and such that, up to reordering $\ft C_1,\ldots,\ft C_m$, 
$$\pa \ft C_1\cap \pa \Omega\neq \emptyset \  \text{ and } \ \forall j\in\{ 2,\ldots,m\}, \ \min_{ \ft C_1}f<\min_{   \ft C_j}f.$$
 Let  $F\in L^{\infty}(\partial \Omega,\mathbb R)$. For all $ j\in\{ 1,\ldots,m\}$ and $z\in \pa \ft C_j\cap \pa \Omega$, let  $\Sigma_{z}$ be an  open subset of~$\pa \Omega$ such that  $z\in \Sigma_z$ and such that the $\Sigma_{z}$'s are pairwise disjoint.  
    Let~$K$ be a compact subset of $\Omega$ such
  that $ K\subset \mathcal A(\ft C_{1})$ and  $x\in K$. 
    Then:
  \begin{enumerate} 
  \item 
There exists $c>0$ such that in the limit  $h\to 0$:
\begin{equation} \label{eq.t1'}
\mathbb E_x \left [ F\left (X_{\tau_{\Omega}} \right )\right]=\sum \limits_{z\in    \cup_{j=1}^m \pa \Omega \cap \pa \ft C_j  } 
\mathbb E_x \left [( \mathbf{1}_{\Sigma_{z}}F)\left (X_{\tau_{\Omega}} \right )\right]  +O\big (e^{-\frac ch}\big )
 \end{equation}
 and
 \begin{equation} \label{eq.t2'}
\sum \limits_{z\in    \cup_{j=2}^m \pa \Omega \cap \pa \ft C_j  } 
\mathbb E_x \left [ (\mathbf{1}_{\Sigma_{z}}F)\left (X_{\tau_{\Omega}} \right )\right] =O\big (h^{\frac14} \big ),
 \end{equation}
 uniformly in $x\in K$.
 \item When for some $z\in \pa \ft C_1\cap \pa \Omega$ the function  $F$ is $C^{\infty}$ in a neighborhood  of $z$, one has when $h\to 0$:
\begin{equation} \label{eq.t3'}
 \mathbb E_x \left [ (\mathbf{1}_{\Sigma_{z}}F)\left (X_{\tau_{\Omega}} \right )\right]= \frac{ F(z)\,  \partial_nf(z)      }{  \sqrt{ {\rm det \ Hess } f|_{\partial \Omega}   (z) }  } \left (\sum \limits_{y\in\pa \ft C_1 \cap \pa \Omega  } \frac{  \partial_nf(y)      }{  \sqrt{ {\rm det \ Hess } f|_{\partial \Omega}   (y) }  }\right)^{-1}  \!\!\!\!\!\!+ O(h^{\frac14}),
\end{equation}
uniformly in~$x \in K$.
  \item 
When $m=1$, 
 the remainder term  $O\big (h^{\frac14}\big )$  in \eqref{eq.t3'} is of   order $O(h)$  and admits a full asymptotic expansion in~$h$. 
\end{enumerate} 

\end{theorem}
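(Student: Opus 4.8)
The plan is to mimic, almost verbatim, the two-step strategy used to prove Theorem~\ref{thm.2}: construct an auxiliary subdomain $\Omega_{\mbf C}$ of $\Omega$ on which $f$ enjoys the assumptions~\eqref{H-M} and~\eqref{eq.hip1}--\eqref{eq.hip3} (and also~\eqref{eq.hip4} when $m=1$), with $\ft C_{\ft{max}}=\ft C_1$, apply Theorem~\ref{thm.main} on $\Omega_{\mbf C}$, and transfer the conclusion back to $\Omega$ via the comparison $\tau_{\Omega_{\mbf C}}\le\tau_\Omega$ together with a Freidlin--Wentzell large deviations argument.

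\textbf{Step 1: the subdomain.} First I would build a $C^\infty$ connected open set $\Omega_{\mbf C}\subset\Omega$ containing $\bigcup_{j=1}^m\overline{\ft C_j}$ and satisfying the analogue of~\eqref{eq.OmegaC}--\eqref{eq.OmegaC2} relative to the family $(\ft C_j)_{1\le j\le m}$: $\pa\Omega_{\mbf C}\cap\pa\Omega$ is a neighborhood of $\bigcup_j(\pa\ft C_j\cap\pa\Omega)$ in $\pa\Omega$, $\argmin_{\pa\Omega_{\mbf C}}f=\bigcup_j(\pa\ft C_j\cap\pa\Omega)$ and $\min_{\pa\Omega_{\mbf C}}f=\lambda$, $\{x\in\overline{\Omega_{\mbf C}}:f(x)<\lambda\}=\bigcup_j\ft C_j$, the critical points of $f$ in $\overline{\Omega_{\mbf C}}$ all belong to $\bigcup_j\ft C_j$, $|\nabla f|\ne0$ on $\pa\Omega_{\mbf C}$, and $f|_{\pa\Omega_{\mbf C}}$ is a Morse function. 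This is the construction of Step~1 of the proof of Theorem~\ref{thm.2} applied to the compact \emph{connected} set $\bigcup_j\overline{\ft C_j}$ (connected by~\eqref{eq.condition1}): one covers $\bigcup_j\pa\ft C_j$ by small balls $B(z,\varepsilon_z)$ with $B(z,\varepsilon_z)\cap\{f\le\lambda\}\subset\bigcup_j\overline{\ft C_j}$ --- near $z\in\pa\ft C_j\cap\Omega$ this holds whether or not $z$ is a (necessarily separating) saddle of $f$ at level $\lambda$, since by~\eqref{eq.condition1} the local components of $\{f<\lambda\}$ around $z$ lie in $\bigcup_j\ft C_j$, and near $z\in\pa\ft C_j\cap\pa\Omega$ one argues as in case~2 of that construction since $z\in\ft U_1^{\pa\Omega}$ --- thereby obtaining a neighborhood $\ft V$ of $\bigcup_j\overline{\ft C_j}$; one then smooths the connected component $\mbf C(\lambda+r)$ of $\{f<\lambda+r\}$ containing $\bigcup_j\ft C_j$ (for $r>0$ small enough that $\overline{\mbf C(\lambda+r)}\subset\ft V$) near $\pa\Omega$ to remove corners and restore $\pa_nf>0$, exactly as for $D_{\ft C}$; finally one invokes Proposition~\ref{Lau1} with $\mathcal V_-=\bigcup_j\ft C_j$, $\mathcal V_+=\ft V$, $D$ the smoothed domain and $S_1,S_1'$ thin collars of its boundary inside $\Omega$, to perturb $\pa D$ away from $\bigcup_j(\pa\ft C_j\cap\pa\Omega)$, keeping $\pa_nf>0$, so that $f|_{\pa\Omega_{\mbf C}}$ becomes Morse.

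\textbf{Step 2: geometry on $\Omega_{\mbf C}$ and application of Theorem~\ref{thm.main}.} Then I would check that $f:\overline{\Omega_{\mbf C}}\to\mathbb R$ satisfies~\eqref{H-M} and that, relative to $\Omega_{\mbf C}$, $\mathcal C=\{\ft C_1,\dots,\ft C_m\}$: every local minimum $y$ of $f$ in $\Omega_{\mbf C}$ lies in some $\ft C_j$, and any continuous path from $y$ to $\pa\Omega_{\mbf C}$ has maximal value exactly $\lambda$ (it cannot be smaller, since the path must leave $\bigcup_i\ft C_i$, whose closure meets $\overline{\Omega_{\mbf C}}$ only in $\{f\le\lambda\}$, and it can be achieved by descending through the level-$\lambda$ saddles joining the $\ft C_i$'s into $\ft C_1$ and exiting at $\pa\ft C_1\cap\pa\Omega$), so $\ft H_f(y)=\lambda$ and $\ft C(y)=\ft C_j$. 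Since $\min_{\ft C_1}f<\min_{\ft C_j}f$ for $j\ge2$, $\ft C_1$ is the unique deepest well, giving~\eqref{eq.hip1} with $\ft C_{\ft{max}}=\ft C_1$; $\pa\ft C_1\cap\pa\Omega\ne\emptyset$ gives~\eqref{eq.hip2}; $\pa\ft C_1\cap\pa\Omega_{\mbf C}=\pa\ft C_1\cap\pa\Omega\subset\argmin_{\pa\Omega_{\mbf C}}f$ gives~\eqref{eq.hip3}; and $\ft U_1^{\pa\Omega_{\mbf C}}\cap\argmin_{\pa\Omega_{\mbf C}}f=\bigcup_j(\pa\ft C_j\cap\pa\Omega)$, its subset on $\pa\ft C_{\ft{max}}$ being $\pa\ft C_1\cap\pa\Omega$. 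When $m=1$, \eqref{eq.condition1} says $\overline{\ft C_1}$ is a connected component of $\{f\le\lambda\}$, which precludes any separating saddle point of $f$ on $\pa\ft C_1\cap\Omega_{\mbf C}$, so~\eqref{eq.hip4} holds and item~3 here will follow from item~3 of Theorem~\ref{thm.main}; for $m\ge2$ the level-$\lambda$ saddles joining $\ft C_1$ to the other wells sit on $\pa\ft C_1\cap\Omega_{\mbf C}$ and are separating, so~\eqref{eq.hip4} genuinely fails, consistently with item~3 being claimed only for $m=1$. Applying Theorem~\ref{thm.main} on $\Omega_{\mbf C}$ with the same compact $K$ (contained in $\mathcal A_{\Omega_{\mbf C}}(\ft C_1)$ when $K\subset\ft C_1$, the gradient flow keeping $\ft C_1$ invariant) and the prescribed pairwise disjoint $\Sigma_z$'s yields~\eqref{eq.t1'}--\eqref{eq.t3'} with $\tau_\Omega$ replaced by $\tau_{\Omega_{\mbf C}}$, together with $\sup_{x\in K}\mathbb P_x[X_{\tau_{\Omega_{\mbf C}}}\in\Sigma]\le e^{-c/h}$ for every open $\Sigma\subset\pa\Omega_{\mbf C}$ with $\min_{\overline\Sigma}f>\lambda$, in particular for $\Sigma=\pa\Omega_{\mbf C}\setminus\pa\Omega$.

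\textbf{Step 3: back to $\Omega$, enlargement to $\mathcal A(\ft C_1)$, and the main obstacle.} Finally I would pass to $\tau_\Omega$ as in Step~2 of the proof of Theorem~\ref{thm.2}: since $\tau_{\Omega_{\mbf C}}\le\tau_\Omega$ with equality precisely when $X_{\tau_{\Omega_{\mbf C}}}\in\pa\Omega_{\mbf C}\cap\pa\Omega$, and since $\{X_{\tau_{\Omega_{\mbf C}}}\in\pa\Omega_{\mbf C}\setminus\pa\Omega\}$ has probability $O(e^{-c/h})$, the two exit laws agree up to $O(e^{-c/h})$ on $\bigcup_j(\pa\ft C_j\cap\pa\Omega)$, which gives~\eqref{eq.t1'}--\eqref{eq.t3'} (and item~3 for $m=1$) for $X_0=x\in K\subset\ft C_1$; the extension from $F\equiv1$ near $z$ to general bounded $F$ smooth near $z$ is the cutoff argument of Section~\ref{sec:end_proof}, and the enlargement from $K\subset\ft C_1$ to a general compact $K\subset\mathcal A(\ft C_1)$ uses the Freidlin--Wentzell estimate~\eqref{eq.WFr-est} as at the end of the proof of Theorem~\ref{thm.2} and in Step~2 of the proof of Proposition~\ref{pr.exp-qsd-x}: the flow $\varphi_t$ carries $K$ into $\ft C_1$ in finite time $T_K$, so with probability $1-O(e^{-\eta_K/h})$ the process has not left $\Omega$ before $T_K$ and satisfies $X_{T_K}\in\ft C_1\subset\Omega_{\mbf C}$, and the Markov property reduces to the case just treated. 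The only genuinely new ingredient, and the main (mild) difficulty, is Step~1 for $m\ge2$: one must build $\Omega_{\mbf C}$ around the possibly intricate connected union $\bigcup_j\overline{\ft C_j}$ --- which now contains separating saddle points of $f$ at the common level $\lambda$ --- and certify that the resulting subdomain has $\mathcal C=\{\ft C_1,\dots,\ft C_m\}$ with $\ft C_1$ as its unique deepest well; everything else is a line-by-line repetition of the proof of Theorem~\ref{thm.2}.
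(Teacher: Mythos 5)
Your proof is correct and follows essentially the same route as the paper's: build $\Omega_{\widetilde{\ft C}}$ around $\widetilde{\ft C}=\bigcup_j\ft C_j$ exactly as in the proof of Theorem~\ref{thm.2}, verify that $f$ on $\Omega_{\widetilde{\ft C}}$ satisfies~\eqref{H-M} and~\eqref{eq.hip1}--\eqref{eq.hip3} with $\ft C_{\ft{max}}=\ft C_1$ (and~\eqref{eq.hip4} when $m=1$), and transfer back via $\tau_{\Omega_{\widetilde{\ft C}}}\le\tau_\Omega$ together with the Freidlin--Wentzell enlargement from $\ft C_1$ to $\mathcal A(\ft C_1)$. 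You correctly identify the only genuinely new point --- the existence of a tubular neighbourhood $\ft V_{\widetilde{\ft C}}$ of $\overline{\widetilde{\ft C}}$ in $\overline\Omega$ even though $\pa\widetilde{\ft C}$ now meets critical points of $f$ at level $\lambda$ --- and your justification (the two local components of $\{f<\lambda\}$ near such a point both lie in $\bigcup_j\ft C_j$ by~\eqref{eq.condition1}) matches the paper's. One small inaccuracy in wording: your parenthetical ``(necessarily separating)'' is not right --- a critical point on $\pa\ft C_j\cap\Omega$ at level $\lambda$ can be a non-separating saddle (both local branches of $\{f<\lambda\}$ in the same $\ft C_j$) or even of index $\ge2$; the paper explicitly treats the non-separating case. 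This does not affect the validity of your argument, since what you actually use is only that the local components lie in $\bigcup_j\ft C_j$, which holds in every case.
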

\noindent
Theorem~\ref{thm.4} is a generalization of Theorem~\ref{thm.main} because we do not assume that $\pa \ft C_1\cap \pa \Omega\subset \argmin_{\pa \Omega}f$. 
Theorem~\ref{thm.4} is also a generalization of Theorem~\ref{thm.2}  since, in the framework of Theorem~\ref{thm.4}, when $m\ge 2$,  $\vert \nabla f \vert =0$ on $\pa \ft C_1\cap\pa   \ft C_j$, for $j=2,\ldots,m$, and thus Theorem~\ref{thm.2}   with $\ft C=\ft C_1$ does not apply. 
\begin{proof}
Let us denote by $\widetilde{\ft C}=\ft C_1\cup\ldots\cup \ft  C_m$. 
The proof of Theorem~\ref{thm.4}   consists in applying Theorem~\ref{thm.main} to a suitable subdomain $\Omega_{\widetilde {\ft C}}$ of $\Omega$ containing $\widetilde{\ft C}$.
This proof  is the same as the one made to prove Theorem~\ref{thm.2} except that the justification of the existence of the neighborhood  $\ft V_{\widetilde {\ft C}}$  of $\overline{\widetilde{\ft C}}$ in $\overline \Omega$ such that 
\begin{equation}\label{eq.vvc}
\vert\nabla f\vert \neq   0  \text{ on } \overline{\ft V_{\widetilde {\ft C}} }\setminus \overline{ \widetilde{\ft C} }, \ \  \ft V_{\widetilde {\ft C}} \cap \{f<\lambda \}= \widetilde{\ft C} \ \text{ and } \ \ft V_{\widetilde {\ft C}} \cap \{f\le\lambda \}=\overline{ \widetilde{\ft C} }
 \end{equation}
 is slightly different (see more precisely~\eqref{eq.vC-s} and~\eqref{eq.vC-s2} above).  
Actually, the only difference with the proof of  Theorem~\ref{thm.2} is that under the assumptions of  Theorem~\ref{thm.4},  it may  exist $z\in \pa \ft C_1\cup\ldots\cup \pa \ft  C_m$ such that   $\vert \nabla f\vert (z) =0$. Then, by assumption, $z\in \Omega$. Let $i\in \{1,\ldots,m\}$ be such that $z\in \pa \ft C_i$.    If $z$ is not a separating saddle point of $f$, there exists~$\ve_z>0$ such that $B(z,\ve_z)\cap  \{f<\lambda \}= B(z,\ve_z)\cap  \ft C_i $ and $B(z,\ve_z)\cap  \{f\le\lambda \}= B(z,\ve_z)\cap \overline{\ft C_i}$. Else, if $z$ is  a separating saddle point of $f$, 
 from~\eqref{eq.condition1},   there exists $j \neq i$ ($j\in \{1,\ldots,m\}$), $z\in \pa \ft C_i\cap \pa \ft C_j$. Thus, there exists~$\ve_z>0$ such that $B(z,\ve_z)\cap  \{f<\lambda \}= B(z,\ve_z)\cap (\ft C_i\cup \ft C_j)$. Moreover,   $B(z,\ve_z)\cap  \{f\le\lambda \}= B(z,\ve_z)\cap (\overline{\ft C_i}\cup \overline{\ft C_j})$. Together with the analysis of the local structure of $\{f<\lambda\}$ near   the points $z\in \pa \ft C_1\cup\ldots\cup \pa \ft  C_m$ such that $\vert \nabla f \vert (z)\neq 0$ (see items 1 and 2 just   before~\eqref{eq.GAMMAC01}), this justifies the existence of neighborhood  $\ft V_{\widetilde {\ft C}}$  of $\overline{\widetilde{\ft C}}$ in $\overline \Omega$ satisfying~\eqref{eq.vvc}.  The end of the proof then follows exactly the same lines as the proof of Theorem~\ref{thm.2}. 
\end{proof}
\noindent
As an illustration, here  are some simple examples of   results which can be obtained by Theorem~\ref{thm.4}   but not by Theorems~\ref{thm.main} and~\ref{thm.2}. In Figure~\ref{fig:exemple1}, Theorems~\ref{thm.main} and~\ref{thm.2} do  not apply. 
However, in the example depicted in Figure~\ref{fig:exemple1}, item 1 in Theorem~\ref{thm.4} implies that  in the limit $h\to 0$, for all $x\in \ft C_{\ft{max}}=(z_1,z)$, there exists $c>0$ such that $\mathbb P_x[X_{\tau_\Omega} =z_2]=O(e^{-\frac ch})$, and thus $\mathbb P_x[X_{\tau_\Omega} =z_1]= 1+O(e^{-\frac ch})$. 
 \begin{figure}[h!]
\begin{center}
\begin{tikzpicture}[scale=0.8]
\coordinate (b1) at (-0.66,4);
\coordinate (b2) at (1,-0.04);
\coordinate (b21) at (2.8,3.9);
\coordinate (b22) at (3.5,2.2);
\coordinate (b3) at (5,5.8);
\coordinate (b4) at (6.8,2);
\draw [black!100, in=150, out=-10, tension=10.1]
  (b1)[out=-20]   to  (b2) to (b21) to (b22) to (b3) to (b4);
  \draw [dashed,-]   (-1,4) -- (7,4) ;
   \draw [dashed,-]   (-1,-0.1) -- (7,-0.1) ;
 
   \draw (-3.4,-0.1) node[]{$ \{f=\min_{\overline\Omega} f\}$};
     \draw [thick, densely dashed,<->]   (-0.4,4.2) -- (2.33,4.2) ;
   \draw [thick, densely dashed,<->]   (2.76,4.2) -- (4.15,4.2) ;
     \draw (0.7,4.6) node[]{$ \ft C_{\ft{max}}$};
          \draw (3.4,4.6) node[]{$ \ft C_{2}$};
 
    \tikzstyle{vertex}=[draw,circle,fill=black,minimum size=4pt,inner sep=0pt]
 \draw (b1)  node[vertex,label=south: {$z_1$}](v){}; 
 \draw (2.6,4) node[vertex,label=north: {$z$}](v){};
\draw (1.2,-0.09) node[vertex,label=south: {$x_1$}](v){}; 
\draw (b4) node[vertex,label=south: {$z_2$}](v){}; 
\draw (3.7,2.16) node[vertex,label=south: {$x_2$}](v){}; 
    \end{tikzpicture}
\caption{A one dimensional case when \eqref{H-M}, \eqref{eq.hip1}, and \eqref{eq.hip2}    are  satisfied with $\mathcal C=\{\ft C_{\ft{max}}, \ft C_2\}$. The assumption   \eqref{eq.hip3} is not satisfied because $\pa \ft C_{\ft{max}} \cap \pa \Omega=\{z_1\}$ and $f(z_1)>f(z_2)=\min_{\pa \Omega} f$. 
 } 
 \label{fig:exemple1}
 \end{center}
\end{figure}
In Figure~\ref{fig:exemple2},  item 1 in Theorem~\ref{thm.main} only  implies that when $h\to 0$ and for all $x\in \ft C_{\ft{max}}=(z_1,z)$,  $\mathbb P_x[X_{\tau_\Omega} =z_2]=O(h^{1/4})$ whereas item 1 in Theorem~\ref{thm.4}  implies that  there exists $c>0$ such that $\mathbb P_x[X_{\tau_\Omega} =z_2]=O(e^{-\frac ch})$.
\begin{figure}[h!]
\begin{center}
\begin{tikzpicture}[scale=0.8]
\coordinate (b1) at (-0.66,4);
\coordinate (b2) at (1,-0.04);
\coordinate (b21) at (2.8,3.9);
\coordinate (b22) at (3.5,2.2);
\coordinate (b3) at (5,5.8);
\coordinate (b4) at (6.8,2);
 \draw (b4) ..controls (7.4,1.8)   .. (8,4) ;
\draw [black!100, in=150, out=-10, tension=10.1]
  (b1)[out=-20]   to  (b2) to (b21) to (b22) to (b3) to (b4);
  \draw [dashed,-]   (-1,4) -- (8.5,4) ;
   \draw [dashed,-]   (-1,-0.1) -- (8.5,-0.1) ;
   \draw (-3.4,-0.1) node[]{$ \{f=\min_{\overline\Omega} f\}$};
   \draw [thick, densely dashed,<->]   (-0.4,4.2) -- (2.33,4.2) ;
   \draw [thick, densely dashed,<->]   (2.76,4.2) -- (4.15,4.2) ;
     \draw (0.7,4.6) node[]{$ \ft C_{\ft{max}}$};
          \draw (3.4,4.6) node[]{$ \ft C_{2}$};
     \draw [thick, densely dashed,<->]   (6.1,4.2) -- (7.8,4.2) ;
      \draw (7,4.6) node[]{$ \ft C_{3}$};
 
    \tikzstyle{vertex}=[draw,circle,fill=black,minimum size=4pt,inner sep=0pt]
 \draw (b1)  node[vertex,label=south: {$z_1$}](v){}; 
 \draw (2.6,4) node[vertex,label=north: {$z$}](v){};
\draw (1.2,-0.09) node[vertex,label=south: {$x_1$}](v){}; 
\draw (8,4)  node[vertex,label=north: {$z_2$}](v){}; 
\draw (7.2,1.9)  node[vertex,label=south: {$x_3$}](v){}; 
\draw (3.7,2.16) node[vertex,label=south: {$x_2$}](v){}; 
    \end{tikzpicture}
\caption{A one dimensional case when \eqref{H-M},~\eqref{eq.hip1},~\eqref{eq.hip2}, and~\eqref{eq.hip3} are satisfied with $\mathcal C=\{\ft C_{\ft{max}}, \ft C_2, \ft C_3\}$. The assumption~\eqref{eq.hip4} is not satisfied because $\pa \ft C_{\ft{max}}\cap \pa \ft C_2=\{z\}\neq \emptyset$.   
 } 
 \label{fig:exemple2}
 \end{center}
\end{figure}

\section{Conclusion and perspectives}\label{sec:conc}


In conclusion, the objective of this work was to identify the exit points from $\Omega$ of $(X_t)_{t \ge 0}$ solution to~\eqref{eq.langevin} in the regime $h \to 0$, when $X_0=x \in \Omega$. 

Under assumption~\eqref{H-M}, what is expected is the following: (i) let us consider the dynamics $\dot{y} = -\nabla f(y)$ with initial condition $y(0)=x$, and let us assume that $y(t)_{t \ge 0}$ remains in $\Omega$ and reaches a local minimum $x^*$ (recall that if $y(t)$ leaves $\Omega$ at a finite time, say at point $y^*$, then $y^*$ will be the exit point in the small temperature regime, see Remark~\ref{re.txfini}); (ii) let us then consider  $\lambda^*= \sup \{ 
\lambda> f(x^*)$ s.t. the connected component of $\{f < \lambda\}$ which contains  $x^*$ does not intersect $\partial \Omega\}$ and the associated connected component $\ft C^*$ defined as  the connected component of $\{f < \lambda^*\}$ which contains $x^*$  (notice that $\ft C^* = \ft C(x^*)$, as defined by~\eqref{eq.Cdef2}). Let us assume that $\partial \ft C^* \cap \partial \Omega \neq \emptyset$. Then, one expects that with probability one, $X_t$ leaves $\Omega$ through $\partial \ft C^* \cap \partial \Omega$ (in other words, the law of the first exit point concentrates on $\partial \ft C^* \cap \partial \Omega$, according to Definition~\ref{de.concentration}).

 What we have proven in this work is that this property can indeed be proven (with explicit relative likelihoods of the different exit points in $\partial \ft C^* \cap \partial \Omega$)  if  
 $\argmin\{f(x), x \in \ft C^*_{ext}\} =  \argmin\{ f(x), x \in \ft C^*\}$, and $f$ is constant (equal to $\lambda^*$) over the boundary of $\ft C^*_{ext}$ where $\ft C^*_{ext}$ is the connected component of $\{f \le \lambda^*\}$ which contains $x^*$. Indeed, 
 this is exactly what is needed to apply Theorem~\ref{thm.4} with
 $\lambda=\lambda^*$, $\ft C^*=\ft C_1$, and $\ft C^*_{ext} =
 \cup_{j=1}^m \overline{\ft  C_j}$. Notice in particular that the result holds if  $\overline{\ft C^*}$ is a connected component of $\{f \le \lambda^*\}$. 

 In future works, we intend to study the same question while relaxing the assumption~\eqref{H-M}, by allowing $f$ to have critical points on the boundary, see~\cite{pcbord,pcbord2} for  preliminary works in that direction. 
This should also allow us to go beyond
some of the restrictions   above.

\section*{Acknowledgements}
This work is supported by the
European Research Council under the European Union's Seventh Framework
Programme (FP/2007-2013) / ERC Grant Agreement number 614492.
 The authors thank Fran\c cois Laudenbach for enlightening discussions
 on Sard's theorem. The authors also thank the anonymous referee for
 useful suggestions.

\bibliography{biblio_schuss} 
\bibliographystyle{plain}

\end{document}